\definecolor{darkred}{rgb}{0.82,0.15,0.20}
\definecolor{lightblue}{rgb}{0.22,0.45,0.70}
\definecolor{cgray}{rgb}{0.7,0.7,0.7}
\numberwithin{equation}{section}
\numberwithin{figure}{section}
\numberwithin{table}{section}
\numberwithin{lemma}{section}
\numberwithin{corollary}{section}
\numberwithin{theorem}{section}
\numberwithin{remark}{section}
\newcommand\bb{\boldsymbol{b}}
\newcommand\bu{\boldsymbol{u}}
\newcommand\bv{\boldsymbol{v}}
\newcommand\bw{\boldsymbol{w}}
\newcommand\bx{\boldsymbol{x}}
\newcommand\nn{\boldsymbol{n}}
\newcommand\bt{\boldsymbol{t}}
\newcommand\beps{\boldsymbol{\epsilon}}
\newcommand\bnabla{\boldsymbol{\nabla}}
\newcommand\bkappa{\boldsymbol{\kappa}}
\newcommand\bI{\mathbf{I}}
\newcommand\bC{\mathbf{C}}
\newcommand\bD{\mathbf{D}}
\newcommand\bF{\mathbf{F}}
\newcommand\bP{\mathbf{P}}
\newcommand\bR{\mathbb{R}}
\newcommand\bS{\mathbf{S}}
\newcommand\bH{\mathbf{H}}
\newcommand\bV{\mathbf{V}}
\newcommand\rQ{\mathrm{Q}}
\newcommand\rW{\mathrm{W}}
\newcommand\rH{\mathrm{H}}
\newcommand\rL{\mathrm{L}}
\newcommand\cT{\mathcal{T}}
\newcommand\cA{\mathcal{A}}
\newcommand\cB{\mathcal{B}}
\newcommand\cC{\mathcal{C}}
\newcommand\cF{\mathcal{F}}
\newcommand\cR{\mathcal{R}}
\newcommand\bcR{\boldsymbol{\mathcal{R}}}
\newcommand{\bigchi}{\mbox{\large$\chi$}}
\renewenvironment{proof}{\noindent{\it Proof.}}{\hfill$\square$}
\newcommand\fo{{\boldsymbol{f}_{\!0}}}
\newcommand\so{{\boldsymbol{s}_0}}
\newcommand\bdiv{\mathop{\mathbf{div}}\nolimits}
\newcommand\bDiv{\mathop{\mathbf{Div}}\nolimits}
\newcommand\vDiv{\mathop{\mathrm{Div}}\nolimits}
\newcommand\vdiv{\mathop{\mathrm{div}}\nolimits}
\newcommand\tr{\mathop{\mathrm{tr}}\nolimits}
\newcommand\cero{\boldsymbol{0}}
\journalname{Journal of Scientific Computing}
\begin{document}

\title{{Finite element} methods for large-strain poroelasticity/chemotaxis models simulating the formation of myocardial oedema}
\titlerunning{Poro-hyperelasticity/chemotaxis for myocardial oedema}

\author{N.A. Barnafi \and 
B. G\'omez-Vargas \and
W. J. Louren\c{c}o \and
R. F. Reis \and 
B. M. Rocha \and
M. Lobosco \and 
R. Ruiz-Baier \and
R. Weber dos Santos}

\authorrunning{Barnafi \textit{et al.}}

\institute{
Nicol\'as A. Barnafi \at Department of Mathematics ``Federigo Enriques'', 
Universit\`a degli Studi di Milano, Via Saldini 50, 
20133 Milano, Italy.\\
\email{nicolas.barnafi@unimi.it}.
\and
Bryan G\'omez-Vargas \at
Secci\'on de Matem\' atica,
Sede de Occidente, Universidad de Costa Rica, San Ram\'on
de Alajuela, Costa Rica.\\ 
\email{bryan.gomezvargas@ucr.ac.cr}.
\and 
Wesley de Jesus Louren\c{c}o, Ruy Freitas Reis, Bernardo Martins Rocha, Marcelo Lobosco, Rodrigo Weber dos Santos \at
Graduate Program on Computational Modeling, Federal University of Juiz de Fora, Jos\'e Louren\c{c}o Kelmer - Martelos, Juiz de Fora, Minas Gerais, Brazil.\\ 
\email{wesleydejesuspearl@ice.ufjf.br, ruyfreitas@ice.ufjf.br, bernardomartinsrocha@ice.ufjf.br, marcelo.lobosco@ice.ufjf.br, rodrigo.weber@ufjf.edu.br}.
\and 
Ricardo Ruiz-Baier (corresponding author) \at 
School of Mathematical Sciences and Victorian Heart Institute, Monash University, 9 Rainforest Walk, Melbourne 3800 VIC, Australia; and 
 World-Class Research Center ``Digital biodesign and personalized healthcare", Sechenov First Moscow State Medical University, Moscow, Russia; 
 and Universidad Adventista de Chile, Casilla 7-D, Chill\'an, Chile. \\
  \email{ricardo.ruizbaier@monash.edu}.
  }

\date{\today}
\maketitle

\begin{abstract}
In this paper we propose a novel coupled poroelasticity-diffusion model for the formation of extracellular oedema and infectious myocarditis valid in large deformations, manifested as an interaction between interstitial flow and the immune-dri\-ven dynamics between leukocytes and pathogens. The governing partial differential equations are formulated in terms of skeleton displacement, fluid pressure, Lagrangian porosity, and the concentrations of pathogens and leukocytes. A {five-field 
finite element} scheme is proposed for the numerical approximation of the problem, and we provide the stability analysis for a simplified system emanating from linearisation. We also discuss the construction of an adequate, Schur complement based,  nested preconditioner. The pro\-du\-ced computational tests exemplify the properties of the new model and of the {finite element} schemes.
\end{abstract}

\keywords{Poroelasticity \and reaction-diffusion \and finite-strain regime \and cardiac applications \and oedema formation \and finite element discretisation.}

\subclass{92C10 \and 65M60 \and 74L15 \and 35K57.}


\section{Introduction}
Poroelastic structures are found in many applications of industrial and scientific relevance. Examples include the interaction between soft permeable tissue and blood flow, or the study of biofilm growth and distribution near fluids (see \cite{ehret17,showalter05}). Recent applications of poroelastic consolidation theory to the poromechanical characterisation of soft living tissues include oxygen diffusivity in cartilage \cite{mauk03},  swelling of hydrogels \cite{yu19}, feather and scale development \cite{deoliveira20b}, tumour localisation and biomass growth \cite{sacco17}, cardiac perfusion \cite{chapelle10, cookson2012novel,barnafi2021multiscale}, chemically-controlled cell motion \cite{moee13}, lung characterisation \cite{berger16}, traumatic brain injury \cite{deoliveira20}, the formation of inflammatory oedema in the context of blood-brain barrier failure \cite{lang16}, and immune systems for small intestine \cite{young12,thompson19} as well as for myocarditis \cite{reis19b}.

In this work we are concerned with the latter application involving the formation and evolution of oedema, a build up of excess of fluid content in the intercellular space, in myocardial tissue. Local infection of tissue by pathogens contribute to an inflammatory reaction driven by the immune system with the aim of protecting the compromised region against invading organisms. Driven by oncotic and hydrostatic pressure gradients, there is a transvascular water flux: capillaries transport fluid into and out from the interstitium (the intercellular region), and lymphatic nodes contribute to returning fluid out from that space. {This work also considers that, before infection occurs, the body is under osmotic equilibrium since the capillary barrier is permeable to ions. In this case, ions have no significant effect on the interstitial flow under normal conditions, otherwise, a previous disease may exist. The presence of pathogens inside a cell as well as tissue damage caused by them may change this scenario, i.e., ion-concentration may change, impacting local pressure. However, in this work, we assume, for simplification purposes, that the main cause of oedema formation is the presence of the pathogen in the tissue and the immune response to it.} In the process of inflammatory response, local vasodilation occurs, which not only allows leukocytes to leave the bloodstream and to access the infection site but also increases the accumulation of fluid in the intercellular space, see Figure~\ref{fig:oedema}. However such abnormal accumulation of interstitial fluid content may lead to local stress generation, tissue deformation and swelling, eventually producing a number of complications that can impair the normal function of the tissue. Oedema may be found in several types of tissue. Heart oedemas are  commonly observed in myocarditis and they constitute one of the main criteria used in medical practice to its diagnosis~\cite{friedrich2009, puntmann2018}. Deriving refined continuum-based models for cardiac oedema formation valid in the large strain regime and that include 3D personalised geometries have the potential to describe mechanistic processes that are observable by clinical evidence and to provide deeper insight into the complex multiscale mechanisms that are inherent to the impairing of the healthy cardiovascular function. 

We extend the preliminary results from \cite{reis19,reis19b} and develop a simplified phenomenological model for the dynamic interaction between poroelastic finite strain deformations and the chemotaxis of leukocytes towards pathogens. The present framework assumes that the porous skeleton admits heterogeneities at the macro scale, and that the pores are all interconnected. We also consider that inertial forces are negligible. The solid-fluid interaction describing the distribution of flow within the deformable skeleton is then cast in Lagrangian coordinates using the solid displacement, the fluid pressure, and the nominal porosity (ratio between the pore volume and the total volume), whereas the immune system dynamics are represented by the concentrations of leukocytes and a pathogen. The model for the immune system has also been modified to better represent the dynamics that occur after the pathogen has been eliminated. Furthermore, the mechanical feedback into pathogens and leukocyte populations are now taken into account, {and we have explored these aspects further in our recent work \cite{lourenco22}, which uses the present formulation for the coupled poromechanical/chemotaxis equations}. 

Even if many numerical methods for the approximation of linear poroelasticity and their convergence analysis can be found in the recent literature (see \cite{Mikelic2013convergence,Both2017,both20,barnafi20,Both2019} and the references therein), much less attention has been given to formulations addressing large-strain poromechanics. In this regard, recent works include an enriched Galerkin framework \cite{choo19}, stabilised finite elements \cite{berger17,zheng20} and hybrid finite elements as well \cite{yu19}. Although most contributions address the problem in a Lagrangian frame of reference, some alternative formulations include descriptions in Eulerian \cite{rohan17} and ALE \cite{burtschell2017} coordinates. Besides the stress response, nonlinear models differ from the linear ones in that pressure is not a primary variable, but instead it is given by constitutive modelling \cite{Chapelle201482,Coussy2004}. Instead, we follow the framework of \cite{macminn16,cookson2012novel} (and other similar models) where the issue is circumvented through solid phase incompressibility, which yields a Lagrange multiplier acting as hydrostatic pressure.

The extension of such solution techniques for large scale simulations is highly non-trivial due to the requirement of efficient preconditioned iterative methods for the poroelasticity problem. Preconditioning strategies are mainly given by operator preconditioning \cite{baerland2017weakly,hong2019conservative} and block factorisations \cite{White201655,frigo2019relaxed}. In this regard, block preconditioners are more easily generalisable, but an efficient implementation requires an adequate approximation of the Schur complement, usually approximated through a fixed-stress formulation in poroelasticity \cite{franceschini2021approximate, frigo2019relaxed,White201655}. Despite all of this, an efficient numerical solver for a linearisation of large-strain poromechanics with nominal porosity as a primary variable remains largely unaddressed.

In the present case we formulate the set of nonlinear equations using as primary variables the solid displacement, the fluid pressure, the Lagrangian porosity, and the concentrations of leukocytes and pathogens. A 
{finite} element method is employed for the space discretisation \cite{ruiz15}, combined with a first-order backward Euler time advancing scheme. The method uses a MINI element for the displacement-porosity pair \cite{Arnold1984} (see also \cite{Auricchio2005,Chamberland2010} for the case of hyperelasticity), and linear Lagrangian elements for the remaining unknowns.  As the stability of the fully nonlinear system is still a paramount task (and an open problem even for the segregated nonlinear poroelasticity without the reaction-diffusion coupling, see, e.g., \cite{barnafi20}), we address the solvability of the linearised set of equations, focusing on the semidiscrete in-time formulation. For the corresponding analysis, we propose a fixed-point strategy, and then, Schauder fixed point theorem, together with the Fredholm alternative, the Babu\v ska-Brezzi theory, the Lax-Milgram lemma, and classical theory of quasi-linear equations, allow us to assert unique solvability robustly with respect to the material parameters of the linearised poroelastic solid. Then, we address the numerical approximation of the linearised problem through a Krylov subspace method with a Schur complement preconditioner.

The main advantages of the proposed mathematical model and the associated computational methods are i) a consistent theoretical framework valid for finite strains that stems as the natural generalisation of three-field linear poroelasticity from \cite{oyarzua16} (which is written in terms of displacement, fluid pressure, and \emph{total pressure}), ii) the versatility of the formulation to accommodate 2D or 3D geometries, iii) the accuracy of the numerical scheme and efficiency of the Krylov subspace iterative methods under consideration, and iv) the potential of the methodology in replacing current invasive methods for the detection of interstitial fibrosis and myocarditis (such as endo-myocardial biopsy) by techniques hinging only on MRI data.  

We have organised the contents of this paper in the following manner. Section~\ref{sec:model} outlines the main details of the model problem, motivating each component in the balance equations and stating main assumptions together with typical boundary conditions. In Section~\ref{sec:weak} we present the weak formulation of the nonlinear coupled problem, as well as the derivation of the linearised form of the weak formulation. Then, in Section~\ref{sec:stability}, we address the solvability of the linearised set of equations. Section~\ref{sec:fem} contains the derivation of {a} finite element scheme, including the fully discrete problem in matrix form, and we address the construction  of efficient solvers tailored for the multiphysics coupled system. In Section~\ref{sec:results} we present a number of computational results, consisting in a simple numerical study concerning the sensitivity analysis of relevant model parameters, the verification of spatio-temporal convergence, and an investigation of different cases on simplified and physiologically  accurate geometries. We close with some remarks and a discussion on model extensions collected in Section~\ref{sec:concl}. 

\section{Continuum model and proposed set of field equations}\label{sec:model}
The different scales and structural components in the tissue suggest to employ a continuum model. We assume that the tissue is a poroelastic medium with interstitial fluid completely filling the void spaces. Adopting usual kinematic conventions and notation, let us consider a domain $\Omega\subset \mathbb{R}^d$, $d=2,3$ representing the volume occupied by a deformable porous structure in its reference configuration. We will denote by $\nn$ the outward unit normal vector on the boundary $\partial\Omega$. We also disjointly partition the boundary into the sub-boundaries $\Gamma$ and $\Sigma$ (also in the reference undeformed state) where different types of boundary conditions, associated with the equations of motion, will be applied. A material particle in the undeformed domain $\Omega$ is identified by its position $\bx$ (and, unless specified otherwise, all differential operators such as gradients and divergences will be evaluated with respect to these undeformed coordinates), whereas $\bu:\Omega\to\bR^d$ will denote the displacement field defining its new position $\bx+\bu$ in the deformed configuration.  The tensor $\bF:=\bI+{\bnabla}\bu$ is the gradient (applied with respect to the fixed material coordinates) of the deformation map and $\bI$ denotes the identity tensor; its Jacobian determinant, denoted by $J=\det\bF = \det(\bI + {\bnabla}\bu)$, measures the dilation (solid volume change during the deformation); and $\bC=\bF^{\tt t}\bF$ is the right Cauchy-Green deformation tensor, where the superscript $()^{\tt t}$ denotes the transpose operator.

The non-viscous filtration flow through the non-deformed porous skeleton can be described by Darcy's law \cite{alves16,alves2019}. Adopting its parabolic formulation solely in terms of pressure head (which condenses momentum and mass conservation equations for the fluid) and combining it  with the conservation of linear momentum in the solid, we obtain the extension of Biot consolidation equations to the regime of large strains. This can be carried out using the descriptions already available in, e.g.,  \cite{macminn16,berger17,zheng20}. In this paper we choose to write them using the  first {Piola--Kirchhoff} stress tensor $\bP = \bP_{\mathrm{eff}}-\alpha pJ\bF^{-\tt t}$ and the effective (hyperelastic)  stress 
$\bP_{\mathrm{eff}} = \frac{\partial\Psi_s}{\partial\bF}$. 
For incompressible neo-Hookean and {Holzapfel--Ogden} solids one has the following forms (see, e.g., \cite{holzapfel09,quarteroni17}) 
\begin{align}
\nonumber\Psi_s^{\mathrm{NH}}(\bC) & = \frac{\mu_s}{2}(I_1(\bC)-d),\\ 
\Psi_s^{\mathrm{HO}}(\bC)  & = \frac{a}{2b}e^{b(I_{1}(\bC)-d)}
+\dfrac{a_{fs}}{2b_{fs}}\bigl[e^{b_{fs}I_{8,fs}(\bC)^{2}}-1\bigr]+ 
\sum_{i\in\{ f,s\} } \dfrac{a_{i}}{2b_{i}}\bigl[e^{b_{i}(I_{4,i}(\bC)-1)_+^{2}}-1\bigr],\label{eq:HO}\end{align}
respectively, where $\mu_s$, $a,b,a_i,b_i$ with $i\in\{f,s,fs\}$ are material parameters and we have used the notation $(\zeta)_+:= \zeta$ if $\zeta>0$ or  zero otherwise, and the invariants and pseudo-invariants of $\bC$ measuring isotropic and direction-specific stretches $I_{1}(\bC)=\tr\bC$, $I_{4,f}(\bC)=\fo\cdot(\bC\fo)$, $I_{4,s}(\bC)=\so\cdot(\bC\so)$, $I_{8,fs} (\bC)=\fo\cdot(\bC\so)$.  Here $\fo$ and $\so$ denote local alignment of fibres and sheetlets which is inherent to the local microstructure of the myocardial tissue \cite{nash00}. Histological studies based on diffusion tensor imaging indicate a fibre architecture that rotates 120$^\circ$ from the epicardial to the endocardial surface. A volumetric part of the energy is added for the case of nearly incompressible materials, for example,  the term $U(J) = \frac{1}{2} \lambda_s (\mathrm{ln} J)^2$ (with $\lambda_s$ another material parameter) which is designed to enforce convexity or polyconvexity of the strain density \cite{zheng19}. 

Provided that the volumetric stiffness of the solid is substantially smaller than that of the constituent (which is reasonable for soft living tissues, see, e.g., \cite{mauk03,lang16}), volume changes of solid constituent are negligible, implying that
\[J = 1 + \phi_f - \phi_0,\]
where $\phi_f - \phi_0$ is the nominal porosity change in the reference configuration \cite{macminn16} (here we are assuming that the mixture is saturated, that is, $\phi_f + \phi_s = 1$, where $\phi_s$ is the volume fraction of the solid). Recall that the Lagrangian porosity is the natural state variable being work-conjugate to the fluid pressure $p$ (see, e.g., \cite{nedjar13}). Should pre-stress be considered, we need to take instead $J = J_0 + \phi_f - \phi_0$, where $J_0$ is the volume associated with the pre-stress. Note that the specific form of the strain energy density $\Psi_s$ is considered irrespective of the fluid saturating the solid. It can also be derived from subtracting the volumetric free energy of the fluid from the total Helmholtz free energy of the mixture $\Psi_s = \Psi - \phi_f \Psi_f$, where $\phi_f$ is  the nominal (Lagrangian) porosity measuring the current fluid volume per unit reference total volume \cite{zheng20,nedjar13}. Note also that rearrangement of tissue components as a result of change in fluid content will imply an additional solid deformation as well as a stress modification \cite{lang16,choo19,phipps11}. 
  
The problem is then  stated in a Lagrangian framework and in terms of the solid displacement $\bu$, the fluid pressure $p$, and the nominal porosity $\phi_f$, as 
\begin{subequations}
\label{eq:poroe}
\begin{align}
\label{eq:u}
-\bDiv[\bP_{\mathrm{eff}} - \alpha pJ\bF^{-\tt t}]& = \rho_s\bb & \text{in $\Omega\times(0,t_{\text{final}}]$},\\
\label{eq:p}
\rho_f D_t \phi_f - \frac{1}{J}\vDiv \biggl({\phi_f\rho_f} J\bF^{-1}\frac{\bkappa(J,\phi_f)}{\mu_f}\bF^{-\tt t} \nabla p \biggr) & = \rho_f\ell & \text{in $\Omega\times(0,t_{\text{final}}]$},\\
  \label{eq:phi}
  J -\phi_f & = 1 - \phi_0  & \text{in $\Omega\times(0,t_{\text{final}}]$},
\end{align}\end{subequations}
{where $D_t$ stands for the material time derivative, and $\vDiv$ and $\bDiv$ are the divergence operators (with respect to the coordinates in the reference configuration)  for vector and tensor fields, respectively}. 
Equation \eqref{eq:u} is the balance of linear momentum for the multiphase system expressed in terms of the true total stress; \eqref{eq:p} states the mass conservation (or continuity) of fluid content in a material framework, written in terms of the nominal flux that includes the movement of the fluid phase due to pressure gradient under Darcy's law (which in particular discards shear flow effects and is valid for small Reynolds numbers); and \eqref{eq:phi} represents the aforementioned kinematic  constraint relating volume and porosity. {It states 
the material incompressibility of the constituents, but it still allows for compression of the medium since the pore volume can change locally in connection with modifications in the macroscopic mass density of the poroelastic material. A more complete discussion can be found in \cite{macminn16}.}
 This description is independent of the material constitutive model for the solid matrix, and  the balance of angular momentum simply furnishes the condition $\bF\bP^{\tt t} = \bP\bF^{\tt t}$. Here $D_t$ indicates the material time derivative (with respect to skeleton particles), $\bb$ is a vector of external body loads, $\ell$ is a distributed source/sink strength term (local source volume of fluid added per volume of tissue per unit time) that accounts for fluid being added or removed from the interstitium by the blood and lymph capillaries and its specific form will be made precise later on, $\rho_s$ is the reference density of the porous matrix, $\rho_f$ is the reference density of the interstitial fluid, $\alpha$ is the {Biot--Willis modulus assuming values between 0 and 1}, and $\mu_f$ is the dynamic viscosity of the interstitial liquid. 

The porosity-dependent heterogeneous tensor of intrinsic permeability (or hydraulic conductivity) $\bkappa$ can be defined by using normalised {Kozeny--Carman} (KC), exponential (E), or by power-law (PL) relations defined, respectively, as
\begin{gather*} \bkappa^{\mathrm{KC}}_{\mathrm{iso}}(J,\phi_f) = \kappa_0\bI+\frac{\kappa_0}{\phi_0^3} (1-\phi_0)^2J\phi_f^3(J-\phi_f)^2\bI, \\ \bkappa^{\mathrm{E}}_{\mathrm{iso}}(J,\phi_f) = \kappa_0 \biggl(\frac{J-\phi_f^3}{1-\phi_f^3}\biggr)^3\exp(M_0(J^2-1)/2)\bI,\quad 
  \bkappa^{\mathrm{PL}}_{\mathrm{iso}}(J,\phi_f) = \kappa_0 \biggl(\frac{J\phi_f}{\phi_0}\biggr)^{2/3}\bI,
  \end{gather*}
where $\kappa_0 = d_m^2\phi_0^3/180(1-\phi_0)^2$, $d_m$ is the typical  diameter of pores or of solid grains, and $M_0$ is the slope of the normalised permeability curve \cite{zheng20,ateshian10,berger16}. As porosity is a volumetric quantity, these forms of permeability are  isotropic and therefore indifferent to rotation and shear effects. A relatively simple phenomenological extension that accommodates anisotropy due to tissue microstructure is given as follows (using as example the {Kozeny--Carman} law) 
\begin{equation}\label{eq:kappa}
  \bkappa^{\mathrm{KC}}(\phi_f) = \kappa_0\bigl(1+\frac{1}{\phi_0^3} (1-\phi_0)^2J\phi_f^3(J-\phi_f)^2\bigr) \biggl(\frac{\kappa_{\fo}}{I_{4,f}} \bF\fo\otimes \bF\fo + \frac{\kappa_{\so}}{I_{4,s}} \bF\so\otimes \bF\so\biggr),
  \end{equation}
where $\kappa_{\fo},\kappa_{\so}$ are weights for the principal hydraulic conductivities such that  the ratio $\kappa_{\fo}/\kappa_{\so}\approx 2.5$ coincides with the anisotropy ratio typically used for the conductivity tensor in the extracellular domain of bidomain models \cite{colli14,sundnes07} (and much milder than the ratio observed in the intracellular domain, or in other porous skeletons such as sand or cartilage \cite{burger97,federico07}). This allows to reproduce the preferential perfusion direction of the micro-vascular system following the orientation of cardiac fibres $\fo,\so$. Other transformations that go beyond the kinematic nonlinearity induced by change of frame can be found in models exploiting strain-dependence \cite{ateshian10} or stress-assistance mechanisms \cite{cherubini17}. Unless otherwise specified, we will use \eqref{eq:kappa}. 

\begin{figure}
    \centering
    \includegraphics[width=.65\textwidth]{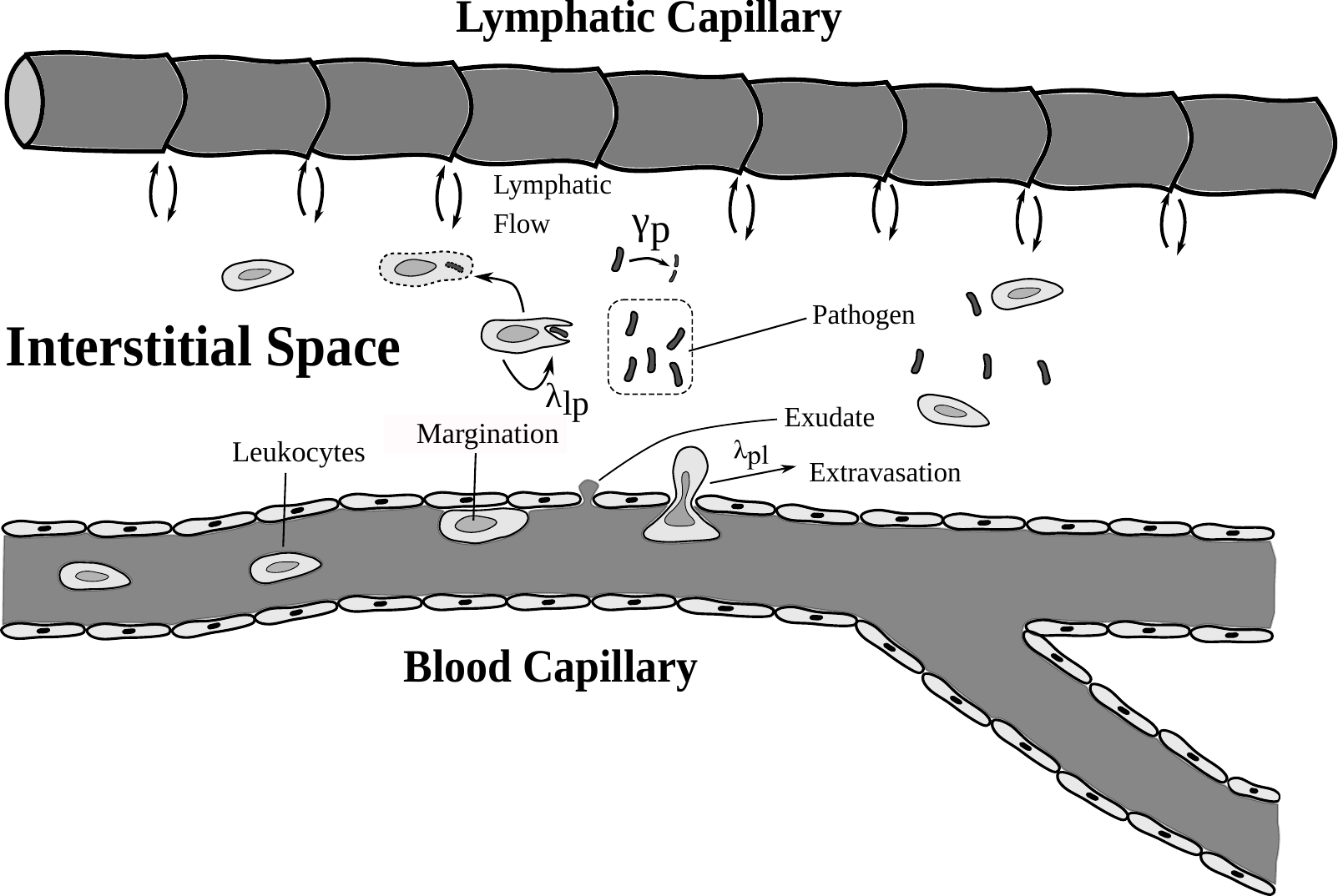}
    \caption{Sketch of local infection of tissue by pathogens, triggering inflammatory and immune responses. In the inflammatory response process, local increment vascular permeability occurs, which allows leukocytes to leave the bloodstream to access the infection site which also leads to fluid accumulation in the interstitial space. Driven by oncotic and hydrostatic pressure gradients, blood capillaries transport fluid into and out of the interstitium, and lymphatic capillaries contribute to the resolution of the oedema.}\label{fig:oedema}
\end{figure}

The infection process starts when an invader (pathogen) enters the poroelastic tissue. Once there, it starts to propagate. The exact mechanism adopted for increasing the pathogen population (replication, in the case of viruses, or reproduction, otherwise) is not essential to our model. In our simplification, $\gamma_p$ represents the rate at which the pathogen propagates. The innate immune cells (leukocytes) use their receptors to sense the presence of non-self molecules characteristic of pathogens. After the presence of pathogens is identified, leukocytes start to produce pro-inflammatory cytokines, which increase vascular permeability and recruit more leukocytes to the site of infection. The model does not include cytokines because the concentration of pathogens and leukocytes can indirectly represent their effect on vascular permeability and recruitment of leukocytes. {Additionally, we assumed a simplified reaction term for the leukocytes since among these cells some have a very long life span when compared to the timescale of the simulations considered in this work. For example, macrophages can live from months to years\cite{vanFurth1968}. Evidently, leukocyte death from apoptosis and pathogen phagocytosis create a sink that was not considered in the current work.} Leukocytes adhere to the endothelial cells that line the vessels, a phenomenon called margination (Figure~\ref{fig:oedema}). The increase in vascular permeability facilitates leukocyte extravasation to the tissue. The model considers that leukocytes leave the bloodstream to the tissue with an extravasation rate equals to $\lambda_{pl}$. The leukocytes move towards the site of infection due to the presence of chemoattractants. The chemotactic rate is given by $\chi_{cl}$. Finally, leukocytes can start the phagocytosis of pathogens, eliminating them at a rate equal to $\lambda_{lp}$ (Figure~\ref{fig:oedema}). We denote by $c_p$ and $c_l$ the concentrations of pathogens and leukocytes, respectively. Both populations can also diffuse in the interstitial fluid. Their dynamics, in the current configuration and with derivatives in terms of the deformed coordinates, are governed by 
\begin{align*}
\partial_t (\phi_fc_p) + (\partial_t \bu  - \frac{\bkappa}{\mu_f} \nabla p)\cdot \nabla c_p 
- \vdiv(\bD_p \nabla c_p) & = r_p(\phi_f,c_p,c_l) \ \text{in $\Omega_t\!\times\!(0,t_{\text{final}}]$},\\
\partial_t (\phi_fc_l) + (\partial_t \bu  - \frac{\bkappa}{\mu_f} \nabla p)\cdot \nabla c_l 
- \vdiv(\bD_l \nabla c_l - \chi c_l\nabla c_p) & = r_l(\phi_f,c_p,c_l) \ \text{in $\Omega_t\!\times\!(0,t_{\text{final}}]$},
\end{align*}
where $\bD_l,\bD_p$ are positive-definite diffusion tensors for the species within the interstitial fluid,  and $r_l,r_p$ are reaction terms adopting the following specific forms (see, e.g., \cite{reis19b}) 
\begin{gather*}
r_p(\phi_f,c_p,c_l) = \phi_f(\bar{\gamma_p} - \bar{\lambda}_{lp}{c_l})c_p, \quad
r_l(\phi_f,c_p,c_l) = \bar{\lambda}_{pl}{\phi_f}c_pc_l,
\end{gather*}  
with $\bar{\gamma}_p,\,\bar{\lambda}_{lp}$, and $\bar{\lambda}_{pl}$ being constant coefficients (pathogen growth rate, leukocyte phagocytosis rate, and leukocyte migration rate, respectively) \cite{reis19b}. Note that the advecting velocity corresponds to the filtration velocity (that between the solid and fluid velocities, as the species are supposed to be transported in the interstitial fluid). Pulling back these advection-reaction-diffusion equations to the reference frame we obtain a modified system with kinematic nonlinearity 
\begin{subequations}
\label{eq:immune}
\begin{align}
\label{eq:cp}
D_t (\phi_fc_p) - \frac{1}{J}\vDiv({\phi_f}J\bF^{-1}\bD_p\bF^{-\tt t} \nabla c_p) & =  r_p(\phi_f,c_p,c_l) \quad \text{in $\Omega\times\!(0,t_{\text{final}}]$},\\
  \label{eq:cl}
  D_t(\phi_f c_l) - \frac{1}{J}\!\vDiv({\phi_f}J\bF^{-1}\bD_l\bF^{-\tt t} \nabla c_l- \chi {\phi_f} c_l J\bF^{-1}\bF^{-\tt t}\nabla c_p ) &= r_l(\phi_f,c_p,c_l) \quad \text{in $\Omega\times\!(0,t_{\text{final}}]$}.\end{align}\end{subequations}
Note that, in order to make the advection completely absorbed by the total derivative we have considered the total fluid-solid velocity. In turn, this implies that we are assuming that pathogens and leukocytes are able to move in the solid also. 

Next we return to the continuity equation \eqref{eq:p} to specify the source (rate of fluid movement) $\ell$. This term will encode the immune response feedback into the hydromechanics by setting a Starling-Hill function of the type
\[ \ell(p,c_p) =  c_f(c_p)[p_c-p-\sigma(c_p)(\pi_c-\pi_i)]-\ell_0\biggl[1+\frac{v_{\max}(p-p_0)^n}{k_m^n + (p-p_0)^n}\biggr],\]
specified by the nonlinear functions of the pathogen concentration (capillary conductivity and  reflection coefficient, respectively) 
\[c_f(c_p) =(S/V)L_{p0}(1+L_{bp}c_p), \qquad \sigma(c_p)=\sigma_0(1+L_{br}c_p)^{-1}.\]

The remaining coefficients $\ell_0,p_c,\pi_c,\pi_i,v_{\max},k_m,n,p_0,S/V, L_{p0},L_{bp}, L_{br}$ (normal lymph flow, capillary basal pressure, capillary oncotic pressure, interstitial oncotic pressure due to plasma protein, maximal lymph flow, half-live of pressure to lymph flow, Hill coefficient, normal interstitial fluid pressure, vessel area per volume unit, healthy tissue hydraulic permeability of the capillary wall, intensity of the pathogen infection into permeability, and influence of pathogens in the reflection coefficient; respectively) are positive model constants \cite{reis19b}.

To close the system composed by \eqref{eq:u}-\eqref{eq:phi} and\eqref{eq:cp}-\eqref{eq:cl}, we need to provide suitable initial data and boundary conditions. We suppose that the system is initially at rest and with an homogeneous distribution of the leukocytes and pathogens in the domain
\begin{equation}
\label{eq:initial}
\phi_f(0) = \phi_0,\quad p(0)=0, \quad \bu(0) = \cero, \quad c_l(0) = c_{l,0},\quad c_p(0)=c_{p,0} \quad \text{in $\Omega\times\{0\}$.}
\end{equation}
In addition, on the boundaries  $\Gamma$ and $\Sigma$ we prescribe traction and displacement,  respectively; and zero flux everywhere on the boundary for the species interacting in the immune system. That is, we consider the following set of boundary conditions 
\begin{subequations}
\begin{align}
\label{bc:Gamma}
\bu = \cero, \quad \text{and} \quad {\phi_f} J\bF^{-1}\frac{\bkappa(\phi_f)}{\mu_f}\bF^{-\tt t} \nabla p \cdot\nn &= 0 &\text{on $\Gamma\times(0,t_{\text{final}}]$},\\
\label{bc:Sigma}
\bP\nn = J\bF^{-\tt t} \bt_\Sigma, \quad\text{and}\quad p& =p_{0}  &\text{on $\Sigma\times(0,t_{\text{final}}]$},\\
  {\phi_f} J\bF^{-1}\bD_p\bF^{-\tt t} \nabla c_p \cdot\nn = [ {\phi_f} J\bF^{-1}\bD_l\bF^{-\tt t} \nabla c_l- \chi {\phi_f} c_lJ\bF^{-1}\bF^{-\tt t}\nabla c_p]\cdot\nn & =0  &\text{on $\partial\Omega\times(0,t_{\text{final}}]$},\label{bc:immune}
\end{align}\end{subequations}
where $\bt_\Sigma$ is a given boundary traction applied in the current configuration.  
 
\section{Weak formulation and consistent linearisation}\label{sec:weak}
We derive a weak form for the coupled system, carry out a {Newton--Raphson} linearisation, and then discuss the solvability of a simplified system arising from the linearisation with respect to conditions at rest. That resulting problem coincides in structure with the coupled Biot/reaction-diffusion equations. 

\subsection{General weak form and in-time discretisation}\label{sec:linearisation}
In view of the essential boundary conditions for displacement and fluid pressure \eqref{bc:Gamma},\eqref{bc:Sigma}, we  define the Hilbert spaces 
\begin{gather*}
\bH^1_\Gamma(\Omega) = \{ \bv\in \bH^1(\Omega): \bv|_{\Gamma} = \cero\},\quad 
 \rH^1_\Sigma(\Omega) = \{ q\in \rH^1(\Omega): q|_{\Sigma} = 0\},
\end{gather*}
associated with the classical norms in $\bH^1(\Omega)$, and $\rH^1(\Omega)$, respectively. The nonlinear weak form of the equations of motion and of the transport for the immune system can be established following a standard approach, that is, multiplying each field equation by a suitable test function, integrating over $\Omega$, and invoking the divergence theorem whenever appropriate. To lighten the presentation, we drop the measures $d\bx$ and $d\mathrm{s}$ from the integrals, which are to be understood throughout the manuscript as the Lebesgue measure. This leads to the following continuous, five-field weak   formulation: For almost all $t>0$, find $(c_p,c_l)\in\rH^1(\Omega)^2$, $\bu\in\bH^1_{\Gamma}(\Omega)$, $p\in\rH^1_{\Sigma}(\Omega)$, and $\phi_f\in \rL^2(\Omega)$,  such that 
\begin{align}
  \nonumber
  \int_{\Omega} J D_t(\phi_f c_p) w_p + \int_{\Omega} {\phi_f}J\bF^{-1}\bD_p\bF^{-\tt t}\nabla c_p\cdot \nabla w_p & =
  \int_\Omega J r_p(\phi_f,c_p,c_l)w_p &\forall w_p \in\rH^1(\Omega),\\
  \nonumber
  \int_{\Omega} J D_t(\phi_f c_l) w_l + \int_{\Omega} {\phi_f}J\bF^{-1}(\bD_l\bF^{-\tt t}\nabla c_l
  -  \chi c_l \bF^{-\tt t}\nabla c_p)\cdot \nabla w_l & =
  \int_\Omega J r_l(\phi_f,c_p,c_l)w_l &\forall w_l \in\rH^1(\Omega),\\
  \label{eq:weak}
  \int_{\Omega} (\bP_{\mathrm{eff}} - \alpha p J\bF^{-\tt t}):{\bnabla}\bv - \int_{\Sigma} J\bF^{-\tt t}\bt_\Sigma\cdot\bv & = \int_{\Omega} \rho_s\bb\cdot\bv & \forall \bv \in \bH^1_{\Gamma}(\Omega),\\
  \nonumber
  \int_{\Omega} \rho_fJ D_t\phi_f q +  \int_{\Omega} {\rho_f\phi_f}J\bF^{-1}\frac{\bkappa(\phi_f)}{\mu_f}\bF^{-\tt t}\nabla p \cdot \nabla q & = \int_\Omega \rho_fJ\ell(p,c_p)\,q &\forall q \in\rH^1_{\Sigma}(\Omega),\\
 \nonumber
\int_\Omega (J - \phi_f)\psi  & = \int_\Omega (1-\phi_0)\psi  & \forall \psi\in \rL^2(\Omega).
\end{align}
We do not address time regularity in this work, and simply assume that the solution is weakly differentiable and continuous in time, the latter in order to obtain well-defined initial conditions.
Next we partition the interval $[0,t_{\mathrm{final}}]$ into $N$ evenly spaced non-overlapping sub-intervals of fixed length $\Delta t$ and apply a time semi-discretisation of \eqref{eq:weak} using the unconditionally stable, backward Euler's method. Starting from initial data, at each time iteration $n=0,\ldots, N-1$ we have a nonlinear  problem with solution $(c_p^{n+1},c_l^{n+1})\in\rH^1(\Omega)^2$, $\bu^{n+1}\in\bH^1_{\Gamma}(\Omega)$, $p^{n+1}\in\rH^1_{\Sigma}(\Omega)$, and $\phi_f^{n+1}\in \rL^2(\Omega)$. Then we perform a Newton-Raphson method linearisation. This means that at each time step $n+1$ we start the Newton iterates using as initial guess the solution at the previous time step $c_p^{k=0} = c_p^n$, $c_l^{k=0} = c_l^n$, $\bu^{k=0} = \bu^n$, $p^{k=0} = p^n$,  $\phi_f^{k=0} = \phi_f^n$, and for $k=0,1,\ldots$ we look for the solution increments $(\delta c_p^{k+1},\delta c_l^{k+1})\in\rH^1(\Omega)^2$, $\delta\bu^{k+1}\in\bH^1_{\Gamma}(\Omega)$, $\delta p^{k+1}\in\rH^1_{\Sigma}(\Omega)$, $\delta\phi_f^{k+1}\in \rL^2(\Omega)$ satisfying 
\begin{alignat}{5}
&  a_1(\delta c_p^{n+1},w_p)   &+ b_1^*(\delta c_l^{n+1},w_p) + & b_2^* (\delta \bu^{n+1}, w_p)& &+ b^*_3(\delta \phi_f^{n+1},w_p) &=& F_1(w_p)\ &\forall w_p \in \rH^1(\Omega),\nonumber\\
&  b_1(w_l, \delta c_p^{n+1}) &+ a_2(\delta c_l^{n+1},w_l) + & b^*_4(\delta \bu^{n+1}, w_l)& &+ b_5^*(\delta \phi_f^{n+1},w_l) &=& F_2(w_l) &\forall w_l \in \rH^1(\Omega),\nonumber\\
  \label{eq:weak-linear}
&&&a_3(\delta\bu^{n+1},\bv) &+  c_1^*(\delta p^{n+1},\bv) &  &=& F_3(\bv) &\forall \bv \in\bH^1_{\Gamma}(\Omega),\\
  \nonumber
&c_2(q,\delta c_p^{n+1})&+ & c_1(q,\delta\bu^{n+1}) &+  a_4(\delta p^{n+1},q) &+ c_3^*(\delta \phi_f^{n+1},q)  &= &F_4(q)  &\forall q \in \rH^1_{\Sigma}(\Omega),\\
  \nonumber
& &  &   c_4(\psi,\delta\bu^{n+1}) & &- a_5(\delta\phi_f^{n+1},\psi)  &= &F_5(\psi) & \forall \psi\in \rL^2(\Omega).
\end{alignat}
Then the solution 
is updated 
\begin{gather*}
    c_p^{k+1} = c_p^k + \delta c_p^{k+1},\quad c_l^{k+1} = c_l^k + \delta c_l^{k+1},\quad  \bu^{k+1} = \bu^k + \delta\bu^{k+1}, \\  
    p^{k+1} = p^k + \delta p^{k+1}, \quad  \phi_f^{k+1} = \phi_f^k + \delta \phi_f^{k+1},
\end{gather*}
until either the increments or the residuals drop below a given tolerance. In order to specify these variational forms, as usual we use the directional derivatives of all nonlinearities at a particular trial solution in the direction of 
the increments. After rearranging terms and omitting the superscript $(\cdot)^{n+1}$ whenever clear from the context, the bilinear forms and linear functionals in \eqref{eq:weak-linear} are defined as
 \begin{align}
   a_1(\delta c_p,w_p) & := \int_{\Omega} J^k\biggl(\frac{2\phi_f^k -\phi_f^{n}}{\Delta t} - \phi_f^k \frac{\partial r_p(c_p^k,c_l^k)}{\partial c_p^k} \biggr) \delta c_pw_p + \int_{\Omega} J^k \bF^{k,-1} \bD_p\bF^{k,-\tt t} \nabla\delta c_p \cdot \nabla w_p, \nonumber\\
  b_1^*(\delta c_l,w_p) &:= - \int_{\Omega} J^k  \frac{\partial r_p(\phi_f^k, c_p^k,c_l^k)}{\partial c_l^k}\delta c_l\,w_p,\quad F_1(w_p) := \int_{\Omega} \cR_1^{k,n} w_p, \quad F_2(w_l) := \int_{\Omega} \cR_2^{k,n} w_l,
 \nonumber \\
 b_2^*(\delta\bu,w_p) & :=\int_{\Omega}\! J^k\bF^{k,-1}\bigl([\bF^{k,-\tt t}\!:\!{\bnabla}\delta\bu]\bD_p
\! -\! {\bnabla}\delta\bu \bF^{k,-1} \bD_p - \bD_p \bF^{k,-\tt t} ({\bnabla}\delta\bu)^{\tt t}\bigr) \bF^{k,-\tt t} \nabla c^k_p \cdot \nabla w_p
\nonumber\\
& \qquad + 
 \int_{\Omega} J^k\bF^{k,-\tt t}:{\bnabla}\delta\bu \biggl(\frac{\phi_f^k -\phi_f^{n}}{\Delta t} c_p^k + \frac{c_p^k -c_p^{n}}{\Delta t}\phi_f^k-  r_p(\phi_f^k,c_p^k,c_l^k) \biggr)\,w_p,\nonumber\\
 b_3^*(\delta\phi_f,w_p) &:= \int_{\Omega}\! J^k\biggl(\frac{2c^k_p-c^{n}_p}{\Delta t}  - \frac{\partial r_p(\phi_f^k,c_p^k,c_l^k)}{\phi_f^k} \biggr)\delta\phi_f w_p ,
 \quad F_3(w_l) := \int_{\Omega} \bcR_{3,\Omega}^{k}\cdot \bv + \int_{\Sigma} \bcR_{3,\Sigma}^{k}\cdot \bv,\nonumber \\
   a_2(\delta c_l,w_l) & := \int_{\Omega} J^k\biggl(\frac{2\phi_f^k -\phi_f^{n}}{\Delta t} - \frac{\partial r_l(\phi_f^k,c_p^k,c_l^k)}{\partial c_l^k} \biggr) \delta c_lw_l + \int_{\Omega} J^k \bF^{k,-1} \bD_l\bF^{k,-\tt t} \nabla\delta c_l \cdot \nabla w_l\nonumber\\
   &\qquad - \int_{\Omega} \chi J^k\delta c_l \bF^{k,-1}\bF^{k,-\tt t} \nabla c_p^k\cdot\nabla w_l, \nonumber \\
   b_1(w_l,\delta c_p) &:= - \int_{\Omega} \chi J^k c_l^k\bF^{k,-1}\bF^{k,-\tt t} \nabla \delta c_p\cdot\nabla w_l
   - \int_{\Omega} J^k \frac{\partial r_l(\phi_f^k, c_p^k,c_l^k)}{\partial c_p^k}  \delta c_pw_l, \nonumber \\
   b^*_4(\delta \bu, w_l) & := \int_{\Omega} J^k\bF^{k,-1}\bigl([\bF^{k,-\tt t}:{\bnabla}\delta\bu]\bD_l
 - {\bnabla}\delta\bu \bF^{k,-1} \bD_l - \bD_l \bF^{k,-\tt t} ({\bnabla}\delta\bu)^{\tt t}\bigr) \bF^{k,-\tt t} \nabla c^k_l \cdot \nabla w_l
\nonumber\\
& \quad + \int_{\Omega} J^k\bF^{k,-\tt t}:{\bnabla}\delta\bu \biggl(\frac{\phi_f^k -\phi_f^{n}}{\Delta t} c_l^k + \frac{c_l^k -c_l^{n}}{\Delta t}\phi_f^k-  r_l(\phi_f^k,c_p^k,c_l^k) \biggr)\,w_l\nonumber\\
& \quad + \int_{\Omega} \chi c_l^k J^k \biggl( \bF^{k,-1} {\bnabla}\delta\bu \bF^{k,-1} + \bF^{k,-1} \bF^{k,-\tt t}({\bnabla}\delta\bu)^{\tt t}- \bF^{k,-\tt t}:{\bnabla}\delta\bu \bF^{k,-1}\biggr) \nonumber\\
&\quad \qquad \times  \bF^{k,-\tt t} \nabla c^k_p \cdot \nabla w_l,\nonumber \\
b^*_5(\delta \phi_f, w_l) & := \int_{\Omega} J^k \biggl(\frac{2c^k_l-c^{n}_l}{\Delta t} - \frac{\partial r_l(\phi_f^k,c_p^k,c_l^k)}{\phi_f^k} \biggr) \delta\phi_f w_l, \label{def:forms} \quad c_1^*(\delta p, \bv) = -  \int_{\Omega} \alpha \delta p J^k \bF^{k,-\tt t}:{\bnabla}\bv, \\
a_3(\delta \bu, \bv) & := \int_{\Omega} \biggl(\frac{\partial \bP_{\mathrm{eff}}(\bF^k)}{\partial \bF^k} {\bnabla}\delta\bu
+ \alpha p^k J^k [\bF^{k,-\tt t}:{\bnabla}\delta\bu\,\bI - \bF^{k,-\tt t}({\bnabla}\delta\bu)^{\tt t}]\bF^{k,-\tt t}\biggr):{\bnabla}\bv \nonumber \\
& \qquad -\int_\Sigma J^k [\bF^{k,-\tt t}:{\bnabla}\delta\bu\,\bI - \bF^{k,-\tt t}({\bnabla}\delta\bu)^{\tt t}]\bF^{k,-\tt t}\bt_\Sigma\cdot\bv,\nonumber\\
c_1(q,\delta \bu) & := \int_{\Omega} {\phi_f^k}J^k\bF^{k,-1}\biggl(\bF^{k,-\tt t}:{\bnabla}\delta\bu\frac{\bkappa(\phi_f^k)}{\mu_f} 
- {\bnabla}\delta\bu \bF^{k,-1}\frac{\bkappa(\phi_f^k)}{\mu_f} - \frac{\bkappa(\phi_f^k)}{\mu_f} \bF^{k,-\tt t} ({\bnabla}\delta\bu)^{\tt t}\biggr) \nonumber \\
&\qquad \qquad \times \bF^{k,-\tt t} \nabla p^k \cdot\nabla q  + \int_{\Omega}\rho_f J^k \bF^{k,-\tt t}:{\bnabla}\delta\bu\, \biggl(\frac{\phi_f^k-\phi_f^n}{\Delta t} - \ell(p^k,c_p^k)\biggr) q \nonumber \\
a_4(\delta p,q) & := \int_{\Omega}\! {\phi_f^k}J^k \bF^{k,-1} \frac{\bkappa(\phi_f^k)}{\mu_f} \bF^{k,-\tt t} \nabla \delta p \cdot\nabla q
- \!\!\int_{\Omega}\!\rho_f J^k \frac{\partial \ell(p^k,c_p^k)}{\partial p^k} \delta p\, q,\  a_5(\delta \phi_f,\psi)  := \int_{\Omega}\! \delta\phi_f \psi,   \nonumber\\
c_2(q,\delta c_p) & := - \int_{\Omega} \rho_fJ^k \frac{\partial \ell(p^k,c_p^k)}{\partial c_p^k} \delta c_p\, q, \quad F_4(q) := \int_{\Omega} \cR_4^{k,n} q,\quad F_5(\psi) := \int_{\Omega} \cR_5^{k} \psi,\nonumber\\
c_3^*(\delta \phi_f,q) & := \int_{\Omega}\! \frac{J^k}{\mu_f} \bF^{k,-1} \frac{\partial {[\bkappa(\phi_f^k)\cdot \phi_f^k ]}}{\partial \phi_f^k}\delta \phi_f \bF^{k,-\tt t} \nabla p^k \cdot\nabla q + \int_{\Omega} \!\!\rho_f J^k \frac{\delta\phi_f}{\Delta t} q, \nonumber\\
 c_4(\psi,\delta\bu)&  := \int_{\Omega} J^k \bF^{k,-\tt t}:{\bnabla}\delta\bu\, \psi,\nonumber
\end{align} 
where  $\bF^k=\bI + {\bnabla}\bu^k$, $J^{k} = \det\bF^k$, $\bF^{k,-1} = (\bF^k)^{-1}$, $\bF^{k,-\tt t} = (\bF^k)^{-\tt t}$,  and $\bcR_{3,\Omega}^{k},\bcR_{3,\Sigma}^k, \cR_5^{k}$ are vector and scalar residuals coming from the linearisation of \eqref{eq:u} and \eqref{eq:phi} and containing terms associated to the previous Newton step $k$ as well as body and traction loads. The terms $\cR_1^{k,n},\cR_2^{k,n}, \cR_4^{k,4}$ are scalar residuals from the linearisation of \eqref{eq:cp}, \eqref{eq:cl}, and \eqref{eq:p}, respectively; which in addition contain contributions from the previous time step $n$. 
\section{Stability and solvability of the linearised problem} \label{sec:stability}
\subsection{Definition and preliminaries}
For this section we draw inspiration from the stability analysis 
of a system similar to \eqref{eq:poroe}, recently 
proved in \cite{barnafi20}; and from the study of the linearised hyperelasticity equations, 
recently studied in \cite{farrell21}. 
We restrict the description to Neo-Hookean poroelastic solids with material parameters $\mu_s,\lambda_s$, for which the effective first Piola-Kirchhoff stress tensor is
\[{\bP_{\mathrm{eff}}= \mu_s(\bF - \bF^{- \tt t}) + \lambda_s \ln(J)\, \bF^{-\tt t}}.\] 
Note that a simplified version of the time-discrete tangent system \eqref{eq:weak-linear} is readily obtained by focusing on the first Newton-Raphson iterate when starting from 
{the following} initial guess 
\[{ \bu^k = \cero, \quad p^k = p_0, \quad  
\phi_f^k = \phi_0, \quad  c_p^k = c_l^k = 0,}
\] 
for displacement, fluid pressure, nominal porosity and species concentrations, which gives $D_t \phi_f^k = 0$, $\bF^k =\bI$, $J^k = 1$, $\nabla p^k = \cero$.   In addition, the residuals can be conveniently rearranged and we rescale the system in an appropriate manner using the parameters $\rho_f$, $\Delta t$, $\alpha$, $\mu_s$, $\phi_0$, and $\lambda_s$. Next we introduce the total pressure 
\begin{equation}\label{eq:total-pressure}
    \zeta := \alpha p - \lambda_s \vdiv \bu,
\end{equation}
and the system can be recast in terms of $\bu,\zeta,p,c_p,c_l$ (that is, using $\zeta$ instead of $\phi_f$), as follows 
\begin{alignat}{5}
&  \tilde{a}_1(c_p,w_p)+ a_1(c_p,w_p)   &   & & + \beta_1 b^*_3(p,w_p) &+ \beta_2 b^*_3(\zeta,w_p) &=& \tilde{F}_1(w_p)\ &\forall w_p \in \rH^1(\Omega),\nonumber\\
&   & \tilde{a}_2( c_l,w_l)+a_2( c_l,w_l)  &  & + \beta_1 b_5^*(p,w_l) &+ \beta_2 b_5^*(\zeta,w_l) &=& \tilde{F}_2(w_l) &\forall w_l \in \rH^1(\Omega),\nonumber\\
\label{eq:weak-linear-simple}&&&a_3(\bu,\bv) & &+ b_1(\zeta,\bv)   &=& F_3(\bv)  &\forall \bv \in\bH^1_{\Gamma}(\Omega),\\
  \nonumber
&b_4(c_p,q)&&  & \tilde{a}_4(p,q) + a_4(p,q) &-\frac{1}{\alpha} b_2(\delta_t \zeta,q)  &= &\tilde{F}_4(q)  &\forall q \in \rH^1_\Sigma(\Omega),\\
  \nonumber
&&& b_1(\psi,\bu) &+ b_2(\psi, p) &- a_5(\zeta,\psi)  &= &0 & \forall \psi\in \rL^2(\Omega),
\end{alignat}
where for a function scalar $X^{n+1}$, the expression $\delta_t X^{n+1}$ stands for $\frac{X^{n+1}-X^n}{\Delta t}$, and where
we have renamed the bilinear forms, which are now defined as 
\begin{gather}
   \tilde{a}_1(c_p,w_p)  = \int_{\Omega} \delta_t c_pw_p,\;\; a_1(c_p,w_p)= \int_{\Omega} \bD_p \nabla  c_p \cdot \nabla w_p, \;\; b_1(\psi,\bv) = -\int_{\Omega}\vdiv\bv\, \psi,  \;\; b_3^*(\zeta,w_p) = -\frac{1}{\Delta t} \int_{\Omega}\zeta w_p ,
 \nonumber \\
 \label{def:forms-simple}  
   \tilde{a}_2(c_l,w_l)  = \int_{\Omega}  \delta_t  c_lw_l, \quad a_2(c_l,w_l)  = \int_{\Omega} \bD_l  \nabla c_l \cdot \nabla w_l,\quad a_5(\zeta,\psi)  = \frac{1}{\lambda_s}\int_{\Omega} \zeta \psi, \quad \tilde{F}_4(q)  =  \int_{\Omega} \bigl(\frac{1}{\Delta t}\tilde{R}_4^n  - R_4\bigr) q,\nonumber\\
b^*_5(\zeta, w_l)  = -\frac{1}{\Delta t} \int_{\Omega}\zeta w_l,\quad b_2(\psi, q) = \frac{\alpha}{\lambda_s} \int_{\Omega}   q \psi, \quad
b_4(w_p,q) = \frac{-1}{\Delta t}\int_{\Omega} w_p q, \quad a_3(\bu, \bv)  = 2\mu_s \int_{\Omega}  {\beps}( \bu)
:{\beps}(\bv), \nonumber \\
\tilde{a}_4(  p,q)  = \bigl(1+ \frac{\alpha}{\lambda_s}\bigr) \int_{\Omega}\delta_t pq, \;\; a_4( p,q)  =     \int_{\Omega}  \frac{\bkappa}{\mu_f}   \nabla  p \cdot\nabla q
, \;\;  
c_3^*(\zeta,q)  = -\frac{\rho_f}{\Delta t}\int_{\Omega}\zeta q, 
\;\;  F_3(\bv) = \int_\Omega (\rho_s\bb +\mathbf{R}_3)\cdot \bv, \nonumber\\
    \tilde{F}_1(w_p) = \int_{\Omega} (\frac{1}{\Delta t}\tilde{R}_1^n + R_1) w_p, \quad \tilde{F}_2(w_l) = \int_{\Omega} (\frac{1}{\Delta t}\tilde{R}_2^n + R_2) w_l.
\end{gather} 
Here we have also set $\bt_\Sigma = \cero$. 
Note that the linear variational problem \eqref{eq:weak-linear-simple} corresponds to the linear and 
coupled reaction-diffusion / three-field Biot equations 
\begin{align}
\delta_t c_p-\frac{1}{\Delta t}(\beta_1 p+ \beta_2\zeta) - \vdiv(\bD_p \nabla c_p) & = \frac{1}{\Delta t}\tilde{R}_1^n + R_1,\nonumber\\
\delta_t c_l-\frac{1}{\Delta t}(\beta_1 p+ \beta_2\zeta) - \vdiv(\bD_l \nabla c_l) & = \frac{1}{\Delta t}\tilde{R}_2^n + R_2,\nonumber\\ 
-\bdiv(2\mu_s \beps(\bu) - \zeta \bI ) & = \rho_s\bb + \mathbf{R}_3, \label{eq:strong-linear}\\
-\frac{1}{\Delta t} c_p + \Big(1+\frac{\alpha}{ \lambda_s}\Big) \delta_t p - \frac{1}{\lambda_s}\delta_t \zeta - \vdiv \biggl(\frac{\bkappa}{\mu_f} \nabla p\biggr) & = \frac{1}{\Delta t}\tilde{R}_4^n  - R_4,\nonumber\\
\frac{\alpha}{\lambda_s} p  - \frac{1}{\lambda_s} \zeta - \vdiv\bu & = 0,\nonumber
\end{align} 
where the $R_i$'s and $\tilde{R}_i$'s are scalar and vector residuals, and the $\beta_i>0$ are additional constants arising from the linearisation of the reaction terms and from the change of variables \eqref{eq:total-pressure}.   
A variant of system \eqref{eq:strong-linear} including advection and nonlinear reaction, has been studied in \cite{verma21}. We follow that reference and in the remainder of this section we present the solvability, stability, and convergence analysis for the in-time problem associated with \eqref{eq:weak-linear-simple}. 

We will assume that the anisotropic permeability $\boldsymbol{\kappa}$ and the diffusion matrices $D_p,D_l$
are uniformly bounded and positive definite in $\Omega$. The latter means that, there exist positive constants $\kappa_1, \kappa_2$, and $D_i^{\min}, D_i^{\max}$, $i\in \{p,l\}$, such that
$$\kappa_1|\bw|^2 \leq \bw^{\mathrm{t}}\kappa\bw \leq \kappa_2|\bw|^2,\quad \mathrm{and}\quad D_i^{\min}|\bw|^2 \leq \bw^{\mathrm{t}}D_i\bw \leq D_i^{\max} |\bw|^2  \quad \forall\bw \in\mathbb{R}^d.$$
{We also recall the following version of Korn's inequality 
$$C_{k,1}\Vert\bu\Vert^2_{1,\Omega}\leq \Vert\boldsymbol{\varepsilon}(\bu)\Vert^2_{0,\Omega},$$
where $C_{k,1}$ is a positive constant (see, e.g., \cite{brenner}).  
This bound, together with the assumptions above on the permeability and diffusivity, imply the} following coercivity and positivity properties
\begin{gather}
a_1(w_p,w_p) \ge D_p^{\min} |w_p|^2_{1,\Omega}, \quad a_2(w_l,w_l) \ge D_l^{\min} |w_l|^2_{1,\Omega},  \\
a_3(\bv,\bv)  \ge 2 \mu_s C_{k,1}\|\bv\|_{1,\Omega}^2 , \quad a_4(q,q) \ge \frac{\kappa_1}{\mu_f} \|q^f\|_{1,\Omega}^2, \quad
a_5(\zeta,\zeta)  = \lambda_s^{-1}\|\zeta\|_{0,\Omega}^2,\nonumber   \label{elip_a_1}
\end{gather}
for all $\bv \in \mathrm{\bH}^1_\Gamma(\Omega)$, $\zeta \in \mathrm{\mathrm{L}}^2(\Omega)$, $w_p,w_l \in \mathrm{H}^1(\Omega)$, $q\in \mathrm{H}^1_\Sigma(\Omega)$. Moreover, {thanks to the structure of the formulation, the bilinear form $b_1$ (which coincides with the usual non-diagonal bilinear form in the Stokes equations) satisfies the following} inf-sup condition (see, e.g.,  \cite{girault79}):
For every $\zeta \in \mathrm{L}^2(\Omega)$, there exists $\beta >0$ \ such that
\begin{equation}\label{inf-sup}
\sup_{\bv \in \bH^1_\Gamma(\Omega)} \frac{b_1(\bv,\zeta)}{\|\bv\|_{1,\Omega}}  \ge \beta \|\zeta\|_{0,\Omega}.
\end{equation} Finally, we recall an important {discrete-in-time} identity and introduce the discrete-in-time norm
\begin{equation*}
\int_\Omega X^{n+1}\delta_t X^{n+1} = \frac{1}{2}\delta_t \Vert X^{n+1} \Vert{^2} +\frac{1}{2}\Delta t\Vert\delta_t X^{n+1}\Vert{^2},\qquad \Vert X\Vert^2_{\ell^2(V)}:=\Delta t\sum_{m=0}^{n}\Vert X^{m+1}\Vert_V^2,
\end{equation*}
respectively, which will be useful for the subsequent analysis.

\subsection{Fixed-point scheme and unique solvability of the decoupled problems} With the aim to facilitate the comprehension and clarity of the forthcoming analysis, through this section and Sections \ref{sec:coupled} - \ref{sec:stab}, we take up again the notation $(\cdot)^{n+1}$ given in Section \ref{sec:linearisation} to denote the current time step of the involved variables. Moreover, in order to apply a fixed-point scheme, we modify the variational formulation \eqref{eq:weak-linear-simple} in an equivalent one: find $(c^{n+1}_p,c^{n+1}_l,\bu^{n+1}, p^{n+1}, \zeta^{n+1})\in \mathrm{H}^1(\Omega)\times \mathrm{H}^1(\Omega)\times \mathbf{H}_\Gamma^1(\Omega)\times\mathrm{H}_\Sigma^1(\Omega)\times \mathrm{L}^2(\Omega)$ such that 
\begin{subequations}
\begin{alignat}{5}
&  \tilde{a}_1(c^{n+1}_p,w_p)+ a_1(c^{n+1}_p,w_p)   &   & & &&=& F_{1,p^{n+1},\zeta^{n+1}}(w_p), \label{eq:DR_1}\\
&    \tilde{a}_2( c^{n+1}_l,w_l)+a_2( c^{n+1}_l,w_l) & &  & & &=& F_{2,p^{n+1},\zeta^{n+1}}(w_l), \label{eq:DR_2}\\
&&&a_3(\bu^{n+1},\bv) & &+ b_1(\zeta^{n+1},\bv)   &=& F_3(\bv) , \label{eq:poro_1}\\
&&&  & \tilde{a}_4(p^{n+1},q) + a_4(p^{n+1},q) &-\frac{1}{\alpha} b_2(\delta_t \zeta^{n+1},q)  &= &F_{4,c^{n+1}_p}(q)  \label{eq:poro_2},\\
&&& b_1(\psi,\bu^{n+1}) &+ b_2(\psi, p^{n+1}) &- a_5(\zeta^{n+1},\psi)  &= &0,\label{eq:poro_3}
\end{alignat}
for each $(w_p,w_l,\bv,q,\psi)\in \mathrm{H}^1(\Omega)\times \mathrm{H}^1(\Omega)\times\mathbf{H}^1_\Gamma(\Omega)\times \mathrm{H}^1_\Sigma(\Omega)\times \mathrm{L}^2(\Omega)$, where the functionals $F_{1,p^{n+1},\zeta^{n+1}}$, $F_{2,p^{n+1},\zeta^{n+1}}$, $F_{4,c_p^{n+1}}$ are defined as
\end{subequations}
\begin{equation*}
	F_{1,q,\psi}(w_p):=\tilde{F}_1(w_p)-\beta_1b^*_3(q,w_p)-\beta_2b^*_3(\psi,w_p),\quad F_{2,q,\psi}(w_l):=\tilde{F}_2(w_l)-\beta_1b^*_3(q,w_l)-\beta_2b^*_3(\psi,w_l),
\end{equation*}
\begin{equation}\label{eq:functionals}
F_{4,w_p}(q):=\tilde{F}_4(q)-b_4(w_p,q),\end{equation}
respectively. 

We then start with the fixed-point strategy. Let us define the operator $\mathbf{S}:\mathrm{H}^1(\Omega)\rightarrow \mathrm{H}^1_\Sigma(\Omega)\times \mathrm{L}^2(\Omega)$ as
\begin{equation}\label{eq:def_S}
\mathbf{S}(c^{n+1}_p):=(\mathbf{S}_1(c^{n+1}_p),\mathbf{S}_2(c^{n+1}_p)):=(p^{n+1},\zeta^{n+1})\quad \forall\,c^{n+1}_p\in \mathrm{H}^1(\Omega),
\end{equation}
where $p^{n+1}\in \mathrm{H}^1_\Sigma(\Omega)$ and $\zeta^{n+1}\in \mathrm{L}^2(\Omega)$ are the second and third components of the solution of problem: Find $(\bu^{n+1},p^{n+1},\zeta^{n+1})\in \mathbf{H}^1_\Gamma(\Omega)\times\mathrm{H}^1_\Sigma(\Omega)\times \mathrm{L}^2(\Omega) $ such that 
\begin{subequations}
\begin{alignat}{5}
&&&a_3(\bu^{n+1},\bv) & &+ b_1(\zeta^{n+1},\bv)   &\;=\;& F_3(\bv)  &\quad\forall \bv \in\bH^1_{\Gamma}(\Omega),\label{eq:uncoupled1} \\
\label{eq:uncoupled2}
&&&  & \tilde{a}_4(p^{n+1},q) + a_4(p^{n+1},q) &-\frac{1}{\alpha} b_2(\delta_t \zeta^{n+1},q)  &\;=\; &F_{4,{c}^{n+1}_p}(q)  &\quad\forall q \in \rH^1_\Sigma(\Omega),\\
&&& b_1(\psi,\bu^{n+1}) &+ b_2(\psi, p^{n+1}) &- a_5(\zeta^{n+1},\psi)  &\;=\; &0 & \quad\forall \psi\in \rL^2(\Omega),\label{eq:uncoupled3}
\end{alignat} 
\end{subequations}
for a given $c^{n+1}_p\in \mathrm{H}^1(\Omega)$. In turn, let $\tilde{\mathbf{S}}:\mathrm{H}^1_\Sigma(\Omega)\times \mathrm{L}^2(\Omega)\rightarrow\mathrm{H}^1(\Omega)$ be the operator defined by 
\begin{equation}\label{eq:def_TS}
\tilde{\mathbf{S}}(p^{n+1},\zeta^{n+1}):=c_p^{n+1}\quad \forall\,(p^{n+1},\zeta^{n+1})\in \mathrm{H}^1_\Sigma(\Omega)\times \mathrm{L}^2(\Omega),
\end{equation}
where $c_p^{n+1}\in\mathrm{H}^1(\Omega)$ is the first component of the solution of the problem: Find $(c^{n+1}_p,c^{n+1}_l)\in [\mathrm{H}^1(\Omega)]^2$ such that 
\begin{subequations}
\begin{alignat}{5}
\tilde{a}_1(c^{n+1}_p,w_p) &\;+ &\; a_1(c^{n+1}_p,w_p) &  &  &  = &\; F_{1,{p}^{n+1},{\zeta}^{n+1}}(w_p) &\quad\forall w_p \in \mathrm{H}^1(\Omega), \label{eq:DR_1_FP}\\
\tilde{a}_2(c^{n+1}_l,w_l) &\;+ &\; a_2(c^{n+1}_l,w_l) &  &  &  = &\; F_{2,{p}^{n+1},{\zeta}^{n+1}}(w_l) &\quad\forall w_l \in \mathrm{H}^1(\Omega) \label{eq:DR_2_FP}, 
\end{alignat}
\end{subequations} for a given pair $(p^{n+1},\zeta^{n+1})\in \mathrm{H}^1_\Sigma(\Omega)\times \mathrm{L}^2(\Omega)$. Therefore, we define the map $\mathbf{T}:\mathrm{H}^1(\Omega)\rightarrow \mathrm{H}^1(\Omega)$ as
\begin{equation}\label{eq:def_T}
	\mathbf{T}(c^{n+1}_p):=\tilde{\mathbf{S}}(\mathbf{S}_1(c^{n+1}_p),\mathbf{S}_2(c^{n+1}_p))\quad \forall\,c_p^{n+1}\in \mathrm{H}^1(\Omega),
\end{equation}
and one readily realises that solving \eqref{eq:DR_1} - \eqref{eq:poro_3} is equivalent to seeking a fixed point of the solution operator $\mathbf{T}$.
 
Now, in order to prove that the operator $\mathbf{T}$ is well-defined, we first need to investigate that the uncoupled problems defined by the operators $\mathbf{S}$ and $\mathbf{\tilde{S}}$ are well-posed. The fact that the poroelastic problem is well-posed for a given ${c}^{n+1}_p$ is given next. Its proof can be deduced by employing the Fredholm  alternative approach and classical tools frequently used for showing the well-posedness of elliptic/parabolic equations. We refer to \cite[Lemmas 2.1, 2.2 and 2.3]{verma21} for further details.
\begin{lemma}\label{eq:stability-poro}
For a given ${c}^{n+1}_p\in \mathrm{H}^1(\Omega)$, the problem \eqref{eq:uncoupled1} - \eqref{eq:uncoupled3} has a unique solution $(\bu^{n+1}, p^{n+1}, \zeta^{n+1})$ $\in \mathbf{H}^1_\Gamma(\Omega)\times \mathrm{H}_\Sigma^1(\Omega)\times \mathrm{L}^2(\Omega)$. Moreover, there exists $C>0$, {independent of $\lambda_s$}, such that for each $n$,
\begin{align}\label{eq:CD1}
\begin{split}
&\Vert \bu^{n+1}\Vert^2_{1,\Omega} + \Vert p^{n+1}\Vert^2_{0,\Omega} + \Vert \zeta^{n+1}\Vert^2_{0,\Omega} + \Vert p\Vert^2_{\ell^2(H^1(\Omega))}\\
&\qquad  \leq {\exp}(C) \Big\{\Vert\bu^{0}\Vert^2_{1,\Omega} + \Vert p^{0}\Vert_{0,\Omega}^2+ \Vert \zeta^{0}\Vert^2_{0,\Omega}+ \Vert\mathbf{R}_3\Vert^2_{0,\Omega}+  \Vert\bb\Vert_{0,\Omega}^2 + \Delta t\Vert R_4\Vert^2_{0,\Omega}\Big.\\
&\qquad \qquad \qquad \quad \Big.+\sum_{m=0}^{n}\Vert \tilde{R}^m_4\Vert^2_{0,\Omega}+\sum_{m=0}^{n}\Vert {c}^{m+1}_{p}\Vert^2_{0,\Omega} \Big\}.
\end{split}
\end{align}
\end{lemma}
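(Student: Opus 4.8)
The plan is to read \eqref{eq:uncoupled1}--\eqref{eq:uncoupled3}, at a fixed time level $n+1$, as a single linear but \emph{non-symmetric} mixed variational problem, to establish the a priori bound \eqref{eq:CD1} first (which delivers uniqueness at once), and then to recover existence through a Fredholm-alternative argument, exactly as in \cite[Lemmas 2.1--2.3]{verma21}. Structurally I would isolate the saddle-point skeleton in $(\bu^{n+1},\zeta^{n+1})$: the form $a_3$ is coercive on all of $\bH^1_\Gamma(\Omega)$ by Korn's inequality with constant $C_{k,1}$, the divergence coupling $b_1$ satisfies the inf-sup condition \eqref{inf-sup}, and $-a_5$ acts as a negative-definite stabilisation. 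The fluid-pressure equation \eqref{eq:uncoupled2} is elliptic, since $a_4$ is coercive on $\rH^1_\Sigma(\Omega)$ via Poincaré (functions vanish on $\Sigma$) and the mass term inside $\tilde{a}_4$ only adds a nonnegative contribution. The remaining couplings ($b_2$ between $p$ and $\zeta$, and the lower-order terms) involve only $\rL^2(\Omega)$ pairings, so that through the compact embedding $\rH^1(\Omega)\hookrightarrow\hookrightarrow\rL^2(\Omega)$ they appear as compact perturbations of the principal invertible mixed operator.

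For the a priori estimate I would test \eqref{eq:uncoupled1} with $\bv=\bu^{n+1}$, \eqref{eq:uncoupled2} with $q=p^{n+1}$, and \eqref{eq:uncoupled3} with $\psi=\zeta^{n+1}$, and combine the three identities so that the indefinite terms $b_1(\zeta^{n+1},\bu^{n+1})$ cancel between the first and third. The coercivity properties then control $2\mu_s C_{k,1}\|\bu^{n+1}\|_{1,\Omega}^2$, the Darcy energy $\tfrac{\kappa_1}{\mu_f}|p^{n+1}|_{1,\Omega}^2$, and the mass contributions in $p$ and $\zeta$. I would apply the discrete-in-time identity $\int_\Omega X^{n+1}\delta_t X^{n+1}=\tfrac12\delta_t\|X^{n+1}\|^2+\tfrac{\Delta t}{2}\|\delta_t X^{n+1}\|^2$ to every $\delta_t$ term (those in $\tilde{a}_4$ and in the $b_2(\delta_t\zeta^{n+1},\cdot)$ coupling), so that summing over $m=0,\dots,n$ telescopes the energy and leaves the accumulated dissipation $\|p\|_{\ell^2(H^1(\Omega))}^2$ on the left-hand side. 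The forcing, including the $c_p^{n+1}$ contribution carried by $F_{4,c_p^{n+1}}$ through $b_4$, is bounded by Cauchy--Schwarz and Young's inequality, after which a discrete Gr\"onwall inequality absorbs the zeroth-order terms and produces the factor $\exp(C)$.

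The delicate point, and the one where Korn and the inf-sup condition must be used deliberately, is to keep $C$ \emph{independent of $\lambda_s$}: both $a_5(\zeta,\zeta)=\lambda_s^{-1}\|\zeta\|_{0,\Omega}^2$ and the coupling $b_2$ degenerate as $\lambda_s\to\infty$, so controlling $\|\zeta^{n+1}\|_{0,\Omega}$ through the vanishing mass term is not admissible. Instead I would read \eqref{eq:uncoupled1} as $b_1(\zeta^{n+1},\bv)=F_3(\bv)-a_3(\bu^{n+1},\bv)$ and invoke \eqref{inf-sup} to obtain $\|\zeta^{n+1}\|_{0,\Omega}\lesssim\|\bu^{n+1}\|_{1,\Omega}+\|F_3\|$, a bound uniform in $\lambda_s$; this is the mechanism that makes \eqref{eq:CD1} robust. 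A secondary subtlety, which I expect to be the main obstacle, is the lack of symmetry between the $p$--$\zeta$ couplings (the factor $\tfrac{1}{\Delta t}$ and the $\delta_t$ acting on $\zeta$ in \eqref{eq:uncoupled2}, against the plain $b_2(\psi,p)$ in \eqref{eq:uncoupled3}, consistent with the change of variables \eqref{eq:total-pressure}); these cannot be made to cancel directly and must instead be reconciled through the discrete-in-time identity, which is what keeps the energy argument dissipative.

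Finally, uniqueness follows from the homogeneous version of the same bound: setting all data to zero forces $\bu^{n+1}=\cero$, $p^{n+1}=0$, and $\zeta^{n+1}=0$. Since the operator was written as an invertible principal part plus the compact perturbation identified in the first paragraph, the Fredholm alternative then upgrades this trivial kernel to bijectivity, yielding existence of $(\bu^{n+1},p^{n+1},\zeta^{n+1})\in\bH^1_\Gamma(\Omega)\times\rH^1_\Sigma(\Omega)\times\rL^2(\Omega)$ together with \eqref{eq:CD1}.
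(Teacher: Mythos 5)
Your proposal is correct and follows essentially the same route as the paper, which proves this lemma only by deferring to the Fredholm-alternative strategy and energy estimates of \cite[Lemmas 2.1--2.3]{verma21}: an a priori bound via testing with the solution, the discrete-in-time identity plus discrete Gr\"onwall, and existence upgraded from uniqueness through the Fredholm alternative for the compactly perturbed saddle-point operator. In particular, your mechanism for $\lambda_s$-robustness---bounding $\Vert\zeta^{n+1}\Vert_{0,\Omega}$ from the elasticity equation via the inf-sup condition \eqref{inf-sup} rather than through the degenerating form $a_5$---is exactly the device the paper itself invokes later (in the proof of Lemma \ref{uniqueness-poro}), so nothing essential is missing.
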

In turn, the well-posedness of the uncoupled diffusion-reaction problem is given next.
\begin{lemma} \label{uncoupled-ADR}
	For fixed ${p}^{n+1} \in \mathrm{H}^1_\Sigma(\Omega)$ and ${\zeta}^{n+1}\in \mathrm{L}^2(\Omega)$, the system of nonlinear parabolic equations \eqref{eq:DR_1_FP} - \eqref{eq:DR_2_FP}
	has a unique solution $(c_p^{n+1},c^{n+1}_l)\in [\mathrm{H}^1(\Omega)]^2$, that is continuously dependent on data, that is, there exists 
	$C > 0$, such that for each $n$, 
	\begin{align}
	&\Vert c^{n+1}_p\Vert^2_{0,\Omega}+\Vert c^{n+1}_l\Vert^2_{0,\Omega}+ \Vert \nabla c_p \Vert^2_{\ell^2(L^2(\Omega))}+ \Vert \nabla c_l\Vert^2_{\ell^2(L^2(\Omega))}\nonumber\\
	&\leq \exp(C)\Big\{\Vert c_p^{0}\Vert^2_{0,\Omega} + \Vert c_l^0\Vert^2_{0,\Omega}+\sum_{m=0}^{n}(\Vert \tilde{R}^m_1\Vert^2_{0,\Omega}+\Vert \tilde{R}^m_2\Vert^2_{0,\Omega})+ \Delta t\Vert R_1\Vert^2_{0,\Omega}+\Delta t\Vert R_2\Vert^2_{0,\Omega}\label{bound-w1h-w2h-stability}\\
	&\qquad \qquad \quad + \sum_{m=0}^{n}(\Vert{\zeta}^{m+1}\Vert^2_{0,\Omega}+\Vert{p}^{m+1}\Vert^2_{0,\Omega})\Big \}.\nonumber
	\end{align}
\end{lemma}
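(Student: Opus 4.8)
The key structural observation is that, once $(p^{n+1},\zeta^{n+1})\in\rH^1_\Sigma(\Omega)\times\rL^2(\Omega)$ is frozen, equations \eqref{eq:DR_1_FP}-\eqref{eq:DR_2_FP} are linear and \emph{fully decoupled}: the first involves only $c_p^{n+1}$ and the second only $c_l^{n+1}$. I would therefore analyse each one as an independent backward-Euler step of a scalar reaction-diffusion equation, the two arguments being identical up to the relabelling $(c_p,\bD_p,D_p^{\min},\tilde R_1,R_1)\leftrightarrow(c_l,\bD_l,D_l^{\min},\tilde R_2,R_2)$.

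For unique solvability I would recast \eqref{eq:DR_1_FP} into a form amenable to the Lax-Milgram lemma. Using $\tilde a_1(c_p^{n+1},w_p)=\frac{1}{\Delta t}\int_\Omega c_p^{n+1}w_p-\frac{1}{\Delta t}\int_\Omega c_p^{n}w_p$ from \eqref{def:forms-simple}, the unknown solves $\mathcal B(c_p^{n+1},w_p)=\mathcal L(w_p)$ with $\mathcal B(c,w):=\frac{1}{\Delta t}\int_\Omega cw+a_1(c,w)$ and $\mathcal L(w):=F_{1,p^{n+1},\zeta^{n+1}}(w)+\frac{1}{\Delta t}\int_\Omega c_p^{n}w$. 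The form $\mathcal B$ is clearly bounded on $\rH^1(\Omega)$, and combining the mass term with the diffusion coercivity $a_1(w,w)\ge D_p^{\min}|w|_{1,\Omega}^2$ gives the full $\rH^1$-coercivity $\mathcal B(w,w)\ge\min\{(\Delta t)^{-1},D_p^{\min}\}\,\|w\|_{1,\Omega}^2$. Since $p^{n+1},\zeta^{n+1}$ and $c_p^{n}$ are fixed data, $\mathcal L$ is a bounded functional, so Lax-Milgram yields a unique $c_p^{n+1}\in\rH^1(\Omega)$; the same reasoning gives $c_l^{n+1}\in\rH^1(\Omega)$.

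To obtain \eqref{bound-w1h-w2h-stability} I would test \eqref{eq:DR_1_FP} with $w_p=c_p^{n+1}$ and invoke the discrete-in-time identity recalled above, giving $\frac12\delta_t\|c_p^{n+1}\|_{0,\Omega}^2+\frac{\Delta t}{2}\|\delta_t c_p^{n+1}\|_{0,\Omega}^2+D_p^{\min}|c_p^{n+1}|_{1,\Omega}^2\le F_{1,p^{n+1},\zeta^{n+1}}(c_p^{n+1})$. I would then expand the right-hand side, which collects $\frac{1}{\Delta t}\tilde R_1^{n}$, $R_1$, and the terms $\frac{\beta_1}{\Delta t}p^{n+1}$, $\frac{\beta_2}{\Delta t}\zeta^{n+1}$ coming from $-\beta_i b_3^*(\cdot,c_p^{n+1})$, and bound each contribution by Cauchy-Schwarz and Young's inequality against $\|c_p^{n+1}\|_{0,\Omega}$. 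Multiplying by $2\Delta t$ and summing over $m=0,\dots,n$ telescopes the first term into $\|c_p^{n+1}\|_{0,\Omega}^2-\|c_p^{0}\|_{0,\Omega}^2$ and assembles the diffusion term into $2D_p^{\min}\|\nabla c_p\|_{\ell^2(L^2(\Omega))}^2$; crucially, the prefactor $\Delta t$ cancels each $(\Delta t)^{-1}$, so $\tilde R_1^{m}$ appears with the clean weight $\sum_m\|\tilde R_1^{m}\|_{0,\Omega}^2$, $R_1$ with weight $\Delta t$, and $p^{m+1},\zeta^{m+1}$ with the summed weights, precisely matching the right-hand side of \eqref{bound-w1h-w2h-stability}.

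The only term obstructing a direct estimate is the residual $(1+\Delta t+\beta_1+\beta_2)\sum_m\|c_p^{m+1}\|_{0,\Omega}^2$ generated on the right by Young's inequality. I would absorb the current-step contribution into the left-hand side under a smallness assumption on $\Delta t$ and then close the estimate with the discrete Grönwall inequality, which accounts for the factor $\exp(C)$ in \eqref{bound-w1h-w2h-stability}. This Grönwall step is the main technical point: one must verify that the resulting constant $C$ stays independent of $\Delta t$ and of the number of time steps. Adding the analogous estimate for $c_l^{n+1}$ then gives \eqref{bound-w1h-w2h-stability}.
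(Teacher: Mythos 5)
Your proposal follows essentially the same route as the paper's proof: the paper likewise treats the frozen-data system as two decoupled linear backward-Euler steps (invoking classical coercivity arguments where you make the Lax--Milgram lemma explicit), tests \eqref{eq:DR_1_FP} with $w_p=c_p^{n+1}$, applies the discrete-in-time identity together with the Cauchy--Schwarz and Young inequalities, multiplies by $\Delta t$, sums over the time steps, and closes with Gr\"onwall's inequality. The caveat you flag --- that the term $\sum_m\Vert c_p^{m+1}\Vert^2_{0,\Omega}$ inherits an $O(1)$ rather than $O(\Delta t)$ weight because of the $(\Delta t)^{-1}$ coupling terms --- is present in the paper's own intermediate estimate \eqref{bound-cp} as well, where it is simply absorbed into the generic constant $\exp(C)$ by the Gr\"onwall step.
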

\begin{proof}
	For each n, the unique solvability of \eqref{eq:DR_1_FP} - \eqref{eq:DR_2_FP} can be obtained by using the coercivity of the diffusivities $\mathbf{D}_{i} \;\; i\in \{p,l\}$, and classical tools for time-discrete approximation schemes of parabolic equations (see for instance \cite{quarteroni94}). Now, for the stability estimation, we begin by obtaining the corresponding result for $c_p^{n+1}$. Consider $w_{p}=c_{p}^{n+1}$ in \eqref{eq:DR_1_FP}, to get
	\begin{equation*}
	\int_\Omega  \delta_t  c_{p}^{n+1}c_{p}^{n+1} + \int_\Omega \mathbf{D}_p\nabla c_{p}^{n+1}\cdot \nabla c_{p}^{n+1} =\int_{\Omega} (\frac{1}{\Delta t}\tilde{R}_1^n + R_1) c_p^{n+1}+ \beta_1\frac{1}{\Delta t} \int_{\Omega}{\zeta}^{n+1} c_p^{n+1} + \beta_2\frac{1}{\Delta t} \int_{\Omega}{p}^{n+1} c_p^{n+1} ,
	\end{equation*}
	and then, applying the property \eqref{elip_a_1}, and the classical Cauchy-Schwarz inequality, we obtain
	\begin{align*}
	\begin{split}
	&\frac{1}{2}\delta_t\Vert c^{n+1}_p\Vert^2_{0,\Omega}+ \frac{1}{2}\Delta t \Vert \delta_t c^{n+1}_p\Vert^2_{0,\Omega}+  D_p^{\min} \Vert \nabla c^{n+1}_p \Vert^2_{0,\Omega}\\
	&\quad \qquad \leq\frac{1}{\Delta t}\Vert \tilde{R}^n_1\Vert_{0,\Omega}\Vert c^{n+1}_p \Vert_{0,\Omega} + \Vert R_1\Vert_{0,\Omega}\Vert c^{n+1}_p \Vert_{0,\Omega} + \frac{1}{\Delta t}\Vert\beta_1{\zeta}^{n+1} +\beta_2{p}^{n+1}\Vert_{0,\Omega}\Vert c_p^{n+1}\Vert_{0,\Omega}.
	\end{split}
	\end{align*}
	Applying Young's inequality, it is possible to get 
	\begin{align*}
	\begin{split}
	&\frac{1}{2}\delta_t\Vert c^{n+1}_p\Vert^2_{0,\Omega}+ \frac{1}{2}\Delta t \Vert \delta_t c^{n+1}_p\Vert^2_{0,\Omega}+ D_p^{\min}\Vert \nabla c^{n+1}_p\Vert^2_{0,\Omega}\\
	&\qquad\qquad\qquad \leq \frac{1}{2\Delta t}\Vert \tilde{R}^n_1\Vert^2_{0,\Omega}+(\frac{1}{\Delta t}+\frac{1}{2})\Vert c^{n+1}_p \Vert_{0,\Omega}^2 + \frac{1}{2}\Vert R_1\Vert^2_{0,\Omega}+ \frac{\beta_1}{2\Delta t}\Vert{\zeta}^{n+1}\Vert^2_{0,\Omega} +\frac{\beta_2}{2\Delta t}\Vert{p}^{n+1}\Vert_{0,\Omega}^2,
	\end{split}
	\end{align*}
	and summing over $n-1$ and multiplying by $\Delta t$, we finally deduce that 
	\begin{align}\label{bound-cp}
	\begin{split}
	&\frac{1}{2}\Vert c_p^{n}\Vert^2_{0,\Omega}+ \frac{1}{2}\Delta t^2\sum_{m=0}^{n-1} \Vert \delta_t c_p^{m+1}\Vert^2_{0,\Omega}+ D_p^{\min}\Delta t \sum_{m=0}^{n-1}\Vert \nabla c_p^{m+1}\Vert^2_{0,\Omega}\\
	&\quad \qquad  \leq \Vert c_p^{0}\Vert_{0,\Omega} +\frac{1}{2}\sum_{m=0}^{n-1}\Vert \tilde{R}^m_1\Vert^2_{0,\Omega}+(1+\frac{\Delta t}{2}) \sum_{m=0}^{n-1} \Vert c_p^{m+1} \Vert_{0,\Omega}^2 + \frac{\Delta t}{2}\sum_{m=0}^{n-1}\Vert R_1\Vert^2_{0,\Omega}\\
	&\qquad \qquad + \frac{\beta_1}{2}\sum_{m=0}^{n-1}\Vert{\zeta}^{m+1}\Vert^2_{0,\Omega} +\frac{\beta_2}{2}\sum_{m=0}^{n-1}\Vert{p}^{m+1}\Vert_{0,\Omega}^2.
	\end{split}
	\end{align}
	The same result can be derived for $c_l^{n}$. Finally, the estimate \eqref{bound-w1h-w2h-stability} follows by adding the aforementioned result to \eqref{bound-cp} and then, applying Gronwall's inequality, and using the resulting approach for $n+1$.
\end{proof}
\subsection{Existence of a weak solution to the coupled linearised system}\label{sec:coupled}
 Let us define a closed subset of the Banach space $\mathrm{L}^2(\Omega)$ as 
 \begin{equation}\label{eq:def_K_1}
 \mathcal{K}:= \lbrace c^{n+1}_p \in \mathrm{L}^2(\Omega): \quad \Vert c^{n+1}_p\Vert_{1,\Omega}\leq r\rbrace,
 \end{equation}
 for a given $r>0$. We then restrict the space in \eqref{eq:def_T} to the ball \eqref{eq:def_K_1}, and therefore, we look for a $c_p^{n+1}\in \mathcal{K}$ such that $\mathbf{T}({c}^{n+1}_p) = c^{n+1}_p$. In what follows, we prove the existence of that $c_p^{n+1}$ by means of the Schauder fixed-point approach. We start with a previous result.
 \begin{lemma}
For a given ${c}^{n+1}_p\in \mathrm{H}^1(\Omega)$, the problem defined  by the operator $\mathbf{S}$ (cf. \eqref{eq:def_S}) satisfies the following estimate 
\begin{align}\label{eq:SN}
\begin{split}
&\Vert\mathbf{S}(c_p^{n+1})\Vert_{\boldsymbol{0,\Omega}}\leq \Vert p^{n+1}\Vert_{1,\Omega} + \Vert \zeta^{n+1}\Vert_{0,\Omega}\\
&\qquad  \leq C \Big\{\Vert\mathbf{R}_3\Vert_{0,\Omega}+\Vert\bb^{m+1}\Vert_{0,\Omega} + \Vert R_4\Vert_{0,\Omega}+\Vert \tilde{R}^n_4\Vert_{0,\Omega}+\Vert\zeta^{n}\Vert_{0,\Omega}\Big\} + \tilde{C}\Vert {c}_{p}^{n+1}\Vert_{0,\Omega},
\end{split}
\end{align}
where $C$ and $\tilde{C}$ are positive constants depending on $\Delta t, \mu_f, \mu_s,\kappa_1, \beta, \alpha$,  with
 \begin{equation}\label{def_TC}
 \tilde{C}:=(3\alpha\mu_f((2\Delta t^2\kappa_1)\min\{\mu_s/\Delta t,\alpha \kappa_1/2\mu_f,\beta^2\})^{-1})^{1/2}.
 \end{equation} 
 \end{lemma}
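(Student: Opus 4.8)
The plan is to derive an a priori bound for the solution $(\bu^{n+1},p^{n+1},\zeta^{n+1})$ of the poroelastic block \eqref{eq:uncoupled1}--\eqref{eq:uncoupled3} that defines $\mathbf{S}$, by testing each equation with the solution itself and then isolating the single contribution of the datum $c_p^{n+1}$, which enters only through the functional $F_{4,c^{n+1}_p}$ (cf. \eqref{eq:functionals}). Since $\mathbf{S}(c_p^{n+1})=(p^{n+1},\zeta^{n+1})$, the first inequality in \eqref{eq:SN} is the definition of the product norm and the real content is the second bound; it suffices to control $\|p^{n+1}\|_{1,\Omega}$ and $\|\zeta^{n+1}\|_{0,\Omega}$, with the displacement entering only as an auxiliary quantity. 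Existence and uniqueness are already granted by Lemma~\ref{eq:stability-poro} (via \cite[Lemmas 2.1--2.3]{verma21}), so the task is purely to quantify the stability constants $C$ and $\tilde C$.

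First I would take $\bv=\bu^{n+1}$ in \eqref{eq:uncoupled1} and $\psi=\zeta^{n+1}$ in \eqref{eq:uncoupled3} and subtract, so that the indefinite coupling $b_1(\zeta^{n+1},\bu^{n+1})$ cancels and one is left with $a_3(\bu^{n+1},\bu^{n+1})+a_5(\zeta^{n+1},\zeta^{n+1})=F_3(\bu^{n+1})+b_2(\zeta^{n+1},p^{n+1})$. Independently, I would take $q=p^{n+1}$ in \eqref{eq:uncoupled2} and use the discrete-in-time identity recalled above to expose the coercive contribution of $a_4$ together with the mass term in $\tilde a_4$. The $\lambda_s^{-1}$-weighted cross terms arising from $b_2(\zeta^{n+1},p^{n+1})$ and from $\tfrac{1}{\alpha}b_2(\delta_t\zeta^{n+1},q)$ are the only places where $\lambda_s$ enters; I would treat them by Young's inequality so that the $\lambda_s^{-1}\|\zeta^{n+1}\|_{0,\Omega}^2$ part is absorbed into $a_5(\zeta^{n+1},\zeta^{n+1})$ and the rest is either $\lambda_s$-independent or vanishes as $\lambda_s\to\infty$. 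This is precisely what keeps $C,\tilde C$ independent of $\lambda_s$, consistently with Lemma~\ref{eq:stability-poro}. The term $\zeta^n$ coming from $\delta_t\zeta^{n+1}$ becomes the explicit datum $\|\zeta^n\|_{0,\Omega}$, whereas the previous-step pressure $p^n$ is subsumed in $\tilde R_4^n$.

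The decisive point for $\lambda_s$-robustness is that $\|\zeta^{n+1}\|_{0,\Omega}$ must \emph{not} be controlled through the degenerating form $a_5$, but through the inf-sup condition \eqref{inf-sup}: from \eqref{eq:uncoupled1}, $b_1(\zeta^{n+1},\bv)=F_3(\bv)-a_3(\bu^{n+1},\bv)$ for every $\bv\in\bH^1_\Gamma(\Omega)$, whence, dividing by $\|\bv\|_{1,\Omega}$, taking the supremum, and using $|F_3(\bv)|\le(\rho_s\|\bb\|_{0,\Omega}+\|\mathbf{R}_3\|_{0,\Omega})\|\bv\|_{1,\Omega}$ and $|a_3(\bu^{n+1},\bv)|\le 2\mu_s\|\bu^{n+1}\|_{1,\Omega}\|\bv\|_{1,\Omega}$, one gets $\beta\|\zeta^{n+1}\|_{0,\Omega}\le \rho_s\|\bb\|_{0,\Omega}+\|\mathbf{R}_3\|_{0,\Omega}+2\mu_s\|\bu^{n+1}\|_{1,\Omega}$. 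Combined with the Korn-based coercivity of $a_3$ and the Darcy coercivity $a_4(q,q)\ge(\kappa_1/\mu_f)\|q\|_{1,\Omega}^2$ from \eqref{elip_a_1}, this closes the bounds for $\|\bu^{n+1}\|_{1,\Omega}$, $\|p^{n+1}\|_{1,\Omega}$ and $\|\zeta^{n+1}\|_{0,\Omega}$ simultaneously, with a global stability constant of order $\min\{\mu_s/\Delta t,\alpha\kappa_1/2\mu_f,\beta^2\}$ — exactly the quantity in \eqref{def_TC} (the $\mu_s/\Delta t$ reflecting the rescaling of the momentum balance described above).

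Finally, to extract $\tilde C$ I would track the only term in which $c_p^{n+1}$ appears, namely $\tfrac{1}{\Delta t}\int_\Omega c_p^{n+1}p^{n+1}$ produced by $F_{4,c^{n+1}_p}(p^{n+1})$ (recall \eqref{eq:functionals} and \eqref{def:forms-simple}). Bounding it by Cauchy--Schwarz and Young's inequality, with the weight chosen so that the $\|p^{n+1}\|_{0,\Omega}^2$ part is absorbed into a fixed fraction of the Darcy coercivity, leaves a residual proportional to $\tfrac{\mu_f}{\kappa_1\Delta t^2}\|c_p^{n+1}\|_{0,\Omega}^2$ on the right-hand side of the energy inequality; dividing by the global coercivity constant and taking the square root reproduces the factor $\tilde C$ of \eqref{def_TC}, the prefactor $3\alpha/2$ accounting for the $\alpha$-rescaling and for splitting the coercivity among the several right-hand side groups. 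All the remaining contributions ($F_3$ through $\|\mathbf{R}_3\|_{0,\Omega}$ and $\|\bb\|_{0,\Omega}$, $\tilde F_4$ through $\|R_4\|_{0,\Omega}$ and $\|\tilde R_4^n\|_{0,\Omega}$, and $\|\zeta^n\|_{0,\Omega}$) collect into $C$, yielding \eqref{eq:SN}. I expect the main obstacle to be the simultaneous bookkeeping of the $\lambda_s^{-1}$-weighted indefinite terms, so as to keep $C,\tilde C$ genuinely independent of $\lambda_s$, together with the careful tuning of the Young weights needed to reproduce the exact constant \eqref{def_TC} rather than a merely qualitative bound.
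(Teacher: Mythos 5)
Your proposal is correct and follows essentially the same route as the paper: the paper's (terse) proof likewise tests \eqref{eq:uncoupled1}--\eqref{eq:uncoupled3} diagonally with $\bv=\bu^{n+1}$, $q=p^{n+1}$, $\psi=\zeta^{n+1}$, merely pre-scaling the three equations by $1/\Delta t$, $\alpha$ and $-1/\Delta t$ before adding, which is the same bookkeeping you accomplish through your unscaled subtraction and the tuning of Young weights. Your explicit appeal to the inf-sup condition \eqref{inf-sup} to bound $\Vert\zeta^{n+1}\Vert_{0,\Omega}$ uniformly in $\lambda_s$ is exactly the detail the paper omits but presupposes, as witnessed by the appearance of $\beta$ in \eqref{def_TC} and by the same step being made explicit in the uniqueness argument of Lemma \ref{uniqueness-poro}.
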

\begin{proof}
	The proof starts by multiplying the equations \eqref{eq:uncoupled1}, \eqref{eq:uncoupled2} and \eqref{eq:uncoupled3} by $\frac{1}{\Delta t}, \alpha$, and $-\frac{1}{\Delta t}$, respectively. Then, the estimate \eqref{eq:SN} follows after adding these resulting equations with $\bv:=\bu^{n+1}$, $q:=p^{n+1}$ and $\psi:=\zeta^{n+1}$, respectively, using {Cauchy--Schwarz} and Young inequalities. We omit further details. 
\end{proof}

 Now, we are in a position to establish the existence of a fixed point of the operator $\mathbf{T}$. This result is abridged in the following two lemmas.
\begin{lemma}\label{eq:bound_T}
	Assume that $\tilde{C}\hat{C}\leq\frac{1}{4}$, and
	\begin{equation*}
		\hat{C}\left\{\Vert c_p^n\Vert_{0,\Omega}+\Vert \tilde{R}_1^n\Vert_{0,\Omega} + \Vert R_1\Vert_{0,\Omega}+ C \Big\{\Vert\mathbf{R}_3\Vert_{0,\Omega}+\Vert\bb^{m+1}\Vert_{0,\Omega} + \Vert R_4\Vert_{0,\Omega}+\Vert \tilde{R}^n_4\Vert_{0,\Omega}+\Vert\zeta^{n}\Vert_{0,\Omega}\Big\} \right\}\leq \frac{r}{2},
	\end{equation*} where {$\tilde{C}, \hat{C}$ and $C$ are the constants specified in \eqref{def_TC}, \eqref{hatC} and \eqref{eq:SN}, respectively.} Then, the fixed-point operator $\mathbf{T}$ maps from $\mathcal{K}$ into itself.
\end{lemma}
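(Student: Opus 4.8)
The plan is to exploit the compositional structure $\mathbf{T}=\tilde{\mathbf{S}}\circ\mathbf{S}$ from \eqref{eq:def_T} and simply chain together the two \emph{a priori} bounds already established for the uncoupled solvers $\mathbf{S}$ and $\tilde{\mathbf{S}}$, being careful to track the norms across the composition and to separate the contributions coming purely from the data from the single term proportional to $\|c_p^{n+1}\|_{0,\Omega}$. So I would fix an arbitrary $c_p^{n+1}\in\mathcal{K}$, which by \eqref{eq:def_K_1} means $\|c_p^{n+1}\|_{1,\Omega}\le r$, and in particular $\|c_p^{n+1}\|_{0,\Omega}\le r$; the goal is then to show $\|\mathbf{T}(c_p^{n+1})\|_{1,\Omega}\le r$.

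First I would feed $c_p^{n+1}$ into the poroelastic solver and invoke the bound \eqref{eq:SN} to control the pair $(p^{n+1},\zeta^{n+1})=\mathbf{S}(c_p^{n+1})$, giving
\[
\|p^{n+1}\|_{1,\Omega}+\|\zeta^{n+1}\|_{0,\Omega}\le C\bigl\{\|\mathbf{R}_3\|_{0,\Omega}+\|\bb^{m+1}\|_{0,\Omega}+\|R_4\|_{0,\Omega}+\|\tilde R_4^n\|_{0,\Omega}+\|\zeta^{n}\|_{0,\Omega}\bigr\}+\tilde{C}\,\|c_p^{n+1}\|_{0,\Omega}.
\]
Using $\|c_p^{n+1}\|_{0,\Omega}\le r$, the only $r$-dependent part of this bound is exactly $\tilde{C}r$. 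Next I would substitute this into the single-time-level $\mathrm{H}^1$-stability bound for the reaction-diffusion solver $\tilde{\mathbf{S}}$ — the one-step counterpart of \eqref{bound-w1h-w2h-stability}, whose constant is $\hat{C}$ (cf. \eqref{hatC}), obtained by testing \eqref{eq:DR_1_FP} with $w_p=c_p^{n+1}$ and absorbing the gradient via coercivity of $a_1$. Since $\mathbf{T}(c_p^{n+1})=\tilde{\mathbf{S}}(p^{n+1},\zeta^{n+1})$ and that estimate only requires $\|p^{n+1}\|_{0,\Omega}\le\|p^{n+1}\|_{1,\Omega}$ and $\|\zeta^{n+1}\|_{0,\Omega}$ on the right-hand side, it reads
\[
\|\mathbf{T}(c_p^{n+1})\|_{1,\Omega}\le \hat{C}\bigl\{\|c_p^n\|_{0,\Omega}+\|\tilde R_1^n\|_{0,\Omega}+\|R_1\|_{0,\Omega}+\|p^{n+1}\|_{1,\Omega}+\|\zeta^{n+1}\|_{0,\Omega}\bigr\}.
\]

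Inserting the bound from the previous step and collecting terms splits the right-hand side into the purely data-driven quantity $\hat{C}\{\|c_p^n\|_{0,\Omega}+\|\tilde R_1^n\|_{0,\Omega}+\|R_1\|_{0,\Omega}+C\{\cdots\}\}$ plus the term $\hat{C}\tilde{C}\,r$. I would then close the argument using the two standing hypotheses: the data-driven quantity is $\le r/2$ by assumption, while $\hat{C}\tilde{C}\le 1/4$ gives $\hat{C}\tilde{C}\,r\le r/4$, whence $\|\mathbf{T}(c_p^{n+1})\|_{1,\Omega}\le r/2+r/4=3r/4\le r$ and therefore $\mathbf{T}(c_p^{n+1})\in\mathcal{K}$.

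This last lemma is essentially bookkeeping, since the genuine analytic effort already sits in Lemmas \ref{eq:stability-poro} and \ref{uncoupled-ADR} and in estimate \eqref{eq:SN}. The only point requiring care — and the likely main obstacle — is norm compatibility across the composition: the $\tilde{\mathbf{S}}$-estimate needs control of $p^{n+1}$ in $\mathrm{L}^2$, which is precisely furnished by the stronger $\mathrm{H}^1$ bound produced by $\mathbf{S}$, while the output of $\mathbf{T}$ must be measured in the $\mathrm{H}^1$ norm defining $\mathcal{K}$ rather than merely in $\mathrm{L}^2$. Hence one must verify that the $\tilde{\mathbf{S}}$-bound is genuinely an $\mathrm{H}^1$ estimate and that the smallness conditions $\tilde{C}\hat{C}\le 1/4$ and on the data are exactly what is needed to absorb the $r$-proportional term and keep the data term below $r/2$, respectively.
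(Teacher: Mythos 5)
Your proposal is correct and follows essentially the same route as the paper's proof: testing \eqref{eq:DR_1_FP} with $w_p=c_p^{n+1}$ and coercivity of $a_1$ to obtain the one-step $\mathrm{H}^1$ bound \eqref{eq:bound_H1} with constant $\hat{C}$, chaining it with the estimate \eqref{eq:SN} for $\mathbf{S}$, and closing via the two smallness hypotheses. The only cosmetic difference is that you make explicit the final arithmetic ($\|\mathbf{T}(c_p^{n+1})\|_{1,\Omega}\le r/2+\hat{C}\tilde{C}r\le 3r/4\le r$) and the harmless passage $\|p^{n+1}\|_{0,\Omega}\le\|p^{n+1}\|_{1,\Omega}$, which the paper leaves implicit.
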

\begin{proof}
	We begin by obtaining an $\mathrm{H}^1(\Omega)$-norm for the problem defined by $\mathbf{\tilde{S}}$ (cf. \eqref{eq:def_TS}). In fact, by taking  $w_p=c_p^{n+1}$ in \eqref{eq:DR_1_FP}, we readily see that
	\begin{align*}
	\begin{split}
	&\frac{1}{\Delta t} \Vert c_{p}^{n+1}\Vert^2_{0,\Omega} + \mathbf{D}^{min}_p\Vert\nabla c_{p}^{n+1}\Vert^2_{0,\Omega}\\
	&\qquad \leq \frac{1}{\Delta t}\int_{\Omega}c_p^{n+1}c_p^n+\int_{\Omega} (\frac{1}{\Delta t}\tilde{R}_1^n + R_1) c^{n+1}_p+ \beta_1\frac{1}{\Delta t} \int_{\Omega}{\zeta}^{n+1} c^{n+1}_p + \beta_2\frac{1}{\Delta t} \int_{\Omega}{p}^{n+1} c^{n+1}_p,
	\end{split}
	\end{align*}
	for a given pair $({p}^{n+1},{\zeta}^{n+1})\in  \mathrm{H}^1_\Sigma(\Omega)\times \mathrm{L}^2(\Omega)$, from which, we straightforwardly obtain 
	\begin{equation}\label{eq:bound_H1}
\Vert \mathbf{\widetilde{S}}({p}^{n+1},{\zeta}^{n+1})\Vert_{1,\Omega}=\Vert c_{p}^{n+1}\Vert_{1,\Omega}\leq \hat{C}\left\{\Vert c_p^n\Vert_{0,\Omega}+\Vert \tilde{R}_1^n\Vert_{0,\Omega} + \Vert R_1\Vert_{0,\Omega}+\Vert p^{n+1}\Vert_{0,\Omega}+\Vert \zeta^{n+1}\Vert_{0,\Omega}\right\},
	\end{equation} 
	and therefore, by using the definition of $\mathbf{T}$ (cf. \eqref{eq:def_T}), and applying the result given by \eqref{eq:bound_H1}, we get 
	\begin{align}\label{eq:prel_NT}
	\begin{split}
		&\Vert\mathbf{T}(c_p^{n+1})\Vert_{{1,\Omega}}:=\Vert\mathbf{\widetilde{S}}(\mathbf{S}_1(c_p^{n+1}),\mathbf{S}_2(c_p^{n+1}))\Vert_{1,\Omega}\\
		&\quad \leq \hat{C}\left\{\Vert c_p^n\Vert_{0,\Omega}+\Vert \tilde{R}_1^n\Vert_{0,\Omega} + \Vert R_1\Vert_{0,\Omega}+\Vert \mathbf{S}_1(c_p^{n+1})\Vert_{0,\Omega}+\Vert \mathbf{S}_2(c_p^{n+1})\Vert_{0,\Omega}\right\},
		\end{split}
	\end{align}
	{where
	\begin{equation}\label{hatC}
	    \hat{C}:=(\Delta t)^{-1}\max\{1,(\Delta t)^{-1},\beta_1,\beta_2\}\cdot \min\{(\Delta t)^{-1},\mathbf{D}^{min}_p\}^{-1}.
	\end{equation}}
	Finally, the desired result follows after substituting the estimate \eqref{eq:SN} into \eqref{eq:prel_NT}, and then, applying the assumption on data given at the statement
	of the present lemma.
\end{proof}

{We remark that there is no inconvenience with the second assumption on data given in Lemma \ref{eq:bound_T} since it depends on a constant $r$ that can be appropriately chosen (cf. \eqref{eq:def_K_1}). Moreover, the first smallness assumption given in Lemma \ref{eq:bound_T} is merely a theoretical condition to guarantee solvability of the continuous problem. However,  even though for the constants $\tilde{C}$ and $\hat{C}$ defined by \eqref{def_TC} and \eqref{hatC}, respectively, the condition might not always be satisfied (especially for extreme values of the model constants), the numerical experiments obtained in Section \ref{sec:results} show satisfactory results even without the theoretical constraints. The present analysis could be improved by using a different approach that circumvents the current restrictions on data, but we are not aware of such a strategy being applicable directly to this context. 
} 

\begin{lemma}\label{cor1}
 	The map $\mathbf{T}:\mathcal{K}\rightarrow\mathcal{K}$ is {continuous} and $\mathbf{T}(\mathcal{K})$ is relatively compact  in $\mathrm{L}^2(\Omega)$.
\end{lemma}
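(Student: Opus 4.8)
The plan is to verify the two hypotheses of Schauder's fixed-point theorem for $\mathbf{T}$ restricted to $\mathcal{K}$, exploiting that $\mathbf{T}=\tilde{\mathbf{S}}\circ(\mathbf{S}_1,\mathbf{S}_2)$ (cf. \eqref{eq:def_T}) is a composition of solution operators of \emph{linear} problems. The key structural observation is that the dependence of each sub-problem on its data is affine: the operator $\mathbf{S}$ sees $c_p^{n+1}$ only through the functional $F_{4,c_p^{n+1}}$ of \eqref{eq:functionals}, which enters via the $\mathrm{L}^2$-pairing $b_4$; likewise $\tilde{\mathbf{S}}$ depends on $(p^{n+1},\zeta^{n+1})$ only through $F_{1,\cdot},F_{2,\cdot}$, which are affine via the $\mathrm{L}^2$-pairings appearing in $b_3^*,b_5^*$. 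Consequently, the difference of two outputs solves the same variational system with right-hand side equal to the difference of the data, which is exactly what turns the a priori estimates into Lipschitz bounds.

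For continuity, I would take $c^{n+1}_{p,1},c^{n+1}_{p,2}\in\mathcal{K}$ and set $(p_i,\zeta_i):=\mathbf{S}(c^{n+1}_{p,i})$ and $\hat c_i:=\tilde{\mathbf{S}}(p_i,\zeta_i)$. By linearity the pair $(p_1-p_2,\zeta_1-\zeta_2)$ solves \eqref{eq:uncoupled1}--\eqref{eq:uncoupled3} with data proportional to $c^{n+1}_{p,1}-c^{n+1}_{p,2}$, so the stability bound of Lemma~\ref{eq:stability-poro} yields a Lipschitz estimate $\Vert p_1-p_2\Vert_{0,\Omega}+\Vert\zeta_1-\zeta_2\Vert_{0,\Omega}\le C\,\Vert c^{n+1}_{p,1}-c^{n+1}_{p,2}\Vert_{0,\Omega}$. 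Applying the same reasoning to $\tilde{\mathbf{S}}$ through Lemma~\ref{uncoupled-ADR} (equivalently, the estimate \eqref{eq:bound_H1} applied to the difference) gives $\Vert\hat c_1-\hat c_2\Vert_{1,\Omega}\le\hat C\,(\Vert p_1-p_2\Vert_{0,\Omega}+\Vert\zeta_1-\zeta_2\Vert_{0,\Omega})$. Chaining these two bounds and using $\Vert\cdot\Vert_{0,\Omega}\le\Vert\cdot\Vert_{1,\Omega}$ shows that $\mathbf{T}$ is Lipschitz from $\mathrm{L}^2(\Omega)$ into itself, hence continuous in the topology in which $\mathcal{K}$ is considered.

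For relative compactness, the estimate \eqref{eq:bound_H1} together with Lemma~\ref{eq:bound_T} already provides the uniform bound $\Vert\mathbf{T}(c^{n+1}_p)\Vert_{1,\Omega}\le r$ for all $c^{n+1}_p\in\mathcal{K}$, so $\mathbf{T}(\mathcal{K})$ is bounded in $\mathrm{H}^1(\Omega)$. The compact (Rellich--Kondrachov) embedding $\mathrm{H}^1(\Omega)\hookrightarrow\hookrightarrow\mathrm{L}^2(\Omega)$ then shows that $\mathbf{T}(\mathcal{K})$ is relatively compact in $\mathrm{L}^2(\Omega)$; Lemma~\ref{eq:bound_T} also guarantees $\mathbf{T}(\mathcal{K})\subseteq\mathcal{K}$, so the self-mapping requirement holds and Schauder's theorem will be applicable in the subsequent step.

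I expect the main (though mild) obstacle to be bookkeeping the topologies consistently: $\mathbf{T}$ is assembled from operators acting between spaces of different regularity, while continuity must be proven for the $\mathrm{L}^2$ topology in which $\mathcal{K}$ is closed, convex and bounded. The crucial fact that keeps this routine is precisely that the coupling functionals depend on their data only through $\mathrm{L}^2$-pairings, so passing differences through the stability estimates incurs no loss of derivatives; once this is recorded, the Lipschitz continuity and the $\mathrm{H}^1$-boundedness combine immediately via the compact embedding to yield the claim.
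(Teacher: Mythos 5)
Your proof is correct, and the two halves compare differently with the paper's argument. The relative-compactness half is identical: both you and the paper combine the uniform $\mathrm{H}^1(\Omega)$-bound from Lemma~\ref{eq:bound_T} with the Rellich--Kondrachov compact embedding $\mathrm{H}^1(\Omega)\hookrightarrow\mathrm{L}^2(\Omega)$. For continuity, however, you take a genuinely different and more self-contained route: the paper argues sequentially, taking $c^{n+1}_{p,k}\to c^{n+1}_p$ in $\mathrm{L}^2(\Omega)$ and deferring the convergence $\mathbf{T}(c^{n+1}_{p,k})\to\mathbf{T}(c^{n+1}_p)$ to the external reference \cite[Lemma 2.8]{verma21}, whereas you exploit directly that $\mathbf{S}$ and $\tilde{\mathbf{S}}$ are affine in their data --- the coupling enters only through the $\mathrm{L}^2$-pairings in $b_4$, $b_3^*$, $b_5^*$ --- so that differences of outputs solve the homogeneous systems driven by differences of inputs, and the stability estimates become Lipschitz bounds. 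This is quantitatively stronger than what the paper records: chaining your two estimates gives $\Vert\mathbf{T}(c^{n+1}_{p,1})-\mathbf{T}(c^{n+1}_{p,2})\Vert_{1,\Omega}\le \hat{C}\tilde{C}\,\Vert c^{n+1}_{p,1}-c^{n+1}_{p,2}\Vert_{0,\Omega}$, and under the standing hypothesis $\tilde{C}\hat{C}\le\frac14$ of Lemma~\ref{eq:bound_T} this shows $\mathbf{T}$ is in fact a contraction on $\mathcal{K}$ in the $\mathrm{L}^2$-metric, so Banach's fixed-point theorem would deliver existence \emph{and} uniqueness at once, short-circuiting both the Schauder step of Lemma~\ref{eq:existence_result} and (for the fixed-point equation itself) part of Lemma~\ref{uniqueness-poro}; the paper's sequential argument buys independence from the smallness condition at this particular step. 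One citation you should tidy: for the Lipschitz bound on $\mathbf{S}$ it is cleaner to difference the single-time-step estimate \eqref{eq:SN} (whose data terms, frozen from step $n$, cancel, leaving exactly the constant $\tilde{C}$ of \eqref{def_TC}) rather than the Gronwall-summed estimate \eqref{eq:CD1} of Lemma~\ref{eq:stability-poro}, which accumulates contributions over all previous time levels and obscures the constant.
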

\begin{proof}
We notice from the previous lemma, that $\mathbf{T}(\mathcal{K})$ is bounded in $\mathrm{H}^1(\Omega)$. In this way, the compact embedding of $\mathrm{H}^1(\Omega)$ into $\mathrm{L}^2(\Omega)$ together with boundedness of $\mathbf{T}(\mathcal{K})$  conclude that  $\mathbf{T}(\mathcal{K})$ is relatively compact in $\mathrm{L}^2(\Omega)$. On the other hand, for the continuity property, we let $\{{c}^{n+1}_{p,k}\}_{k} \in \mathcal{K}$ be a sequence such that ${c}^{n+1}_{p,k} \rightarrow {c}^{n+1}_{p}$ in $\mathrm{L}^2(\Omega)$ as $k\to\infty$, and as is known from \eqref{eq:def_T}, $c_{p,k}^{n+1}=\mathbf{T}({c}^{n+1}_{p,k})$. In this way,  proceeding as in \cite[Lemma 2.8]{verma21}, we obtain that $c_{p,k}^{n+1}\rightarrow \mathbf{T}({c}^{n+1}_p)$ in $\mathrm{L}^2(\Omega)$ as $k\to\infty$, concluding the proof.
\end{proof}

Finally, the following lemma concerning the existence of solution for problem \eqref{eq:DR_1}-\eqref{eq:poro_3}, is merely an application of Lemmas \ref{eq:bound_T} and \ref{cor1}, and the Schauder fixed-point theorem. 
\begin{lemma}\label{eq:existence_result}
The  semi-discrete in-time formulation \eqref{eq:DR_1}- \eqref{eq:poro_3} possesses at least one solution.
\end{lemma}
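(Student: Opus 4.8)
The plan is to cast the existence question as a fixed-point problem for the operator $\mathbf{T}$ defined in \eqref{eq:def_T} and to invoke the Schauder fixed-point theorem on the ball $\mathcal{K}$ from \eqref{eq:def_K_1}. Since solving the coupled system \eqref{eq:DR_1}--\eqref{eq:poro_3} is equivalent to finding a fixed point $c_p^{n+1}=\mathbf{T}(c_p^{n+1})$, essentially all that remains is to check the hypotheses of Schauder's theorem, most of which are already supplied by the preceding lemmas.

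First I would verify that $\mathcal{K}$ is a nonempty, convex, closed, and bounded subset of $\mathrm{L}^2(\Omega)$. Convexity is immediate, since $\mathcal{K}$ is a sublevel set of the norm $\|\cdot\|_{1,\Omega}$, and it is nonempty because $0\in\mathcal{K}$. Closedness in $\mathrm{L}^2(\Omega)$ follows from the weak lower semicontinuity of the $\mathrm{H}^1$-norm: any sequence in $\mathcal{K}$ converging in $\mathrm{L}^2(\Omega)$ is bounded in $\mathrm{H}^1(\Omega)$, hence admits a subsequence converging weakly in $\mathrm{H}^1(\Omega)$ to the same $\mathrm{L}^2$-limit, which therefore still satisfies the bound $\|\cdot\|_{1,\Omega}\le r$.

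Next I would assemble the two structural properties established earlier. Lemma \ref{eq:bound_T} guarantees, under the stated smallness conditions on the data and on the product $\tilde{C}\hat{C}$, that $\mathbf{T}$ maps $\mathcal{K}$ into itself, so the self-map property holds. Lemma \ref{cor1} provides both the continuity of $\mathbf{T}\colon\mathcal{K}\to\mathcal{K}$ and the relative compactness of $\mathbf{T}(\mathcal{K})$ in $\mathrm{L}^2(\Omega)$, the latter resting on the uniform $\mathrm{H}^1$-bound together with the compact embedding $\mathrm{H}^1(\Omega)\hookrightarrow\mathrm{L}^2(\Omega)$. With these ingredients in hand, Schauder's theorem applies and yields a fixed point $c_p^{n+1}\in\mathcal{K}$. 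Unfolding the definition \eqref{eq:def_T}, the triple $(\bu^{n+1},p^{n+1},\zeta^{n+1})=\mathbf{S}(c_p^{n+1})$ produced by the poroelastic solve of Lemma \ref{eq:stability-poro}, together with $c_l^{n+1}$ from the reaction-diffusion solve, then satisfies the full system \eqref{eq:DR_1}--\eqref{eq:poro_3}, completing the argument.

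I do not anticipate a genuinely hard step here, since the heavy lifting — unique solvability of each decoupled subproblem and the a priori bounds — has already been carried out in Lemmas \ref{eq:stability-poro}--\ref{cor1}. The only point demanding minor care is the topological status of $\mathcal{K}$ as a subset of $\mathrm{L}^2(\Omega)$ rather than $\mathrm{H}^1(\Omega)$: the fixed-point iteration must live in the weaker $\mathrm{L}^2$ topology, where compactness is available, while the bound defining $\mathcal{K}$ is expressed in the stronger $\mathrm{H}^1$-norm, so closedness has to be confirmed via lower semicontinuity as indicated above. The dependence on the smallness assumptions of Lemma \ref{eq:bound_T} is a limitation inherited from that lemma rather than an obstacle in the present step.
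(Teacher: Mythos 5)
Your proposal is correct and follows essentially the same route as the paper, which likewise obtains existence by applying the Schauder fixed-point theorem to the operator $\mathbf{T}$ on $\mathcal{K}$, using Lemma \ref{eq:bound_T} for the self-map property and Lemma \ref{cor1} for continuity and relative compactness. Your added verification that $\mathcal{K}$ is nonempty, convex, and closed in $\mathrm{L}^2(\Omega)$ (via weak lower semicontinuity of the $\mathrm{H}^1$-norm) is a careful detail the paper leaves implicit, but it does not change the argument.
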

\subsection{Uniqueness of weak solutions}\label{sec:uniq}
We establish the uniqueness of weak solutions through the following result.
\begin{lemma}\label{uniqueness-poro}
		The semi-discrete weak formulation  \eqref{eq:DR_1}-\eqref{eq:poro_3} has a unique solution.
\end{lemma}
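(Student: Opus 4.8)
The plan is to establish uniqueness by the standard device of taking two solutions and showing their difference vanishes, exploiting linearity of the underlying system. Suppose $(c_p^{n+1,1},c_l^{n+1,1},\bu^{n+1,1},p^{n+1,1},\zeta^{n+1,1})$ and $(c_p^{n+1,2},c_l^{n+1,2},\bu^{n+1,2},p^{n+1,2},\zeta^{n+1,2})$ both solve \eqref{eq:DR_1}--\eqref{eq:poro_3}. Since the forms $\tilde{a}_i,a_i,b_i,c_i^*$ and functionals $F_i$ are all \emph{linear} in the trial variables (this is the whole point of the linearisation in Section~\ref{sec:linearisation}), the difference $(\bar c_p,\bar c_l,\bar\bu,\bar p,\bar\zeta)$ of the two solutions satisfies the same system with zero right-hand sides, and with homogeneous initial data $\bar c_p^0=\bar c_l^0=0$, $\bar\bu^0=\cero$, $\bar p^0=0$, $\bar\zeta^0=0$. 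It then suffices to prove that the only solution of the homogeneous problem is the trivial one.

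First I would treat the reaction--diffusion block \eqref{eq:DR_1}--\eqref{eq:DR_2} and the poroelastic block \eqref{eq:poro_1}--\eqref{eq:poro_3} in a coupled energy estimate rather than through the fixed-point operators, since uniqueness can be read off directly from coercivity once the coupling terms are controlled. The key steps, in order, would be: test \eqref{eq:poro_1} with $\bv=\bar\bu^{n+1}$, \eqref{eq:poro_2} with $q=\bar p^{n+1}$ (suitably scaled by $\alpha$), and \eqref{eq:poro_3} with $\psi=\bar\zeta^{n+1}$, then add the three equations so that the off-diagonal forms $b_1(\bar\zeta,\bar\bu)$ and $b_2(\psi,\bar p)$ cancel in the usual Biot manner; simultaneously test \eqref{eq:DR_1} with $w_p=\bar c_p^{n+1}$ and \eqref{eq:DR_2} with $w_l=\bar c_l^{n+1}$. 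Invoking the coercivity and positivity bounds in \eqref{elip_a_1}, together with Korn's inequality and the positive-definiteness of $\bkappa,\bD_p,\bD_l$, all the diagonal contributions are nonnegative. For the time-derivative terms I would use the discrete-in-time identity $\int_\Omega X^{n+1}\delta_t X^{n+1}=\tfrac12\delta_t\|X^{n+1}\|^2+\tfrac12\Delta t\|\delta_t X^{n+1}\|^2$ recalled at the end of Section~\ref{sec:stability}, so that summing over time steps telescopes and the homogeneous initial data kills the boundary contributions.

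The remaining coupling terms, namely $b_3^*$ feeding $p,\zeta$ into the concentration equations and $b_4$ feeding $c_p$ into the pressure equation, must then be absorbed. Here I would bound each cross term by Cauchy--Schwarz and Young's inequality, splitting the product so that a small multiple of a coercive norm is retained on the left and the remainder is placed into an $L^2$-in-time sum on the right; this produces an inequality of the form $E^{n+1}\le C\,\Delta t\sum_{m}E^{m+1}$ for an energy functional $E$ collecting $\|\bar c_p\|^2,\|\bar c_l\|^2,\|\bar\bu\|_{1,\Omega}^2,\|\bar p\|^2,\|\bar\zeta\|^2$. A discrete Gr\"onwall inequality (as already used in the proof of Lemma~\ref{uncoupled-ADR}) then forces $E^{n+1}=0$ for all $n$, whence all five difference fields vanish and uniqueness follows. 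Alternatively, and more economically, since Lemma~\ref{eq:stability-poro} and Lemma~\ref{uncoupled-ADR} already give \emph{stability} estimates with data-dependent right-hand sides, uniqueness for the decoupled operators $\mathbf{S},\tilde{\mathbf{S}}$ is immediate; the task reduces to showing the fixed-point map $\mathbf{T}$ has a unique fixed point, which follows from a contraction argument under the same smallness condition $\tilde C\hat C\le\tfrac14$ used in Lemma~\ref{eq:bound_T}.

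The main obstacle I anticipate is controlling the cross-coupling terms $b_3^*$ and $b_4$ with constants that are robust, in particular independent of $\lambda_s$, so as to match the $\lambda_s$-independence asserted in Lemma~\ref{eq:stability-poro}; naively bounding the total-pressure coupling can reintroduce a spurious $\lambda_s^{-1}$ through $a_5$, and the Young-inequality splitting has to be arranged so that the $\zeta$-contributions are paired against the coercive total-pressure term rather than against the pressure diffusion term. A secondary difficulty is ensuring the contraction constant in the fixed-point route stays below one uniformly in $\Delta t$; if that fails, the direct coupled energy estimate with discrete Gr\"onwall is the safer path, since it does not require any smallness of $\tilde C\hat C$ and yields uniqueness unconditionally for the linear problem.
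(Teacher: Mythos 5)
Your overall architecture matches the paper's proof: it likewise takes two solutions (in fact with two \emph{different} data sets, so that the final estimate yields continuous dependence and uniqueness simultaneously), forms the differences $\mathcal{U}^{n+1},\mathcal{P}^{n+1},\bigchi^{n+1},\mathcal{C}_p^{n+1},\mathcal{C}_l^{n+1}$, runs a coupled energy estimate using the discrete-in-time identity, and closes with Gr\"onwall --- unconditionally, with no smallness hypothesis, exactly your preferred ``safer path''; the contraction alternative you mention is not used. Two steps of your sketch, however, do not survive as written. First, the test-function choice $(\bv,q,\psi)=(\bar\bu^{n+1},\alpha\bar p^{n+1},\bar\zeta^{n+1})$ does not produce the cancellation ``in the usual Biot manner'': the $b_1$ coupling does cancel between \eqref{eq:poro_1} and \eqref{eq:poro_3}, but the pressure equation \eqref{eq:poro_2} carries $\frac{1}{\alpha}b_2(\delta_t\bar\zeta^{n+1},\bar p^{n+1})$ whereas the constraint \eqref{eq:poro_3} carries $b_2(\bar\zeta^{n+1},\bar p^{n+1})$ --- a time-level (and scaling) mismatch that leaves an uncancelled term containing $\delta_t\bar\zeta^{n+1}$, hence a factor $1/\Delta t$ that your energy functional does not control, so the resulting Gr\"onwall absorption is not automatic. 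The paper avoids this by testing the momentum equation \eqref{eq:poro_1} with $\bv=\delta_t\mathcal{U}^{n+1}$ and treating \eqref{eq:poro_2}--\eqref{eq:poro_3} as in \cite[Lemma 2.2]{verma21}, so that the elastic energy telescopes in time and the $b_1,b_2$ couplings recombine into discrete time derivatives before summation.

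Second --- and this is the key idea your sketch is missing --- the robust control of the total-pressure difference comes from the inf-sup condition \eqref{inf-sup}: testing the difference of the momentum equations and dividing by $\|\bv\|_{1,\Omega}$ gives
\begin{equation*}
\beta\,\Vert\bigchi^{n+1}\Vert_{0,\Omega}\;\lesssim\;\mu_s\Vert\mathcal{U}^{n+1}\Vert_{1,\Omega}+\Vert\mathbf{R}_{3,1}-\mathbf{R}_{3,2}\Vert_{0,\Omega}+\Vert\bb_1-\bb_2\Vert_{0,\Omega},
\end{equation*}
with no $\lambda_s$ anywhere. Your proposed remedy --- pairing the $\zeta$-contributions against ``the coercive total-pressure term'' --- is precisely the degenerate pairing you yourself warn against, since by \eqref{elip_a_1} that coercivity is $a_5(\zeta,\zeta)=\lambda_s^{-1}\Vert\zeta\Vert_{0,\Omega}^2$ and vanishes as $\lambda_s\to\infty$. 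For bare uniqueness at a fixed finite $\lambda_s$ the non-robust route would still force the differences to vanish, so your plan can be repaired to prove the lemma as stated; but the paper's proof (consistently with Lemmas \ref{eq:stability-poro} and \ref{eq:stability}) deliberately keeps all constants $\lambda_s$-independent, and the inf-sup bound on $\bigchi^{n+1}$ is the step that makes this possible.
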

\begin{proof}
	We begin by defining two solutions $(\bu_1^{n+1},p_1^{n+1}, \zeta_1^{n+1}, c_{p,1}^{n+1},c_{1,l}^{n+1})$ and $(\bu^{n+1}_2,p^{n+1}_2, \zeta^{n+1}_2, c^{n+1}_{p,2}, c^{n+1}_{l,2})$ associated with initial data $\bb_1, \bu^{0}_1, p^{0}_1, \zeta^0_1$, $c^{0}_{p,1}, c^0_{l,1}, {R}_{1,1}, \tilde{R}^n_{1,1}, {R}_{2,1}, \tilde{R}^n_{2,1},\mathbf{R}_{3,1}, {R}_{4,1}, \tilde{R}^n_{4,1}$ and $\bb_2, \bu^{0}_2,$ $ p^{0}_2$, $\zeta^0_2, c^{0}_{p,2},$ $c^0_{l,2}, {R}_{1,2}$, $\tilde{R}^n_{1,2}, {R}_{2,2}, \tilde{R}^n_{2,2}, \mathbf{R}_{3,2}, \mathrm{R}_{4,2}, \tilde{R}_{4,2}$, respectively, and then
	\begin{gather*}
	\mathcal{U}^{n+1}= \bu^{n+1}_1- \bu^{n+1}_2,\;\; \mathcal{P}^{n+1} = p^{n+1}_1 - p^{n+1}_2, \;\; \bigchi^{n+1} = \zeta_{1}^{n+1} - \zeta_{2}^{n+1},\\
	\mathcal{C}_{p}^{n+1} = c_{p,1}^{n+1} - c_{p,2}^{n+1}, \;\; \mathcal{C}_l^{n+1} = c_{l,1}^{n+1} - c_{l,2}^{n+1}.\end{gather*}
	Therefore, taking advantage of the linearity of the involved forms, choosing $\bv=\delta_t\mathcal{U}^{n+1}$ in \eqref{eq:poro_1}, and then applying Cauchy-Schwarz and Young inequalities, and multiplying the resulting inequality by $\Delta t$ and summing over $n-1$, we finally obtain 
	\begin{align}\label{stability-uh-final}
	\begin{split}
	&\mu_s C_{k,1}\Vert \mathcal{U}^{n}\Vert^2_{1,\Omega} + \frac{\mu_s C_{k,1}\Delta t^2}{4}\sum_{m=0}^{n-1}\Vert\delta_t \mathcal{U}^{m+1}\Vert^2_{1,\Omega} \\
	&\qquad\qquad  \leq  C\Big\{\Vert\mathcal{U}^{0}\Vert^2_{1,\Omega} + \sum_{m=0}^{n-1}\Vert\bigchi^{m+1}\Vert_{0,\Omega}^2+\sum_{m=0}^{n-1} \Vert\bb_1-\bb_2\Vert_{0,\Omega}^2+\sum_{m=0}^{n-1} \Vert\mathbf{R}_{3,1}-\mathbf{R}_{3,2}\Vert_{0,\Omega}^2\Big\},
	\end{split}
	\end{align}
	where $C$ is a constant independent of $\Delta t$. On the other hand, by taking $q=\mathcal{P}^{n+1}$ and $\psi=\bigchi^{n+1}$ in \eqref{eq:poro_2} and \eqref{eq:poro_3}, respectively, and then, applying again the linearity of the bilinear forms and proceeding as in \cite[Lemma 2.2]{verma21}, we arrive at the following estimate
	\begin{align}\label{prelim_poro_1}
	\begin{split}
	&\frac{1}{2\lambda_s}\Vert\bigchi^{n}\Vert^2_{0,\Omega} + \frac{1}{2}\Big(1+\frac{\alpha}{\lambda_s}\Big)\Big(\Vert \mathcal{P}^{n}\Vert^2_{0,\Omega} + \Delta t^2 \sum_{m=0}^{n-1}\Vert \delta_t \mathcal{P}^{m+1}\Vert^2_{0,\Omega}\Big)+\frac{\kappa_1\Delta t}{2\mu_f}\sum_{m=0}^{n-1}\Vert \mathcal{P}^{m+1}\Vert^2_{1,\Omega}\\
	& \leq \frac{1}{2\lambda_s}\Vert\bigchi^{0}\Vert^2_{0,\Omega}+ \frac{1}{2}\Big(1+\frac{\alpha}{\lambda_s}\Big)\Vert \mathcal{P}^{0}\Vert^2_{0,\Omega}+ \left(\frac{(1+\alpha)^2}{2\lambda}+1\right) \sum_{m=0}^{n-1}\Vert \mathcal{P}^{m+1}\Vert^2_{0,\Omega}+\frac{1}{2}\sum_{m=0}^{n-1}\Vert \tilde{R}^m_{4,1}-\tilde{R}^m_{4,2}\Vert^2_{0,\Omega}\\
	&\;\;\;\;+\frac{\mu_f\Delta t}{2\kappa_1}\sum_{m=0}^{n-1}\Vert {R}_{4,1}-{R}_{4,2}\Vert^2_{0,\Omega}+\frac{1}{2}\sum_{m=0}^{n-1} \Vert\mathcal{C}_p^{m+1}\Vert^2_{0,\Omega}
	 + \frac{1}{2\mu_s C_{k,1}}\Vert\bigchi^{n+1}\Vert^2_{0,\Omega}+ \frac{\mu_s C_{k,1}}{2}\Vert\mathcal{U}^{n+1}\Vert^2_{1,\Omega}\\
	 &\;\;\;\;+\frac{1}{\mu C_{k,1}}\sum_{m=0}^{n-1}\Vert\bigchi^{m+1}\Vert^2_{0,\Omega}+\frac{\mu_s C_{k,1}\Delta t^2}{4}\sum_{m=0}^{n-1}\Vert\delta_t \mathcal{U}^{m+1}\Vert^2_{1,\Omega}.
	 \end{split}
	\end{align} 
	Now, for the diffusion-reaction problem, we apply the linearity of the bilinear forms in \eqref{eq:DR_1}-\eqref{eq:DR_2}, with test functions $ \mathcal{C}_{p}$ and $\mathcal{C}_l$, respectively, and then, proceeding similar to Lemma \ref{eq:stability-poro}, we get 
	\begin{align}\label{prelim_DR}
	\begin{split}
	&\| \mathcal{C}_p^{n} \|_{0, \Omega}^2 + \| \mathcal{C}_l^{n} \|_{0, \Omega}^2 + \Delta t^2 \sum_{m=0}^{n-1} (\| \delta_{t}\mathcal{C}_p^{m+1} \|_{0, \Omega}^2+ \| \delta_{t}\mathcal{C}_l^{m+1} \|_{0, \Omega}^2)  + D^{\min} \Delta t \sum_{m=0}^{n-1}  (|\mathcal{C}_p^{m+1}|^2_{1,\Omega} + |\mathcal{C}_l^{m+1}|^2_{1,\Omega} )\\
	&\;\;  \leq \;C \bigg( \| \mathcal{C}_p^{0} \|_{0, \Omega}^2 + \| \mathcal{C}_l^{0} \|_{0, \Omega}^2+\sum_{m=0}^{n-1}\Big(\Vert \tilde{R}^m_{1,1}-\tilde{R}^m_{1,2}\Vert_{0, \Omega}^2+\Vert \tilde{R}^m_{2,1}-\tilde{R}^m_{2,2}\Vert_{0, \Omega}^2\Big)+ \frac{\Delta t}{2}\sum_{m=0}^{n-1}\Big(\Vert R_{1,1}-R_{1,2}\Vert^2\Big.   \\
	&\quad\;\;+\Vert R_{2,1}-R_{2,2}\Vert_{0, \Omega}^2 \Big)+  (1 + \frac{\Delta t}{2}) \sum_{m=0}^{n-1} \Big(\|\mathcal{C}_p^{m+1}\|_{0,\Omega}^2 + \|\mathcal{C}_l^{m+1}\|_{0,\Omega}^2\Big) + \sum_{m=0}^{n-1}\Vert\bigchi^{m+1}\Vert^2_{0,\Omega} +\sum_{m=0}^{n-1}\Vert\mathcal{P}^{m+1}\Vert_{0,\Omega}^2\bigg). \end{split}
	\end{align}
where $D^{\mathrm{min}}:=\min\{D_p^{\mathrm{min}},D_l^{\mathrm{min}}\}$. 
Finally, thanks to the inf-sup condition \eqref{inf-sup}, it is possible to obtain a bound for by $\| \bigchi^{n+1}\|_{0,\Omega}$ {independent of $\lambda_s$}, and then, combining  \eqref{stability-uh-final} - \eqref{prelim_DR}, and then using Gronwall's lemma, it can be deduced that 
\begin{align*}
&\| \mathcal{U}^{n+1}\|_{1,\Omega} + \| \mathcal{P}^{n+1}\|_{0,\Omega} + \| \bigchi^{n+1}\|_{0,\Omega}
+ \| \mathcal{C}_p^{n+1} \|_{0, \Omega} + \| \mathcal{C}_l^{n+1} \|_{0, \Omega} +  \| \mathcal{P}\|_{l^2(H^1(\Omega))} +  \| \nabla \mathcal{C}_p \|_{l^2(L^2(\Omega))}  \\
&+ \| \nabla \mathcal{C}_l \|_{l^2(L^2(\Omega))} \le C\sqrt{\exp(C_1)}\biggl( \|\mathcal{U}^0 \|_{1,\Omega} + \|\mathcal{P}^0 \|_{0,\Omega} + \|\bigchi^0 \|_{0,\Omega} +	\| \mathcal{C}_p^0 \|_{0,\Omega} + \| \mathcal{C}_l^0 \|_{0,\Omega} + \sum_{m=0}^n \| \bb_1 - \bb_2\|_{0, \Omega}  \\
&+  \sum_{m=0}^{n}\Big(\Vert \tilde{R}^m_{1,1}-\tilde{R}^m_{1,2}\Vert^2+\Vert \tilde{R}^m_{2,1}-\tilde{R}^m_{2,2}\Vert^2\Big)+ \Delta t\Vert R_{1,1}-R_{1,2}\Vert_{0,\Omega}^2+\Delta t\Vert R_{2,1}-R_{2,2}\Vert^2_{0,\Omega} \\
& + \Vert\mathbf{R}_{3,1}-\mathbf{R}_{3,2}\Vert_{0,\Omega}^2+ \sum_{m=0}^{n}\Vert \tilde{R}^m_{4,1}-\tilde{R}^m_{4,2}\Vert^2_{0,\Omega}+\Delta t\Vert {R}_{4,1}-{R}_{4,2}\Vert^2_{0,\Omega}\biggr),
\end{align*}
from which, we can ensure the existence of at most one weak solution to the system  \eqref{eq:DR_1} - \eqref{eq:poro_3}.
\end{proof}
\subsection{Stability  of the linearised coupled problem}\label{sec:stab}
The following lemma establishes the continuous dependence on data for problem \eqref{eq:DR_1} - \eqref{eq:poro_3}. Its proof follows similar arguments to those used in Lemmas \ref{eq:stability-poro}, \ref{uncoupled-ADR} and \ref{uniqueness-poro}, and therefore we omit it here. 
\begin{lemma}\label{eq:stability}
	The solution $(c_p^{n+1},c_l^{n+1},\bu^{n+1}, p^{n+1}, \zeta^{n+1})\in \mathrm{H}^1(\Omega)\times \mathrm{H}^1(\Omega)\times \mathbf{H}_\Gamma^1(\Omega)\times\mathrm{H}_\Sigma^1(\Omega)\times \mathrm{L}^2(\Omega)$ of
	problem \eqref{eq:DR_1} - \eqref{eq:poro_3}  satisfies
	\begin{align*}
	\begin{split}
	&\Vert \bu^{n+1}\Vert_{1,\Omega} + \Vert p^{n+1}\Vert_{0,\Omega} + \Vert \zeta^{n+1}\Vert_{0,\Omega} + \Vert p\Vert_{\ell^2(H^1(\Omega))}+ \Vert c_{p}^{n+1}\Vert_{0,\Omega} + \Vert c^{n+1}_{l}\Vert_{0,\Omega}+\Vert \nabla c_p \Vert_{\ell^2(L^2(\Omega))}\\
	&\quad + \Vert \nabla c_l\Vert_{\ell^2(L^2(\Omega))}\leq \sqrt{\exp(C)} \Big\{ \Vert\bu^{0}\Vert_{1,\Omega} + \Vert p^{0}\Vert_{0,\Omega}+ \Vert \zeta^{0}\Vert_{0,\Omega}+\Vert c_{p}^{0}\Vert_{0,\Omega}+\Vert c_{l}^{0}\Vert_{0,\Omega}+\Vert\bb\Vert_{0,\Omega}\\
	&\quad +  \sum_{m=0}^{n}\Big(\Vert \tilde{R}^m_{1}\Vert_{0,\Omega}+\Vert \tilde{R}^m_{2}\Vert_{0,\Omega}+\Vert \tilde{R}^m_{4}\Vert_{0,\Omega}\Big)+\Vert\mathbf{R}_3\Vert_{0,\Omega}+ \Delta t\Vert R_{1}\Vert_{0,\Omega}+\Delta t\Vert R_{2}\Vert_{0,\Omega}+\Delta t\Vert R_{4}\Vert_{0,\Omega} \Big\},
	\end{split}
	\end{align*}
	where $C>0$ is a constant {independent of $\lambda_s$}.
\end{lemma}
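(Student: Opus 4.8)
The plan is to derive an a priori estimate by testing each equation of the coupled system with a suitably scaled copy of its own unknown, exploiting the coercivity and positivity bounds collected in \eqref{elip_a_1}, and then closing the argument with a discrete Gronwall inequality after summation over the time index; since unique solvability is already guaranteed by Lemmas \ref{eq:existence_result} and \ref{uniqueness-poro}, only this energy bound remains. First I would treat the poroelastic block \eqref{eq:poro_1}--\eqref{eq:poro_3} exactly as in Lemma \ref{eq:stability-poro}: choosing $\bv=\delta_t\bu^{n+1}$ in the momentum balance, $q=p^{n+1}$ in the Darcy equation, and $\psi=\zeta^{n+1}$ in the kinematic constraint, and combining the three identities with appropriate weights so that the off-diagonal forms $b_1$ and $b_2$ cancel. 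Korn's inequality furnishes coercive control of $\Vert\bu^{n+1}\Vert_{1,\Omega}$ through $a_3$, the uniform positive-definiteness of $\bkappa$ controls $\Vert p\Vert_{\ell^2(H^1(\Omega))}$ through $a_4$, and the discrete-in-time identity recalled above converts the terms $\tilde a_4(p^{n+1},q)$ into a telescoping energy contribution plus a nonnegative dissipation term $\tfrac12\Delta t\Vert\delta_t p^{n+1}\Vert^2$. The coupling term $-b_4(c_p^{n+1},q)=\tfrac1{\Delta t}\int_\Omega c_p^{n+1}q$ appearing on the right-hand side of \eqref{eq:poro_2} is simply bounded via Cauchy--Schwarz by $\Vert c_p^{n+1}\Vert_{0,\Omega}\Vert p^{n+1}\Vert_{0,\Omega}$ and retained as a data-like term to be absorbed by Gronwall at the end.

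Next I would treat the two reaction-diffusion equations \eqref{eq:DR_1}--\eqref{eq:DR_2} as in Lemma \ref{uncoupled-ADR}, testing with $w_p=c_p^{n+1}$ and $w_l=c_l^{n+1}$. The ellipticity of $\bD_p,\bD_l$ together with the discrete-in-time identity again yields energy-plus-dissipation control of $\Vert c_p^{n+1}\Vert_{0,\Omega}$, $\Vert c_l^{n+1}\Vert_{0,\Omega}$ and of $\Vert\nabla c_p\Vert_{\ell^2(L^2(\Omega))}$, $\Vert\nabla c_l\Vert_{\ell^2(L^2(\Omega))}$, while the coupling contributions $\beta_1 b_3^*(p^{n+1},\cdot)+\beta_2 b_3^*(\zeta^{n+1},\cdot)$ hidden in the functionals $F_{1,p^{n+1},\zeta^{n+1}}$ and $F_{2,p^{n+1},\zeta^{n+1}}$ are controlled, after Cauchy--Schwarz and Young, in terms of $\Vert p^{n+1}\Vert_{0,\Omega}$ and $\Vert\zeta^{n+1}\Vert_{0,\Omega}$.

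The crucial step, and the one I expect to be the main obstacle, is obtaining control of the total pressure $\Vert\zeta^{n+1}\Vert_{0,\Omega}$ that is \emph{uniform} in $\lambda_s$. The only coercivity available for $\zeta$ is $a_5(\zeta,\zeta)=\lambda_s^{-1}\Vert\zeta\Vert_{0,\Omega}^2$, which degenerates as $\lambda_s\to\infty$, so it cannot be used directly. Instead, exactly as invoked in the proof of Lemma \ref{uniqueness-poro}, I would recover $\zeta^{n+1}$ from the momentum equation \eqref{eq:poro_1} written as $b_1(\zeta^{n+1},\bv)=F_3(\bv)-a_3(\bu^{n+1},\bv)$ and apply the inf-sup condition \eqref{inf-sup} to obtain $\beta\Vert\zeta^{n+1}\Vert_{0,\Omega}\le \Vert F_3\Vert+2\mu_s\Vert\bu^{n+1}\Vert_{1,\Omega}$, a bound whose constant does not involve $\lambda_s$. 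This is what allows the final constant $C$ to be $\lambda_s$-independent, in line with the robustness already asserted in Lemma \ref{eq:stability-poro}.

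Finally I would add the poroelastic and reaction-diffusion estimates, sum over $m=0,\dots,n$, and observe that the mutual coupling terms --- namely the occurrences of $\Vert c_p^{m+1}\Vert_{0,\Omega}$ on the pressure side and of $\Vert p^{m+1}\Vert_{0,\Omega}$, $\Vert\zeta^{m+1}\Vert_{0,\Omega}$ on the species side, together with the zeroth-order self-terms produced by Young's inequality --- all appear with summation in $m$ on the right-hand side. Invoking the discrete Gronwall lemma then absorbs these terms and produces the stated bound with a single constant $C>0$ independent of $\lambda_s$. The only genuinely delicate bookkeeping is to ensure that the $\lambda_s^{-1}$-weighted contributions arising from $a_5$ and from $b_2(\delta_t\zeta^{n+1},q)$ in \eqref{eq:poro_2} are retained with the correct signs so that they are nonnegative (and hence harmless) on the left, while the $\lambda_s$-independent control of $\zeta$ comes solely from the inf-sup argument.
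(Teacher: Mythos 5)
Your proposal is correct and follows essentially the same route as the paper, which omits the proof of Lemma~\ref{eq:stability} precisely because it combines the energy arguments of Lemmas~\ref{eq:stability-poro} and~\ref{uncoupled-ADR} with the treatment in Lemma~\ref{uniqueness-poro} — testing each block with (scaled copies of) its own unknowns, bounding the coupling terms by Cauchy--Schwarz and Young, recovering the $\lambda_s$-independent control of $\zeta^{n+1}$ from the inf-sup condition \eqref{inf-sup} applied to the momentum equation rather than from the degenerate form $a_5$, and closing with the discrete Gronwall lemma. Your identification of the inf-sup step as the crux of the $\lambda_s$-robustness matches exactly what the paper invokes at the end of the proof of Lemma~\ref{uniqueness-poro}.
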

\begin{remark}\label{remark}
\hspace{1cm}
	\begin{itemize}
\item Although showing the solvability analysis of the linearised fully-continuous problem goes beyond the scope of this work, it is possible to extend the ideas presented here to establish that well-posedness  using appropriate choices of Sobolev spaces, and performing a passage to the limit adapting to our problem the results from, e.g., \cite{anaya18} (which focus on reaction--diffusion--Brinkman problems). 
\item The solvability analysis of the fully-discrete problem associated with \eqref{eq:DR_1}-\eqref{eq:poro_3} can be established similarly to the semidiscrete-in-time case. More precisely, with the finite elements spaces specified in \eqref{fe-spaces}, we can define an appropriate fixed-point operator and prove that it is well-defined thanks to the well-posedness of each uncoupled problem (see a general form in, e.g., \cite{quarteroni94}). Then, by employing the continuous dependence in the  fully-discrete case (which follows exactly as in Lemma \ref{eq:stability}) in combination with the analysis from \cite[Section 5.3]{anaya18}, we can prove the continuity of the fixed-point operator described above. Finally, the desired result follows from an application of   Brouwer's fixed--point theorem.
\item Given the approximation properties of the finite element spaces specified in \eqref{fe-spaces} (and recalled in, e.g., \cite{oyarzua16}), and following the steps given by \cite[Section 4]{verma21}, it is possible to derive an asymptotic $\mathcal{O}(h)$ convergence for
the proposed method. This is also verified for the nonlinear case, as shown numerically in Section~\ref{sec:results}. 
\item {The convergence properties are also robust with respect to incompressibility of the solid phase}. 
\item We stress that the arguments presented in this section are unfortunately not readily generalised to arbitrary hyperelastic materials, nor a generic initial guess for the Newton--Raphson loop. {In all of the numerical tests performed below, this has not shown to be a problem in practice. Still, the lack of a global existence result makes it is possible for the model to be increasingly difficult in the simulation of other scenarios of interest, such as pathological ones.}
\end{itemize}
\end{remark}

\section{{A} finite element formulation}\label{sec:fem}
\subsection{Discretisation of the nonlinear problem}
In order to define a Galerkin finite element method we denote by $\{\cT_{h}\}_{h>0}$ a shape-regular
family of partitions of 
$\bar\Omega$, conformed by tetrahedra (or triangles 
in 2D) $K$ of diameter $h_K$, with mesh size
$h:=\max\{h_K:\; K\in\cT_{h}\}$. Given an integer $k\ge1$ and a subset
$S$ of $\mathbb{R}^d$, $d=2,3$, by $\mathbb{P}_k(S)$ we will denote the 
space of polynomial functions defined locally in $S$ and being of total degree up to $k$. Let us also denote by $b_K:= \varphi_1\varphi_2\varphi_3$  a $\mathbb{P}_3$ bubble function in $K$, where $\varphi_1,\,\varphi_2\,,\varphi_3$ are the barycentric coordinates of the triangle $K$ (in 3D the bubble is a quartic polynomial). 
Then the finite-dimensional subspaces for the pathogen and leukocyte concentrations $\rW_h\subseteq \rH^1(\Omega)$, displacement $\bV_h\subseteq \bH_\Gamma^1(\Omega)$, 
porous fluid pressure  $\rQ_h\subseteq \rH_\Sigma^1(\Omega)$, and nominal porosity  $\Phi_h\subseteq \rL^2(\Omega)$ are defined, respectively, as follows 
\begin{align}	
  \nonumber	\rW_h&:=\{w_h\in C(\overline{\Omega}): w_h|_K\in\mathbb{P}_{1}(K)^d,\ \forall K\in\cT_{h}\},\\
  \nonumber	\bV_h&:=\{\bv_h\in \boldsymbol{C}(\overline{\Omega}): \bv_h|_K
  \in [\mathbb{P}_1(K)\oplus {\rm span}\{b_K\}]^d\ \forall K\in\cT_{h},\ \bv_h|_\Gamma = \cero\},\\
\label{fe-spaces}\rQ_h&:=\{q_{h}\in C(\overline{\Omega}): q_{h}|_K\in\mathbb{P}_{1}(K),\ \forall K\in\cT_{h},\ q_h|_\Sigma = 0\}, \\
\nonumber  \Phi_h&:=\{\psi_h\in C(\overline{\Omega}): \psi_h|_K\in\mathbb{P}_1(K),\ \forall K\in\cT_{h}\}.
\end{align}
The pair $(\bV_h,\Phi_h)$ is the well-known MINI-element, which is inf-sup stable
in the context of saddle-point Stokes equations in their velocity-pressure formulation \cite{cion19}.

Then the fully discrete problem arises from \eqref{eq:weak-linear} and for each 
time step $n$, we perform  inner Newton-Raphson iterations from $k=0,\ldots$ 
seeking 
 $(\delta c_{p,h}^{k+1},\delta c_{l,h}^{k+1},\delta\bu^{k+1}_h, \allowbreak \delta p^{k+1}_{h},\delta\phi^{k+1}_{f,h})\in \rW_h\times\rW_h\times\bV_h\times \rQ_h\times\Phi_h=:\mathbf{H}_h$ solutions 
 to the unsymmetric matrix system  
\begin{equation}\label{eq:fully-discrete}
\left[\begin{array}{ccccc}
 \cA_1&\cB_1' & \cB_2' & \cero & \cB_3' \\[1.5ex]
\cB_1 &  \cA_2 & \cB_4' & \cero & \cB_5' \\[1.5ex]
\cero &  \cero  &\cA_3 & \cC_1' &\cero \\[1.5ex]
\cC_2 &  \cero  & \cC_1 & \cA_4 & \cC_3'\\[1.5ex]
\cero &  \cero & \cC_4 & \cero & - \cA_5
\end{array}\right] 
\left[\begin{array}{c}
\delta C_{p}^{k+1}\\[1ex]
\delta C_{l}^{k+1}\\[1ex]
\delta U^{k+1}\\[1ex]
\delta P^{k+1}\\[1ex]
\delta \Phi^{k+1}_{f}
\end{array}\right]=
\left[\begin{array}{c}
\cF_{1,h}^{k,n} \\[1ex]  
\cF_{2,h}^{k,n} \\[1ex]  
\cF_{3,h}^{k} \\[1ex] 
\cF_{4,h}^{k,n} \\[1ex] 
\cF_{5,h}^{k} \end{array}
\right],
\end{equation}
where the entries $\delta C_{p}^{k+1}$, $\delta C_{l}^{k+1}$, $\delta U^{k+1}$, $\delta P^{k+1}$ and $\delta \Phi_f^{k+1}$ in the independent vector variable  are the vectors containing all internal degrees of freedom associated with the discrete incremental solutions  
for all fields, and 
the operators in calligraphic letters appearing in the coefficient matrix and load vector from \eqref{eq:fully-discrete} are 
induced by the corresponding bilinear forms and linear functionals in \eqref{def:forms}. The functionals with superscript $n$ 
on the right-hand side vector indicate that they also receive contributions from the backward Euler 
time-discretisation.  

\subsection{Schur complement based robust preconditioner}\label{sec:preconditioner}
The numerical solution of problem \eqref{eq:fully-discrete} through direct methods is not feasible for large systems, which is a natural consequence of considering finer meshes for 3D geometries, needed to better capture anatomical details. This is the standard scenario in which Krylov space methods are the most useful, more specifically a GMRES method due to the indefinite and non-symmetric nature of the problem \cite{saad1986}. Still, scalable solvers require the construction of a robust preconditioner that allows for the Krylov iterations to remain roughly constant whenever (i) the amount of processors is increased and (ii) the mesh is refined. In this section we propose a preconditioner based on a two-level nested Schur complement inspired by the multiphysics nature of the model. If we consider a block matrix $\mathbf M$ given by the general structure 
    \[ \mathbf M = \begin{bmatrix} \mathbf A & \mathbf B_1 \\ \mathbf B_2 & \mathbf C \end{bmatrix} ,\]
with $\mathbf A$ invertible, a Gauss elimination procedure yields
    \begin{equation}\label{eq:schur}
    \mathbf M = \begin{bmatrix} I & \mathbf 0 \\ \mathbf B_2\mathbf A^{-1} \end{bmatrix} \begin{bmatrix} \mathbf A & \mathbf 0 \\ \mathbf 0 & \mathbf C -\mathbf B_2\mathbf A^{-1}\mathbf B_1  \end{bmatrix} \begin{bmatrix} \mathbf I & \mathbf A^{-1}\mathbf B_1 \\ \mathbf 0 & \mathbf I \end{bmatrix}. \end{equation}
We note that if $\mathbf C$ is invertible, the same procedure can be applied with respect to it. Schur complement based preconditioners enjoy excellent theoretical properties, as the preconditioned system possesses at most 3 distinct eigenvalues \cite{murphy2000note}, implying that it converges in at most 3 iterations of a Krylov subspace method, but this is seldom true in practice as the Schur complement block $\mathbf S = \mathbf C -\mathbf B_2\mathbf A^{-1}\mathbf B_1$ is computationally expensive to compute. Two complementary strategies to circumvent this problem are (i) to consider the full/lower triangular/upper triangular/diagonal part of \eqref{eq:schur} and (ii) to approximate $\mathbf S$. In turn, two standard approaches for approximating $\mathbf S$ are the SIMPLE preconditioner given by $\mathbf S\approx \mathbf C -\mathbf B_2\text{diag}\,(\mathbf A)^{-1}\mathbf B_1$ and the block diagonal approximation $\mathbf S\approx \mathbf C$ (for other possibilities and their comparison see, e.g., the comprehensive review \cite{elman2008taxonomy}). 

On the first level of the proposed preconditioner, we split the variables into poroelastic and chemotaxis, denoted by $\text{poro}=(\bu, p, \phi)$ and $\text{chem}=(c_p, c_l)$ respectively, and use this to write the linearised problem as     
    \begin{equation}\label{eq:prec poro-chem} 
\mathbf J = \begin{bmatrix} \mathbf J_\text{poro} & \mathbf J_\text{poro, chem} \\ \mathbf J_\text{chem, poro} & \mathbf J_\text{chem} \end{bmatrix}, 
    \end{equation}
where each block is given by 
    \begin{align*}
        \mathbf J_\text{poro} &= \left[\begin{array}{ccccc}
            \cA_3 & \cC_1' &\cero \\[1.5ex]
            \cC_1 & \cA_4 & \cC_3'\\[1.5ex]
            \cC_4 & \cero & - \cA_5
            \end{array}\right], \quad 
        \mathbf J_\text{chem} = \left[\begin{array}{ccccc}
             \cA_1&\cB_1' \\[1.5ex]
            \cB_1 &  \cA_2 
            \end{array}\right], \quad 
        \mathbf J_\text{poro,chem} = \left[\begin{array}{ccccc}
            \cero &  \cero   \\[1.5ex]
            \cC_2 &  \cero  \\[1.5ex]
            \cero &  \cero 
            \end{array}\right], \quad 
        \mathbf J_\text{chem,poro} = \left[\begin{array}{ccccc}
            \cB_2' & \cero & \cB_3' \\[1.5ex]
            \cB_4' & \cero & \cB_5'
            \end{array}\right].
    \end{align*}
    
Using \eqref{eq:schur} we can write the inverse of \eqref{eq:prec poro-chem} as
    $$\mathbf J^{-1} = 
    \begin{bmatrix} \mathbf I & -\mathbf J_\text{poro}^{-1}\mathbf J_\text{poro,chem} \\ \mathbf 0 & \mathbf I \end{bmatrix}
     \begin{bmatrix} \mathbf J_\text{poro}^{-1} & \mathbf 0 \\ \mathbf 0 & (\mathbf J_\text{chem} -\mathbf J_\text{chem,poro}\mathbf J_\text{poro}^{-1}\mathbf J_\text{poro,chem})^{-1}  \end{bmatrix}
     \begin{bmatrix} I & \mathbf 0 \\ -\mathbf J_\text{chem,poro}\mathbf J_\text{poro}^{-1} & \mathbf 0 \end{bmatrix}. $$
Still, the application of this preconditioner requires applying $\mathbf J_\text{poro}^{-1}$, for which we consider an additional splitting between displacement and the pair (pressure, nominal porosity) denoted as $\bu-\Pi$, with $\Pi=(p,\phi_f)$. We can apply the same argument to the block $\mathbf J_\text{poro}$, now written as 
    \[ \mathbf J_\text{poro} = \begin{bmatrix} \mathbf J_{\bu} & \mathbf J_{\bu,\Pi} \\ \mathbf J_{\Pi, \bu} & \mathbf J_{\Pi} \end{bmatrix},\]
by making use of the invertibility of the displacement block $\mathbf J_{\bu}$, where the sub-blocks are given by
    \begin{align*}
        \mathbf J_{\bu} &= \cA_3, \qquad 
        \mathbf J_{\Pi} = \left[\begin{array}{ccccc}
            \cA_4 & \cC_3'\\[1.5ex]
            \cero & - \cA_5
            \end{array}\right], \qquad 
        \mathbf J_{\bu,\Pi} = \left[\begin{array}{ccccc}
            \cC_1' &\cero
            \end{array}\right], \qquad 
        \mathbf J_{\Pi,\bu} = \left[\begin{array}{ccccc}
            \cC_1 \\[1.5ex]
            \cC_4 
            \end{array}\right].
    \end{align*}
 This idea has been applied in \cite{deparis2016facsi} for an FSI problem, and in \cite{kirby2018solver} for the Boussinesq equations. The equations governing $(c_p,c_l)$ are of parabolic type and so their approximation does not pose a challenge, which indicates that the main difficulty lies in the poroelastic block approximation. Motivated by this, the proposed preconditioner is given by the following components:
\begin{itemize}
    \item On the first level we consider a lower-triangular Schur decomposition and on the second level we consider a full one.
    \item The ``chem'' block is approximated by the action of the HYPRE-BoomerAMG \cite{yang2002boomeramg} preconditioner. 
    \item Both blocks in the second level are approximated by an inexact GMRES solver preconditioned by an additive Schwarz method with a direct solver in each subdomain (using the MUMPS library \cite{amestoy2000mumps}).
\end{itemize}
 The inexactness is given by a relative tolerance of $0.1$, and we highlight that an inexact solver in the sub-blocks gives rise to a preconditioner that changes from one iteration to the next one. This requires the use of a flexible GMRES algorithm (fGMRES) \cite{saad1993flexible}.  The use of an additive Schwartz preconditioner, or in general a domain decomposition one, alleviates the deteriorating scalability of AMG preconditioners for higher-order elements, in this case required by the displacement to satisfy an appropriate discrete inf-sup condition. 
 
 We close this section by noting that the choice of the type of Schur decomposition is not arbitrary. For this, we consider exact, lower triangular Schur complement preconditioners at both levels. In this way, the preconditioner adopts the following form 
\[ \mathbf P = \begin{bmatrix}
    \mathbf P_\text{poro} & \mathbf 0 \\
    \mathbf J_\text{chem,poro} & \mathbf S_\text{chem}
    \end{bmatrix}, \]
where
    \begin{gather*}
        \mathbf P_\text{poro} = \begin{bmatrix}
        \mathbf J_{\bu} & \mathbf 0 \\
        \mathbf J_{\Pi,\bu} & \mathbf S_{\Pi} 
        \end{bmatrix},  \quad 
        \mathbf S_{\Pi} = \mathbf J_{\Pi} - \mathbf J_{\Pi,\bu}\mathbf J_{\bu}^{-1}\mathbf J_{\bu,\Pi}, \quad 
        \mathbf S_\text{chem} = \mathbf J_\text{chem} -\mathbf J_\text{chem,poro}\mathbf J_\text{poro}^{-1}\mathbf J_\text{poro,chem}. 
    \end{gather*}
After algebraic manipulations, it can be seen that the preconditioned system is given by 
\begin{equation}\label{eq:preconditioned system}
    \mathbf P^{-1}\mathbf J = \begin{bmatrix}
        \mathbf P_\text{poro}^{-1}\mathbf J_\text{poro} & \mathbf P^{-1}_\text{poro}\mathbf J_\text{poro,chem} \\
        \mathbf S^{-1}_\text{chem}\mathbf J_\text{chem,poro}(\mathbf I - \mathbf P^{-1}_\text{poro}\mathbf J_\text{poro}) & \mathbf S^{-1}_\text{chem}(\mathbf J_\text{chem} -\mathbf J_\text{chem,poro}\mathbf P_\text{poro}^{-1}\mathbf J_\text{poro,chem})
    \end{bmatrix},
\end{equation}
where
\[ \mathbf P_\text{poro}^{-1}\mathbf J_\text{poro} = \begin{bmatrix}
            \mathbf I & \mathbf J_{\bu}^{-1}\mathbf J_{\bu,\Pi} \\
            \mathbf 0 & \mathbf I
        \end{bmatrix}.\]
        
The preconditioned system does not exhibit the expected property of having a small number of eigenvalues. This is mainly due to the use of a  preconditioner instead of an exact inverse, where $\mathbf P_\text{poro} \neq \mathbf J_\text{poro}$. If such blocks coincide, then we would have

\begin{equation}\label{eq:preconditioned system exact}
    \mathbf P^{-1}\mathbf J = \begin{bmatrix}
        \mathbf I & \mathbf J^{-1}_\text{poro}\mathbf J_\text{poro,chem} \\
        \mathbf 0 & \mathbf I
    \end{bmatrix},
\end{equation}
which justified the use of an exact Schur preconditioner at the second level. This results in a preconditioned problem with only one eigenvalue.



\section{Computational experiments}\label{sec:results}
We now turn to the presentation of numerical examples serving to illustrate the performance of the {finite element} scheme and to examine further the main features of the model. All routines have been implemented using the open source finite element library \texttt{FEniCS} \cite{alnaes15}. A fixed tolerance of $10^{-6}$ is used on the residuals for the convergence criterion of the {Newton--Raphson} iterative algorithm. {We highlight that the error control of multiphysics problems is not trivial, for example in \cite{borregales2018robust} as an error they use the norm of the increment, whereas in \cite{white2019two} they use the norm of the residual. We prefer the norm of the residual, as having an increment going to zero might mean stagnation and not convergence. Still, this can be improved by checking the error in each physics separatedly.}

\subsection{Sensitivity analysis of model parameters}\label{sec:sensitivity}

This section presents the influence of some parameters on the dynamics of the proposed model to highlight the coupling mechanisms between the poroelastic {component}~\eqref{eq:poroe} and the advection-reaction-diffusion {component}~\eqref{eq:immune} describing pathogens and immune system dynamics, respectively. The analyses were performed in a one dimensional version of the {fully coupled model given by equations~\eqref{eq:poroe}-\eqref{eq:immune} considering a domain $\Omega \in [0,8]$ cm.} {An initial pathogen concentration of $c_{p,0} = 0.001$ was considered in the middle of the domain $x \in [3.95,4.05]$ to start the infection, whereas in the remaining domain $c_{p,0} = 0$ was used}. Throughout the domain we also consider {the following initial conditions:} $c_{l,0} = 0.003$, {$\phi_{0} = 0.2$,} and $p_0 = 0$. The following boundary conditions were considered: $\bu = \cero$ on the left {at $x=0$}, and $p = 0$ was prescribed at the right end {at $x=8$}\,cm of the domain. 

\begin{table}[!t]
	\centering
	\resizebox{\columnwidth}{!}{
	\begin{tabular}{lllc|lllc}
		\toprule
		\textbf{Parameter} & \textbf{Units} & \textbf{Description}     & \textbf{References}    & \textbf{Parameter} & \textbf{Units } & \textbf{Description}    & \textbf{References} \\
		\midrule
		$E=60$                                          & ${kg}/{cm\, s^2}$     & Young modulus      & estimated                   
		   & $\lambda_{lp} = 1.5$                            & ${1}/{d \ c}$      & Phagocytosis rate    & estimated                    \\
		$\nu = 0.35$                                    & $-$                   & Poisson coefficient  &  estimated              
		   & $\bar{\lambda}_{lp} = \lambda_{lp}/\phi_0$      & ${1}/{d \ c}$      & Relative phagocytosis rate   &                  \\ 		
		$\lambda_s$                                     & ${kg}/{cm\,s^2}$      & First Lam\'e parameter   &               
		   & $\lambda_{pl} = 7.1$                                       & ${1}/{d \ c}$      & Leukocytes migration rate          & estimated     \\ 
		$\mu_s$                                         & ${kg}/{cm\,s^2}$      & Shear module     &                    
		  & $\bar{\lambda}_{pl} = \lambda_{pl}/\phi_0$                            & ${1}/{d \ c}$             & Relative leukocytes migration      &      \\ 
		$\rho_f = 1$                                     & {${kg}/{cm^3}$}       & Fluid phase density  &  estimated               
		  & $\pi_i = 10$                                    & $mmHg$                & Interstitial oncotic pressure    & \cite{phipps11}       \\ 
		$\rho_s = 2e^{-3}$                              & ${kg}/{cm^3}$         & Solid phase density                 & \cite{barnafi2021multiscale}
		  & $\pi_c = 20$                                    & $mmHg$                & Capillary oncotic pressure    & \cite{phipps11}           \\ 
		$\alpha = 0.25$                                 & $-$                   & Biot modulus                       & estimated 
		  & $\sigma_0 = 0.91$                               & $-$                   & Osmotic reflection coefficient      & \cite{phipps11}    \\
		$\phi_0 = 0.2$                                  & $-$                   & Initial fluid phase                 & \cite{basser92}
		  & $L_{bp} = 1e4$                                  & ${1}/{c}$             & Pathogen influence on permeability   & estimated   \\
		$D_p = 1e^{-3}$                                 & ${cm^2}/{d}$          & Pathogen diffusion       & estimated
		  & $P_c = 20$                                      & $mmHg$                & Capillary pressure                & \cite{phipps11}      \\
		$\bar{D_p} = D_p/\phi_0$                        & ${cm^2}/{d}$          & Relative pathogen diffusion   &
		  & $L_{p0} = 3.6e^{-8}$                            & ${cm}/{s \ mmHg}$  & Hydraulic permeability             & \cite{phipps11}     \\ 
		$D_l = 5e^{-2}$                                 & ${cm^2}/{d}$          & Leukocyte diffusion           & estimated
		  & $\ell_0 = 6.82e^{-5}$                           & ${1}/{s}$             & Normal lymphatic flow             & estimated      \\ 
		$\bar{D_l} = D_l/\phi_0$                        & ${cm^2}/{d}$          & Relative leukocyte diffusion  &
		  & $k_m = 6.5$                                     & $mmHg$                & Half life of lymphatic flow         & estimated    \\ 
		$\mathcal{X} = 1e^{-2}$                         & ${cm^2}/{d \ c}$   & Chemotaxis                              & estimated
		  & $nHill = 1$                                     & $-$                   & Hill coefficient                   & estimated     \\ 
		$\bar{\mathcal{X}} = \mathcal{X}/\phi_0$        & ${cm^2}/{d \ c}$   & Relative chemotaxis  &                    
		  & $\mbox{V}_{max} = 200$                          & $-$                   & Max lymphatic flow                 & estimated     \\
		$\gamma_p = 1.2e^{-1}$                          & ${1}/{d}$             & Pathogen reproduction rate              & estimated
		  & $K = 2.5e^{-7}$                                 & ${cm^2}/{s \ mmHg}$& Permeability                     & \cite{phipps11}       \\	
		$\bar{\gamma_p} = \gamma_p/\phi_0$              & ${1}/{d}$             & Relative pathogen reproduction rate     & 
		  & $(S/V) = 174$                                   & $1/cm$                & Vessel area per volume unit           & \cite{phipps11}  \\
		\bottomrule
	\end{tabular}
	}
	\caption{Example 1: Reference parameters of the model to be used in Section~\ref{sec:sensitivity}.}\label{tab:parameters}
\end{table}

A simplified sensitivity analysis varying one parameter at a time with respect to the reference parameters reported in Table~\ref{tab:parameters} was performed. We limit the presentation of these results to only a few parameters and variables divided into three cases that highlight the couplings of the model. Young's modulus $E$, associated with the mechanical part, was investigated for the first case. In the second and third cases, associated with the immune response of the model, both phagocytosis rate, $\lambda_{lp}$, and pathogen's reproduction rate, $\gamma_P$, were studied. These parameters were chosen because they better highlighted the couplings in the model. They are also related to critical biological responses depending on their values. Changes in the value of $\gamma_P$, for example, can be associated with pathogens with distinct proliferation abilities; changes in the value of $\lambda_{lp}$ can describe the ability of leukocytes in dealing with the invading pathogen; whereas changes in $E$ represent tissues with different stiffness.

In all the three cases, the investigated parameter ($E$, $\lambda_{lp}$, or $\gamma_P$) had its value doubled and halved with respect to the reference value reported in Table~\ref{tab:parameters}. To study these three scenarios, we assume that pathogens are responsible for triggering the inflammatory response. For this reason, the instant in which pathogens' concentration reaches its peak is used as a reference to collect data from the other variables of interest. 

Figure~\ref{1d-analysis} shows the results of the sensitivity analysis. The time in which pathogens concentration reaches its maximum was distinct for each scenario only for the analysis of the influence of pathogen's reproduction rate (cf. Figure~\ref{1d-analysis}, third line), occurring after 12 days, six days, and 24 days, respectively, since the start of infection. For the analysis of $\lambda_{lp}$ and $\lambda_{pl}$, the instant of time at which the pathogens reach the maximum was $12$ days since the start of infection.

\begin{figure}[!t]
\begin{center}
\includegraphics[width=1.0\textwidth]{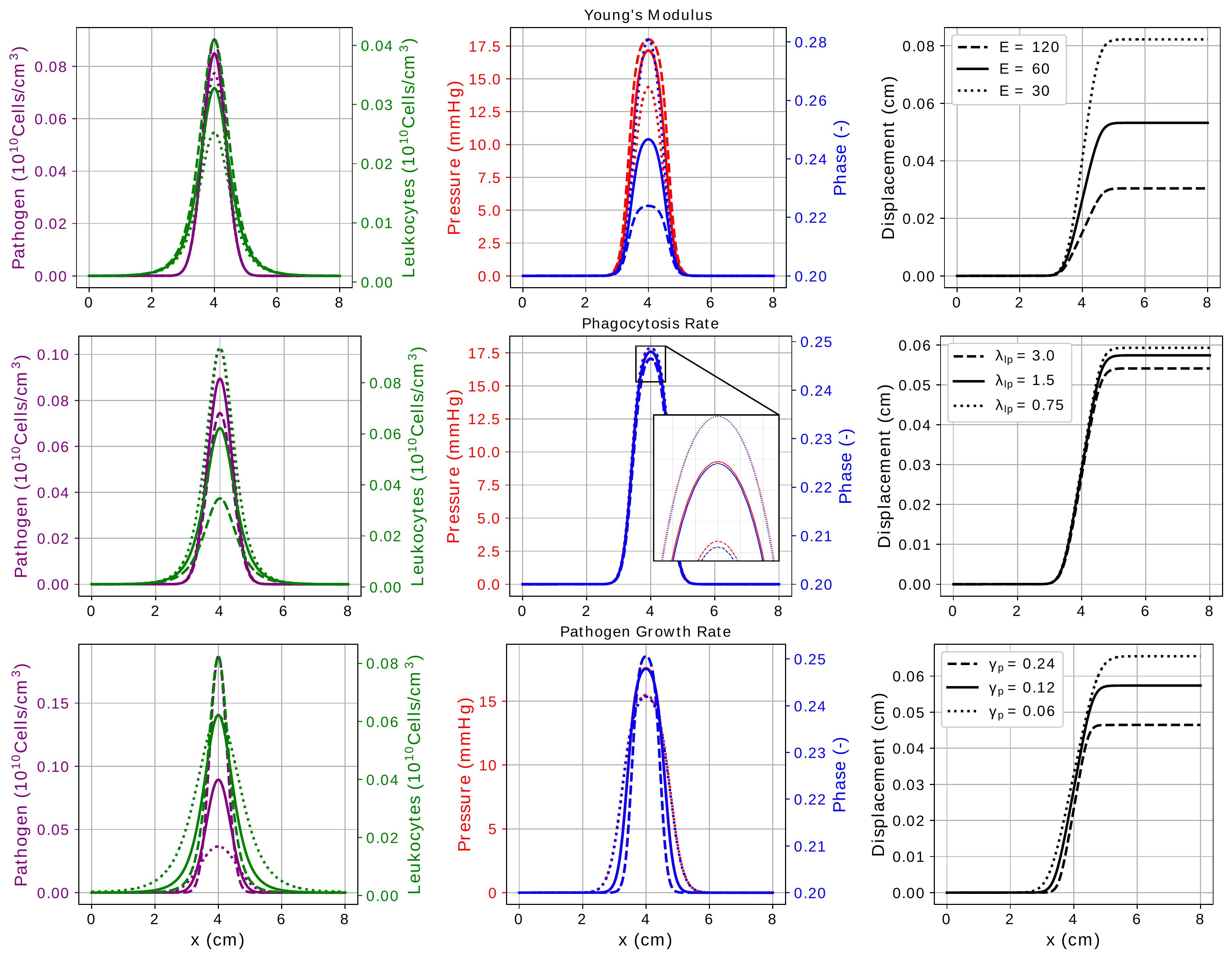}
\end{center}
\caption{Example 1: The interaction of immune response and mechanical parts of the model is evaluated using Young's modulus ($E$, first line), phagocytosis rate ($\lambda_{lp}$, second line) and pathogen reproduction rate ($\gamma_P$, last line) at the time instant in which the peak pathogen concentration is reached.  For each of these three parameters,  three scenarios are evaluated and plotted as distinct curves in each graphic: $E = 60$ (scenario 1,  solid line), $E = 120$ (scenario 2, dashed line) and $E = 30$ (scenario 3, dotted line); $\lambda_{lp} = 1.5$ (scenario 1,  solid line), $\lambda_{lp} = 3.0$ (scenario 2,  solid line) and $\lambda_{lp} = 0.75$ (scenario 3,  solid line); $\gamma_P = 0.12$ (scenario 1, solid line), $\gamma_P = 0.24$ (scenario 2, dashed line), $\gamma_P = 0.06$ (scenario 3, dotted line). The graphics in the left and in the middle {columns} are composed of two ordinate $y-$axes. {In the first column}, the left $y-$axis represents the concentration of pathogens and the right $y-$axis the concentration of leukocytes. {The second column} presents the pressure field and the fraction of fluid phase, respectively, on the left and right $y-$axes.  {The third column} represents the deformation field on a single $y-$axis. All figures represent, on the $x-$axis, the tissue size in centimetres.}\label{1d-analysis}
\end{figure} 

The plots in the first row of Figure~\ref{1d-analysis} presents the results for varying tissue stiffness. A stiffer tissue will imply a lower capacity to deform and accumulate fluid. With a smaller amount of fluid in the infected region, the concentration of pathogens and leukocytes in the tissue increases (Figure~\ref{1d-analysis}, {top left}), i.e., the same amount of pathogens in the region is divided by a smaller fluid volume.  

The second row of panels in Figure~\ref{1d-analysis} presents the results for varying the phagocytosis rate, $\lambda_{lp}$. A reduction in $\lambda_{lp}$ value can be related to a scenario in which the immune system is ineffective in dealing with the invading pathogen. This could be explained by immunodeficiency, for example. In this context, the concentration of pathogens will reach a higher and faster peak of infection for smaller values of $\lambda_{lp}$. The lower capacity of phagocytosis by leukocytes leads to a considerable increase in the concentration of pathogens (Figure~\ref{1d-analysis}, {centre left}). The increase in pathogens' concentration induces an increase in the capillary permeability, which allows a more significant accumulation of fluid in the infected region (Figure~\ref{1d-analysis} {centre middle}). The accumulation of fluid in the region causes the pressure to increase. As a consequence of these events, the deformation suffered by the tissue is greater than the one observed for higher values of $\lambda_{lp}$ (Figure~ \ref{1d-analysis} {centre right}).

Note that an increase in $\gamma_P$ can be interpreted as a pathogen that can reproduce faster due to its intrinsic biological characteristics or due to the presence of favourable conditions (abundance of nutrients or temperature, for example). In fact, as the bottom row of Figure~\ref{1d-analysis} shows, the peak of infection was higher and faster for larger $\gamma_P$ values, as expected. However, counter-intuitively, high values of $\gamma_P$ are associated with small deformations and small regions of oedema. As a result of more pathogens, the capillary permeability increases to allow more leukocytes to enter into the tissue (Figure~\ref{1d-analysis},  {bottom left}). The rapid increase in the concentration of leukocytes contained the infection quickly, preventing pathogens from remaining in the tissue long enough to spread. Since the time pathogens stay in the tissue is short, as well as the region in which they spread, the deformation suffered by the tissue also decreases (Figure~\ref{1d-analysis},  {bottom right}). 

In the third scenario, with the small value of $\gamma_P = 0.06$, the peak of infection only occurs 24 days after the invasion begins, which leads to a weak inflammatory response. Thus, the concentration of leukocytes is reduced. The low leukocyte concentration results in a long time to remove pathogens that can spread over a larger region. The weak immune response results in more deformation when compared to the cases with faster pathogen dynamics, which highlights the couplings and strong nonlinearities in the model. 
 
For this test we have also used the FEniCS environment along with the direct solver MUMPS.

\subsection{Example 2: Compression and drainage of a poroelastic sample}

\begin{figure}[t!]
\begin{center}
\includegraphics[width=0.325\textwidth]{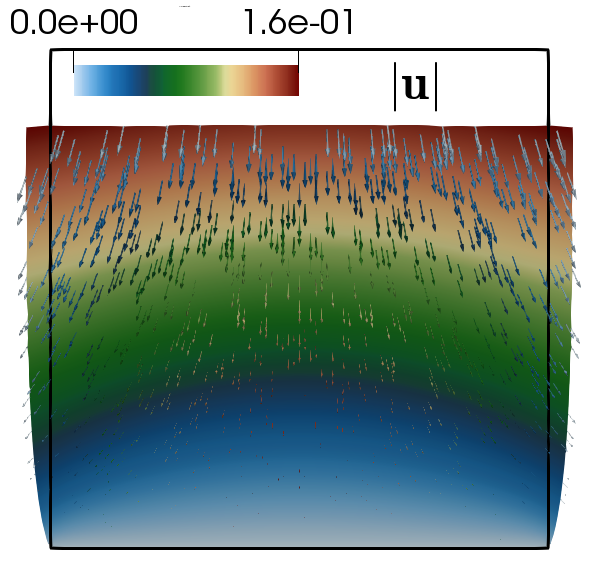}
\includegraphics[width=0.325\textwidth]{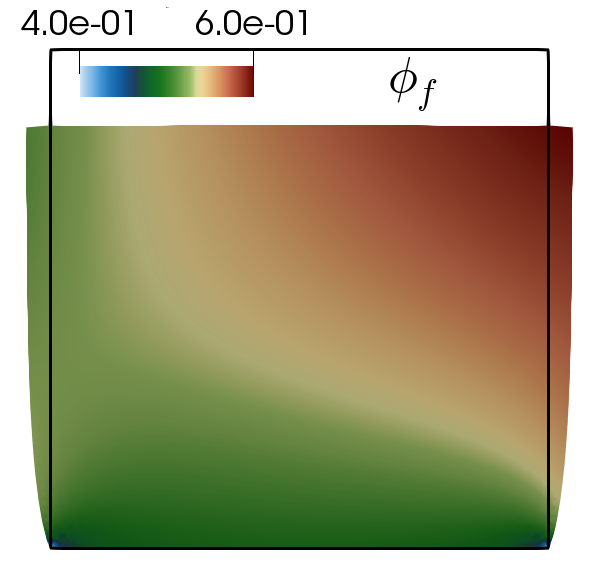}
\includegraphics[width=0.325\textwidth]{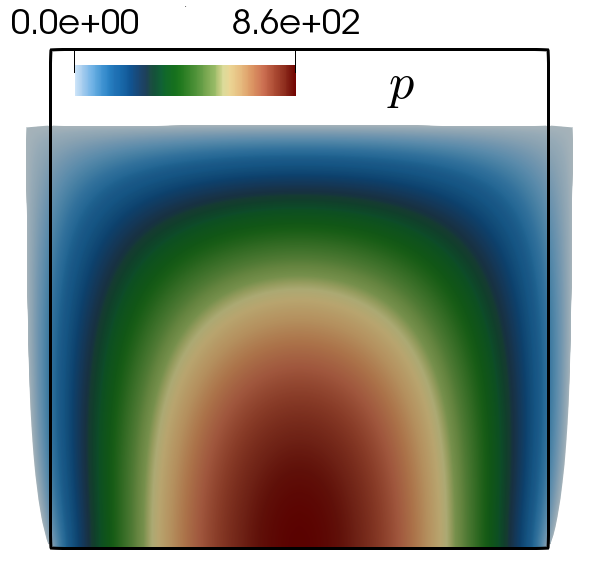}\\
\includegraphics[width=0.325\textwidth]{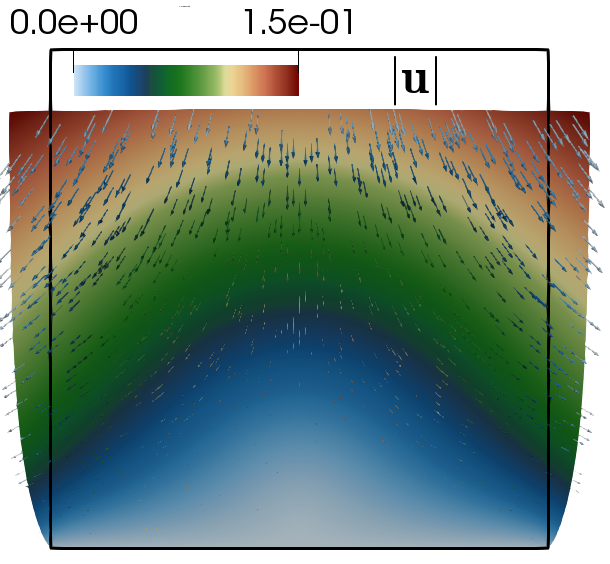}
\includegraphics[width=0.325\textwidth]{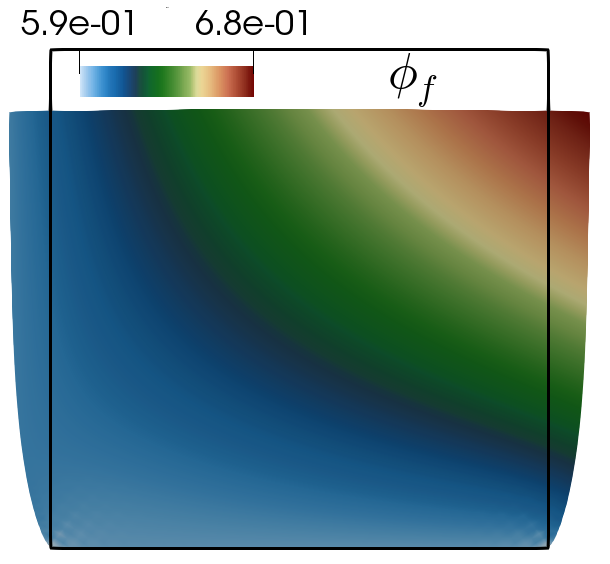}
\includegraphics[width=0.325\textwidth]{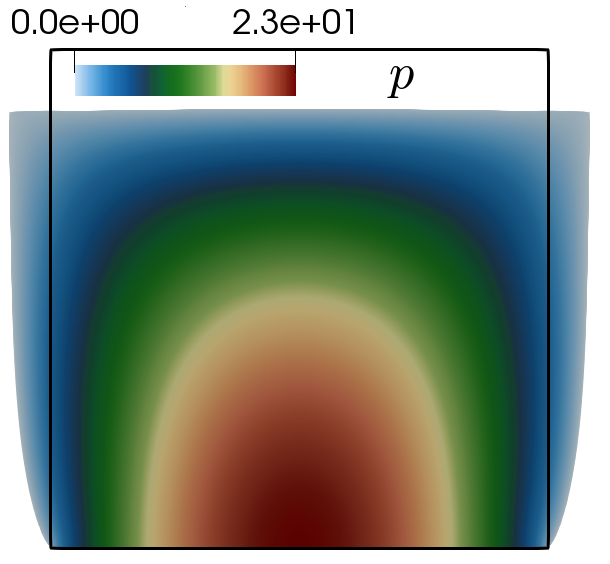}
\end{center}
\vspace{-3mm}
\caption{Example 2: Compression and drainage of a porous structure. Approximate displacement magnitude, Lagrangian porosity, and fluid pressure, on the deformed configuration and computed at $t = 0.5$ with $\nu = 0.33$ (top) and $\nu = 0.495$ (bottom).}
\label{fig:ex01a}
\end{figure} 

With the aim of illustrating the coupling between poroelastic deformation and fluid flow using the formulation \eqref{eq:u}-\eqref{eq:phi} and in the absence of the immune system chemotaxis, we simulate the incremental compression of a box $\Omega = (0,1)^2$\,m$^2$.  The bottom edge of the boundary constitutes $\Gamma$ (where according to \eqref{bc:Gamma}, the deformation is zero and we impose zero fluid flux), whereas $\Sigma$ is conformed by the top edge (where a sinusoidal-in-time distributed traction $\bt_\Sigma = 2000\sin(\pi t)\nn$ is applied and a constant fluid pressure $p_{\mathrm{in}} = 0.2$\,MPa is considered) as well as the vertical walls (where we prescribe zero traction and zero fluid pressure).   We employ a non-homogeneous initial Lagrangian porosity that is not symmetric with respect to the centre of mass of the domain, given by $\phi_{0} (\bx) = \frac{3}{5} + \frac{1}{10}\sin(x_1x_2)$, the isotropic power-law porosity-dependent permeability, a compressible {neo--Hookean} strain energy density yielding the nominal effective stress $\bS_{\mathrm{eff}} =  \mu_s(\bF^{\tt t} - \bF^{-1}) + \lambda_s\mathrm{ln}J\bF^{-1}$, and the parameters
\begin{gather*}
E = 10^4\, \text{Kg/ms}^2, \quad \nu \in [0.2,0.499999], \quad 
\mu_s = \frac{E}{2(1+\nu)}, \quad \lambda_s = \frac{E\nu}{(1+\nu)(1-2\nu)},\quad 
\bb = \cero, \quad \ell = 0 ,  \\
 {\rho_s = 2\cdot10^{-3}\, \text{Kg/m}^3, \quad \rho_f = 10^{-3}\, \text{Kg/m}^3}, \quad 
\alpha = 0.25, \quad \mu_f = 10^{-3}\,\text{m}^2/\text{s},\quad \kappa_0 = 10^{-5}\,\text{m}^2.
\end{gather*}
The scheme characterised by the finite element spaces in \eqref{fe-spaces} is used on a crossed-shaped uniform mesh with 10000 cells, and the coupled system is simulated until $t_{\mathrm{final}} = 1$\,s using a constant time step $\Delta t = 0.1$\,s. The system is considered initially at rest ($\bu(0) = \cero,\ p(0) = 0,\ \phi_f(0) = \phi_0$). This test also serves to assess the robustness of the method. We run a set of simulations varying the Poisson ratio from 0.2 to 0.499999. Regardless of the parameter regime, the numerical results do not show spurious oscillations of pressure, nor unexpectedly small displacements or nonphysical distortions or checker-board patterns in the porosity field. In Figure~\ref{fig:ex01a} we plot deformed geometries and field variables showing the progressive compression of the porous block over three time instants, and using $\nu = 0.33$ and $\nu =0.495$. As expected, the fluid pore pressure does not have a uniform distribution, and it moves toward $\Gamma$ on the bottom edge. 

\begin{figure}[t!]
\begin{center}
\includegraphics[width=0.244\textwidth]{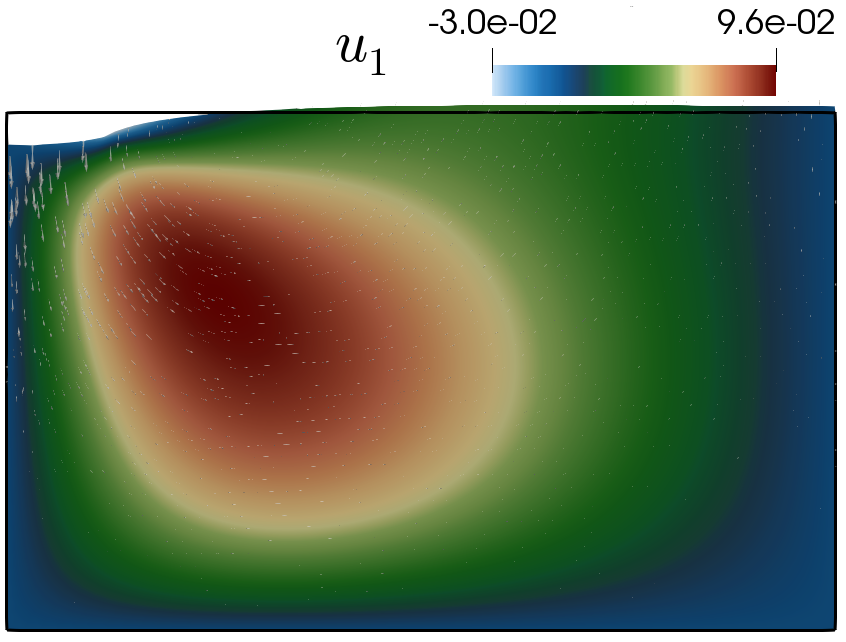}
\includegraphics[width=0.244\textwidth]{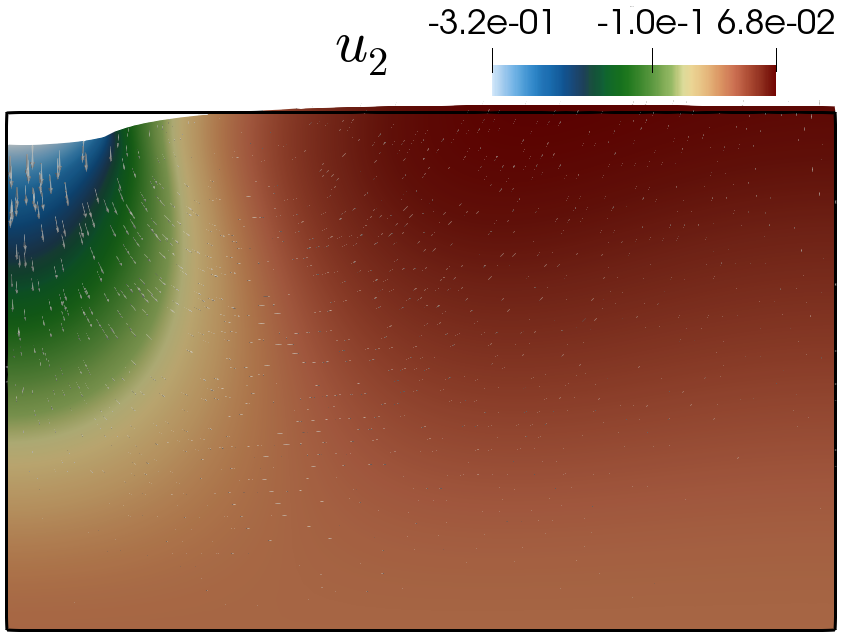}
\includegraphics[width=0.244\textwidth]{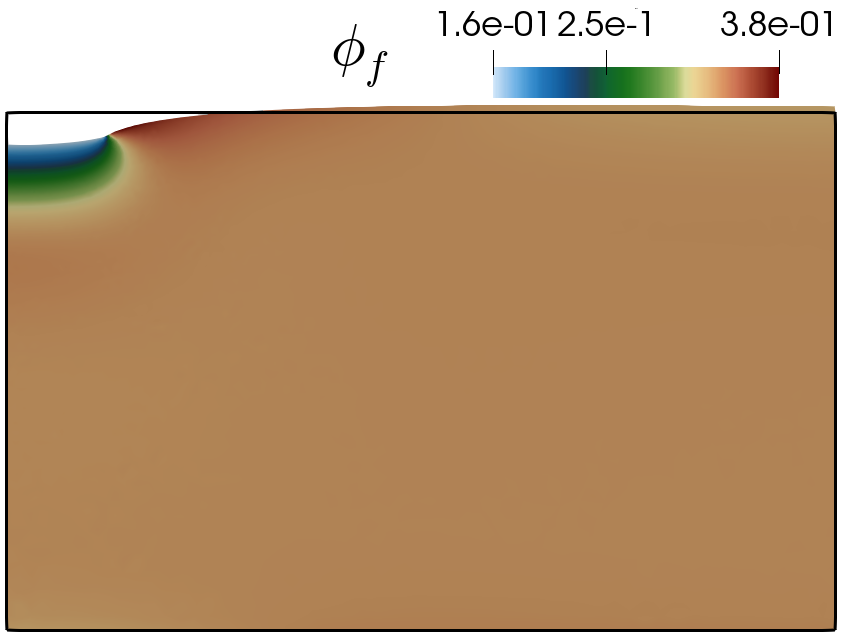}
\includegraphics[width=0.244\textwidth]{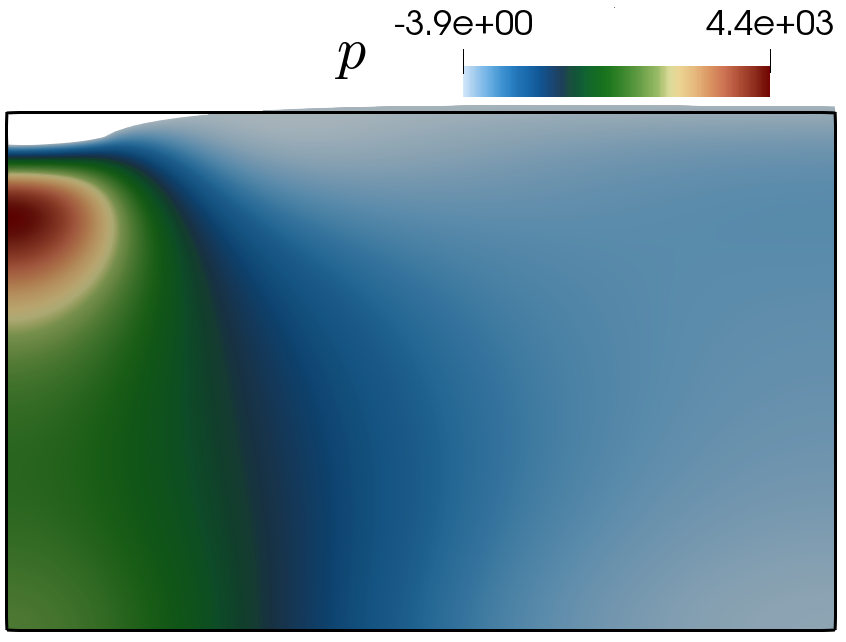}\\
\includegraphics[width=0.244\textwidth]{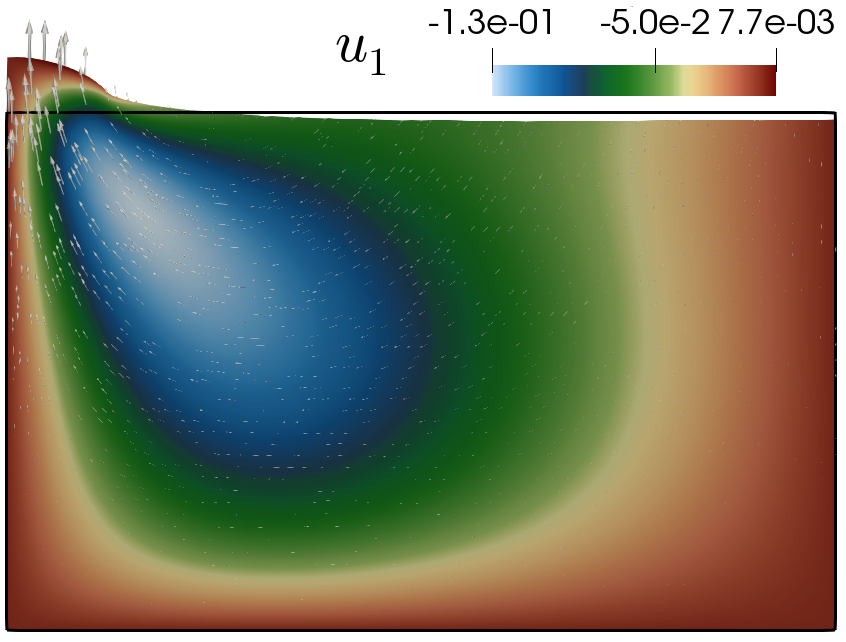}
\includegraphics[width=0.244\textwidth]{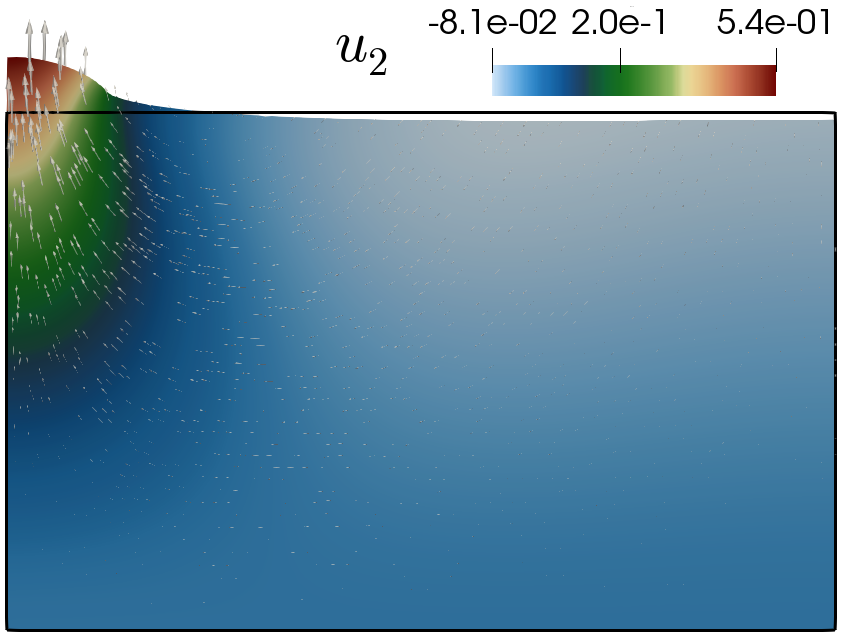}
\includegraphics[width=0.244\textwidth]{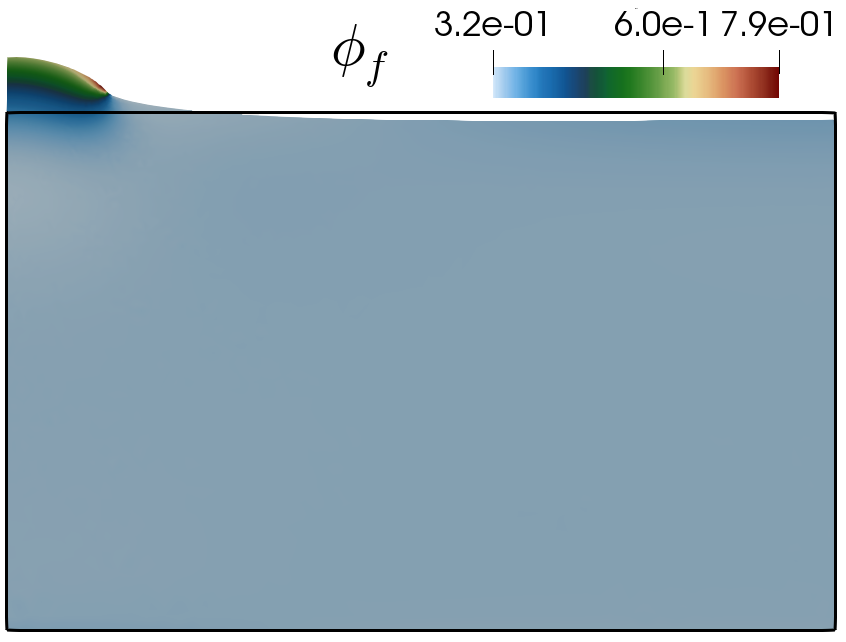}
\includegraphics[width=0.244\textwidth]{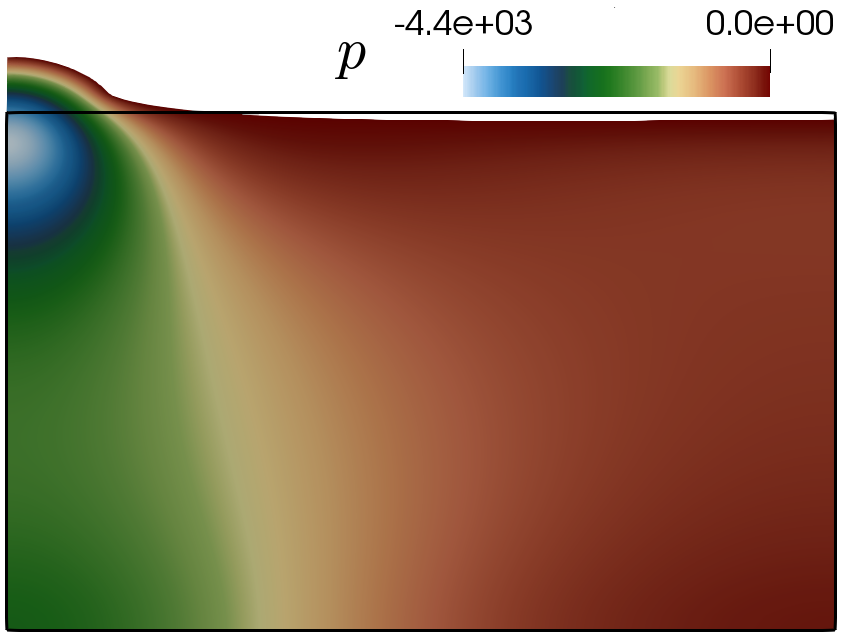}\\
\includegraphics[width=0.495\textwidth]{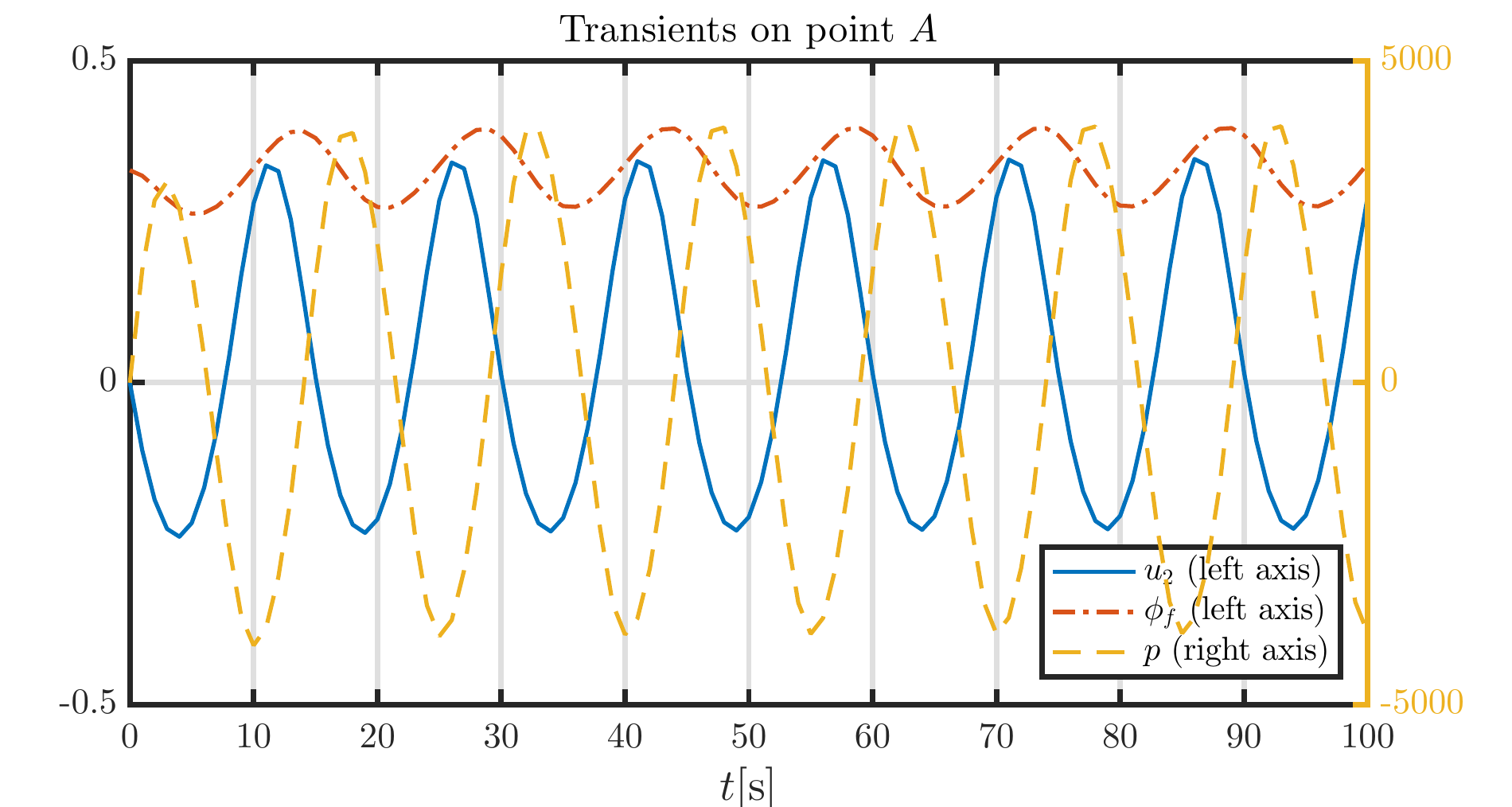}
\includegraphics[width=0.495\textwidth]{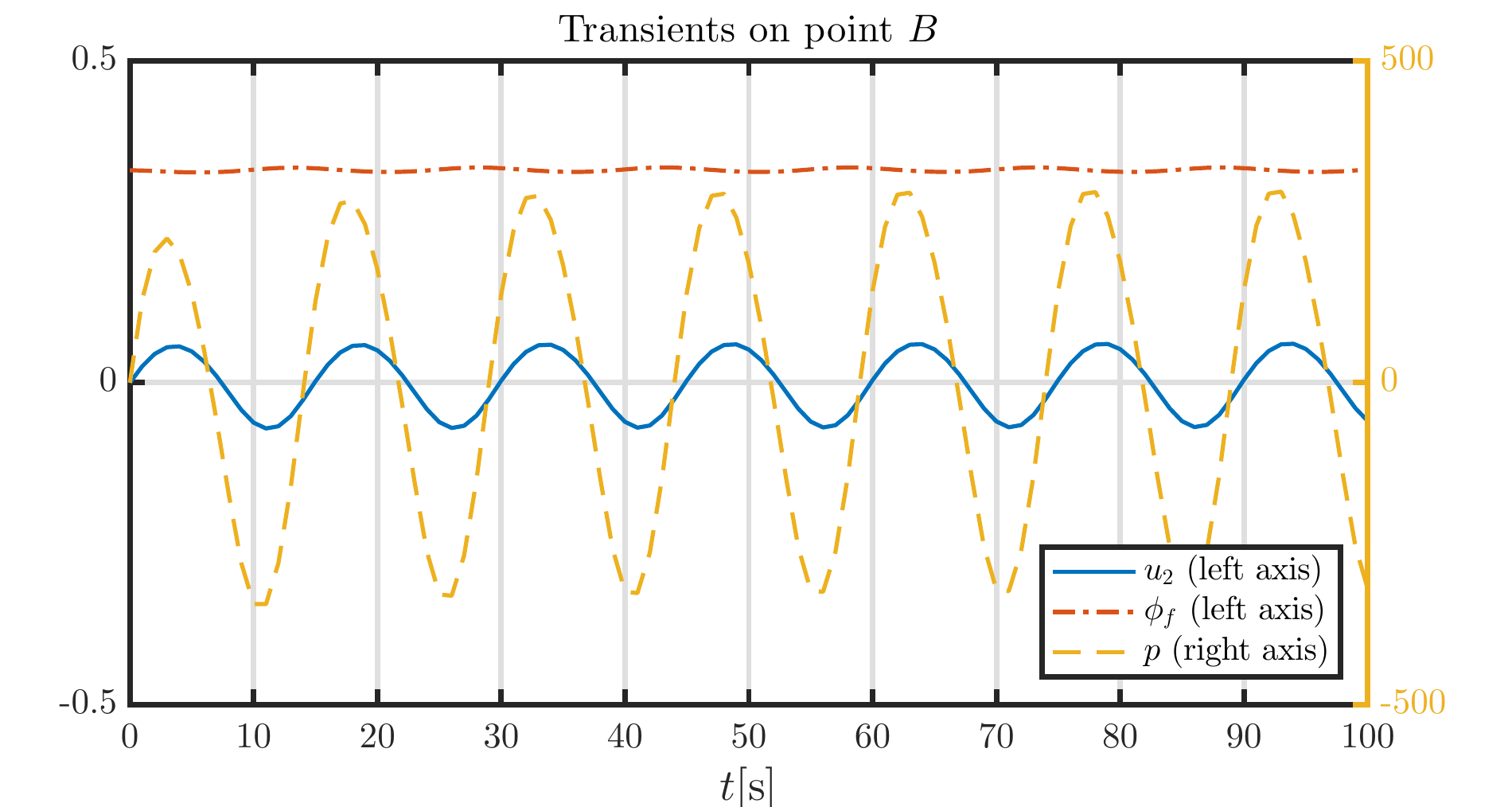}
\end{center}

\vspace{-1mm}
\caption{Example 2: Compression benchmark. Approximate displacement components, Lagrangian porosity, and fluid pressure, on the deformed configuration and computed at $t = 34$ (top) and $t = 41$ (middle), and transients of field variables at points $A$ and $B$ (bottom).}
\label{fig:ex01b}
\end{figure}

In addition, we perform a benchmark test where again a time-harmonic loading defined by $\bt_\Sigma = \frac{1}{5}(\lambda_s+2\mu_s)\sin(\frac{2\pi}{15} t)\nn$ is applied, now only on part of the top edge of the box $\Omega = (0,8)\times(0,5)$\,m$^2$ (on the segment $0\leq x_1\leq 1\,\text{m},\ x_2 = 5$\,m). The setup of this validation example proceeds similarly as in  \cite[Sect. 4.2]{korsawe06} (see also \cite[Sect. 4.2]{zheng20} and \cite{li04,rohan17}). The boundary conditions in these references permit drainage (fixing a zero fluid pressure) on the whole top lid, setting zero fluid flux elsewhere on the boundary, the bottom edge is clamped (imposing zero displacement) while the vertical walls are on  rollers (only the horizontal displacement is set to zero), and the remainder of the top edge is traction-free. The parameter values that change with respect to the previous case are 
\begin{gather*}
E = 3\cdot10^4\, \text{N/m}^2, \ \nu = 0.2, \ 
\alpha = 1, \ \kappa_0 = 10^{-8}\,\text{m}^2, \ \phi_0 = 0.33, \ t_{\mathrm{final}} = 100\,\text{s}, \ 
\Delta t = 1\,\text{s},
\end{gather*}
and we now use the normalised and isotropic {Kozeny--Carman} porosity-dependent permeability. For this test the mesh is unstructured and graded near the top-left corner. Figure~\ref{fig:ex01b} shows snapshots of the approximated field variables at times $t= 4,12,16$\,s and the bottom panels also portray transients of the vertical displacement and porosity recorded at two locations (node A$\approx$(0.5,4.5), directly below the centre of the footing strip and node B$\approx$(8.0,4.5) approximately at the same height but on the right wall). These plots show the same qualitative behaviour encountered in, e.g., \cite{li04}.

\subsection{Example 3: Errors with respect to manufactured solutions}
Since there is no analytical solution currently available for the coupled system \eqref{eq:u}-\eqref{eq:cl}, the accuracy of the finite element discretisation is investigated using the following closed-form manufactured solutions  
\begin{gather}
c_p = t[0.3\exp(x_1)+0.1\cos(\pi x_1)\cos(\pi x_2)],\ 	
c_l = t[0.3\exp(x_1)+0.1\sin(\pi x_1)\sin(\pi x_2)],\nonumber\\
\bu = 0.25t\begin{pmatrix}
	\sin(\pi x_1)\cos(\pi x_2)+\frac{x_1^2}{\lambda_s}\\
	-\cos(\pi x_1)\sin(\pi x_2)+\frac{x_2^2}{\lambda_s}
	\end{pmatrix}, \quad  
	p = t\sin(\pi x_1x_2)\cos(\pi x_1x_2), \label{eq:manuf}\quad  \phi_0 =0.6+0.1\sin(x_1x_2), 
\end{gather}
and $\phi_f = J - 1 + \phi_0$, defined on the unit square domain $\Omega = (0,1)^2$. Together with the strong form of the governing equations, these exact solutions are used to obtain the load and source right-hand side terms. For simplicity we impose Dirichlet boundary conditions for the concentrations $c_p,c_l$ whereas mixed boundary conditions are considered for the displacement and fluid pressure. On the left side of the boundary $\Gamma$, we prescribe the exact displacement from \eqref{eq:manuf} and an exact traction $\bP\nn$ on the remainder of the boundary, $\Sigma$. The fluid pressure is constrained to match the exact one on $\Sigma$ and an exact fluid pressure flux is imposed on $\Gamma$.

\begin{figure}[t!]
\begin{center}
\includegraphics[width=0.5\textwidth]{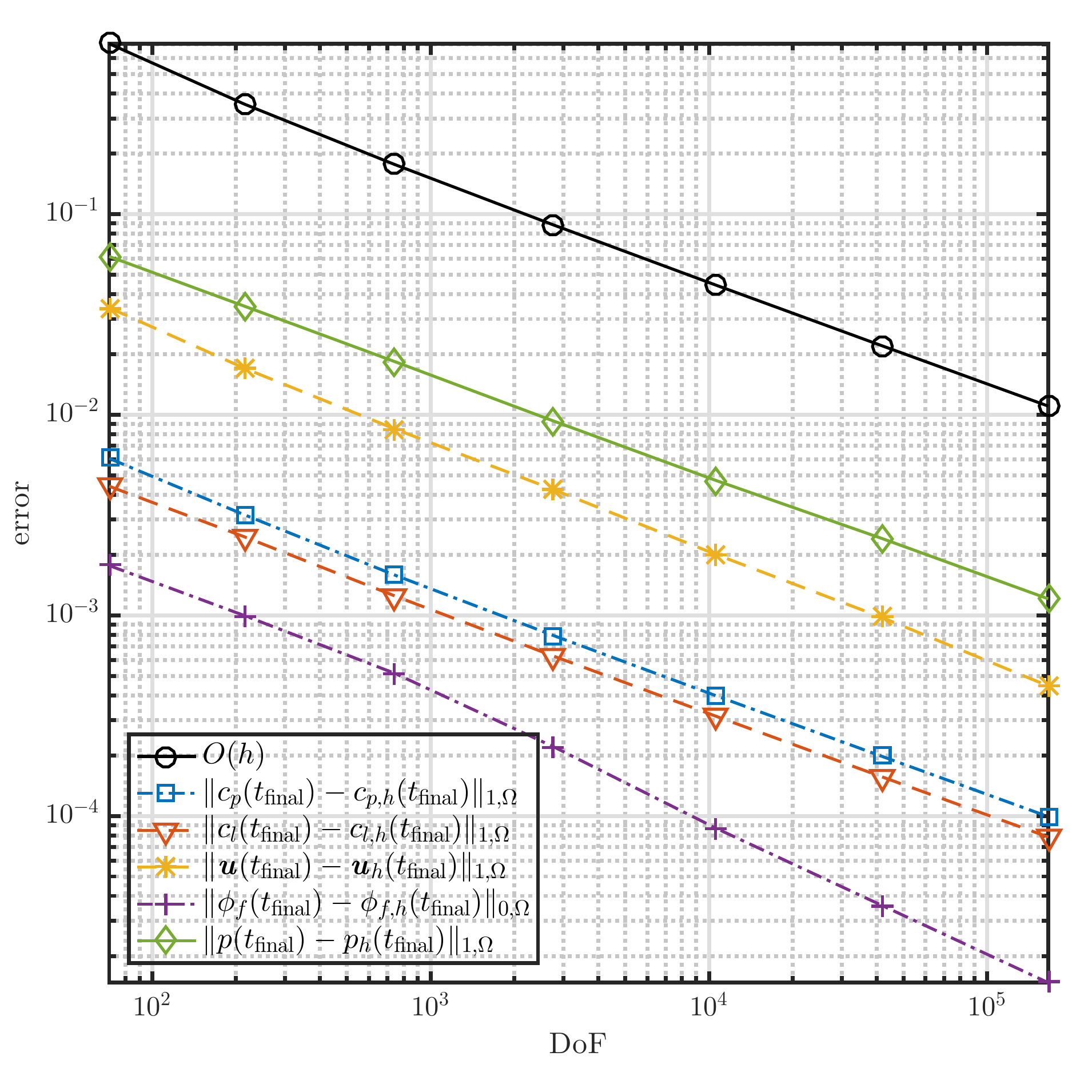}
\end{center}

\vspace{-1mm}
\caption{Example 3: Error decay with respect to the number of degrees of freedom, computed between approximate solutions computed with a first-order method and manufactured solutions \eqref{eq:manuf}, at the final adimensional time $t_{\mathrm{final}} = 0.03$.}\label{fig:ex02}
\end{figure}

We use the same neo--Hookean material law as in Example 1, and employ the isotropic Kozeny--Carman permeability $\bkappa^{\mathrm{KC}}_{\mathrm{iso}}$. The following parameter values are selected 
\begin{gather*}
\lambda_s = 36.4,\quad \mu_s = 22.1, \quad \rho_s = \rho_f = \mu_f =  \kappa_0 = L_{p0} = L_{bp} = L_{br} =v_{\max} = \gamma_p = \gamma_l = c_l^{\max} =1, \quad 
\alpha = 0.5, \\ \bD_p = 0.9\bI, \quad \bD_l =0.8\bI,  
  \quad \sigma_0 = \lambda_{lp} = \lambda_{pl} = \mu_l = l_0 = p_0 = \pi_i = k_m =\pi_c = \chi = p_c =  S/V = 1, \quad 
 n = 2,
\end{gather*}
they are all regarded adimensional and do not have physical relevance in this case, as we will be simply testing the convergence of the finite element solutions. 

We generate successively refined simplicial grids and use a sufficiently small (non dimensional) time step $\Delta t = 0.01$ and simulate a sufficiently short time horizon $t_{\mathrm{final}} = 3\Delta t$, to guarantee that the error produced by the time discretisation does not dominate. Errors between the approximate and exact solutions are plotted against the number of degrees of freedom in Figure~\ref{fig:ex02}. This error history plot confirms the optimal convergence of the finite element scheme (in this case, first order) given by the Remark \ref{remark}, for all variables in their respective norms, where a slightly better rate, of about 1.3, is seen for the porosity. In addition, the average iteration count for the Newton-Raphson method (for each refinement level and for all time steps) is five.

\subsection{Example 4: Localisation of oedema regions}
In the next test we present and discuss the coupling dynamics of the model~\eqref{eq:poroe}-\eqref{eq:immune}, considering a two-dimensional domain $\Omega = (0,4) \times (0,4)$\, {cm$^2$, and a time step size of $\Delta t = 0.1$ days}. The parameters described in the Table~\ref{tab:parameters} were used. Boundary conditions were applied as follows: $\bu = \textbf{0}$ on the left edge, and $p=0$ for the right, top and bottom edges of the domain, and the following initial conditions were used: $c_{p,0} = 0.001 \: \mbox{in the region} \: (x_1 - 2)^2 + (x_2 - 2)^2 \le 0.03$ and $c_{p,0} = 0.0$, otherwise. Also, we consider $c_{l,0} = 0.003$, $\phi_{0} = 0.2$ and $p_0 = 0$ in the entire domain. Figure~\ref{fig:2DPadrao} presents the results of a localised oedema formation. Each row presents snapshots of the variables $c_p$, $c_l$, $p$, $\bu$ and $\phi_f$, respectively, at three selected time instants.

The dynamics of the immune response can be observed in Figure~\ref{fig:2DPadrao} in terms of $c_p$ and $c_l$. After the pathogens appear in the tissue, they start to grow (Figure~\ref{fig:2DPadrao} at time $t = 8$ days) and reach their peak concentration (Figure~\ref{fig:2DPadrao} at time $t = 13$ days). However, in response to these events, leukocytes migrate to the infected site and their concentration also grows (Figure~\ref{fig:2DPadrao}, $c_l$ column). As a consequence of the presence of leukocytes, pathogen concentration starts to decrease, as Figure~\ref{fig:2DPadrao} shows for $c_l$ at time $t = 18$ days.

The presence of pathogens triggers a mechanical response due to a sequence of couplings in the model. The endothelium increases its permeability to allow leukocytes to leave the bloodstream and enter the tissue. This increase in the endothelium permeability in turn increases interstitial fluid and pressure (Figure~\ref{fig:2DPadrao} for $\phi_f$ and $p$ at time $t = 8$ days). When the concentration of pathogens reaches its peak (Figure~\ref{fig:2DPadrao} for $c_p$ at time $t = 13$ days), it is possible to observe that the amount of fluid phase $\phi_f$ in the tissue also reaches its maximum value. An increase of liquid in the region leads to an increase in both pressure and displacement fields (Figure~\ref{fig:2DPadrao} for $\bu$ and $p$ at time $t = 13$ days). Finally, when pathogens $c_p$ are almost eliminated by the leukocytes at time $t = 18$ days, the dynamics of pressure, fluid phase fraction and displacements, tend to return to their initial values.

\begin{figure}[!t] 
\begin{center}
\includegraphics[width=.95\textwidth]{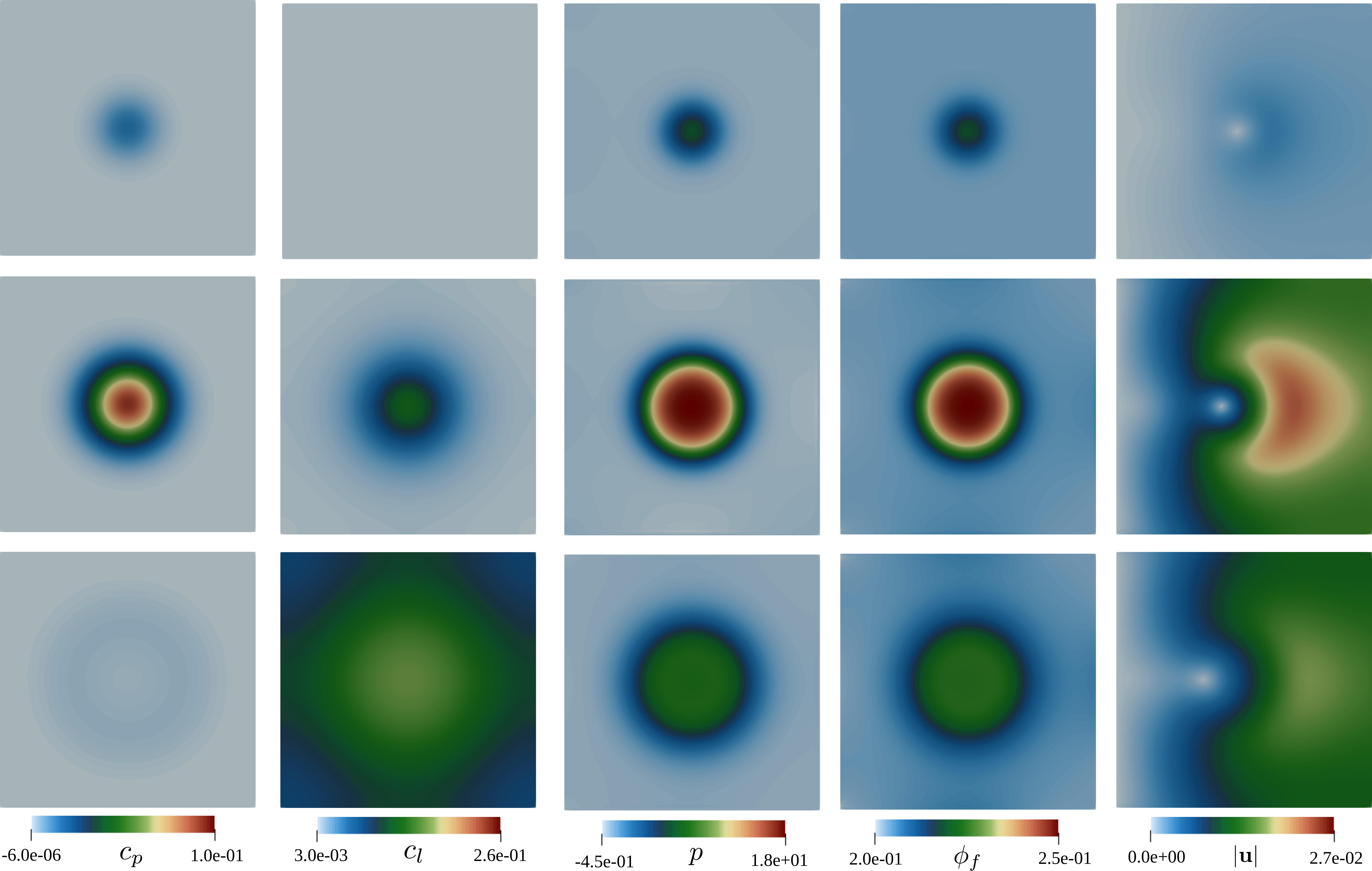}
\end{center}
\caption{Example 4. Behaviour of pathogen (first column) and leukocyte (second column) concentrations, pressure field (third column), fraction of fluid phase (fourth column) and displacement field (fifth column) at time $t = 8$ (top row), $t = 13$ (middle row) and $t = 18$ (bottom row), after the solution of the model \eqref{eq:u}-\eqref{eq:phi},\eqref{eq:cp}-\eqref{eq:cl}, considering the parameters given by Table~\ref{tab:parameters} and initial conditions given by $c_{p,0} = 0.001$, $c_{l,0} = 0.003$, $\phi_{0} = 0.2$, $\bu_0 = \cero$, and $p_0 = 0$.
}
\label{fig:2DPadrao}
\end{figure}

\subsection{Example 5: Immune system dynamics and poro-hyperelasticity in a left ventricle} \label{sec:ventr}
Next we conduct a series of tests using a ventricular geometry segmented from patient-specific images \cite{warriner18}, and where synthetic muscle fibre and collagen sheet directions are constructed using a Laplace-Dirichlet rule-based method \cite{rossi14}. On the basal surface we prescribe zero normal displacement and zero fluid pressure flux, the epicardium is considered stress-free and with a prescribed fluid pressure, and on the endocardium we impose a time-dependent traction $\bt_\Sigma = m_0\sin^2(\pi t)\nn$ with $m_0=0.1$\,N/cm$^2$, and an endocardial fluid pressure $p_\mathrm{endo} = p_0\sin^2(\pi t)$ having the same time period. No-flux conditions are used for the concentrations on the whole boundary.

\begin{figure}[t!]
\begin{center}
\includegraphics[width=0.244\textwidth]{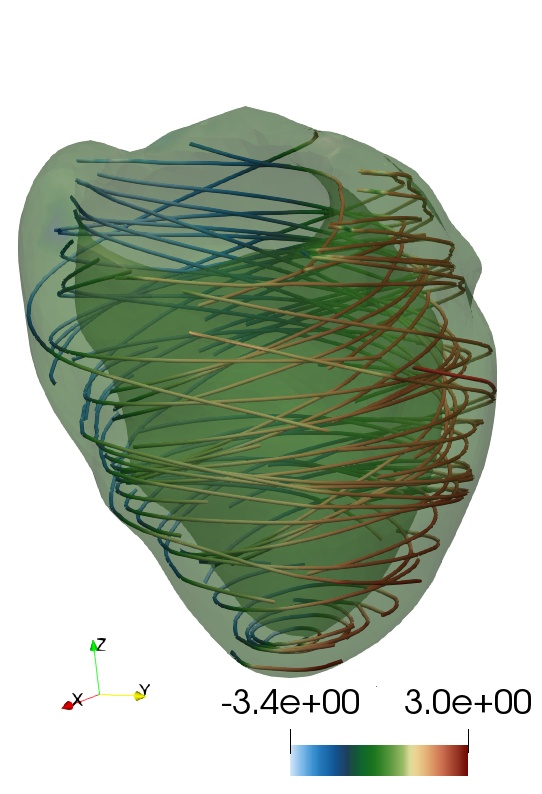}
\includegraphics[width=0.244\textwidth]{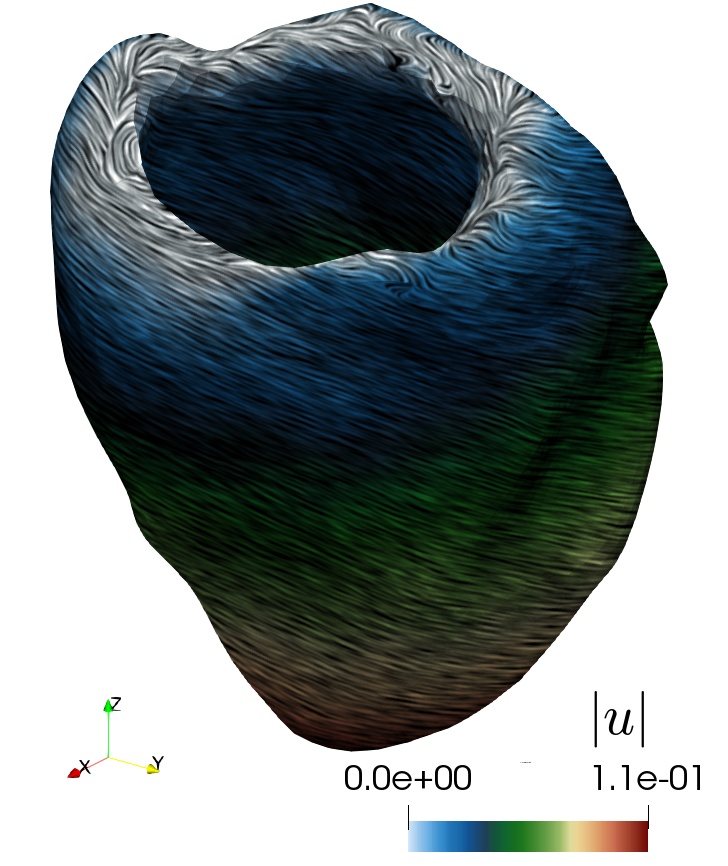}
\includegraphics[width=0.244\textwidth]{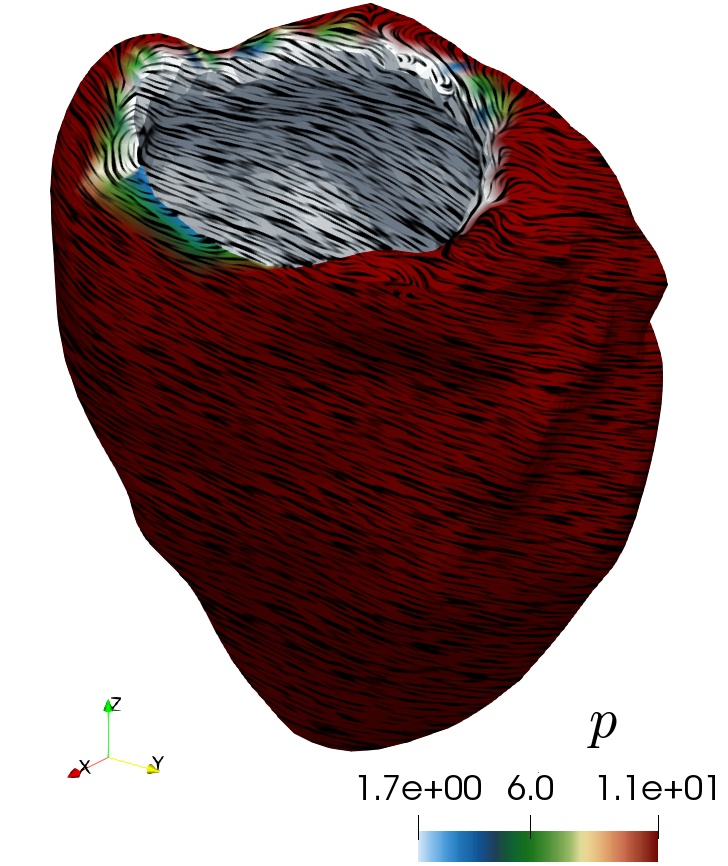}
\includegraphics[width=0.244\textwidth]{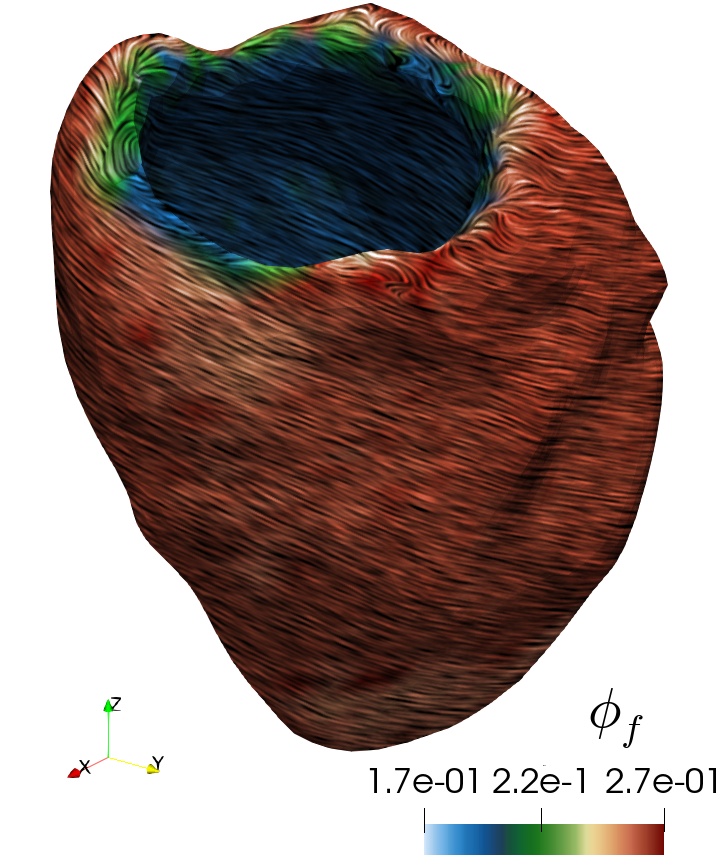}\\
\includegraphics[width=0.244\textwidth]{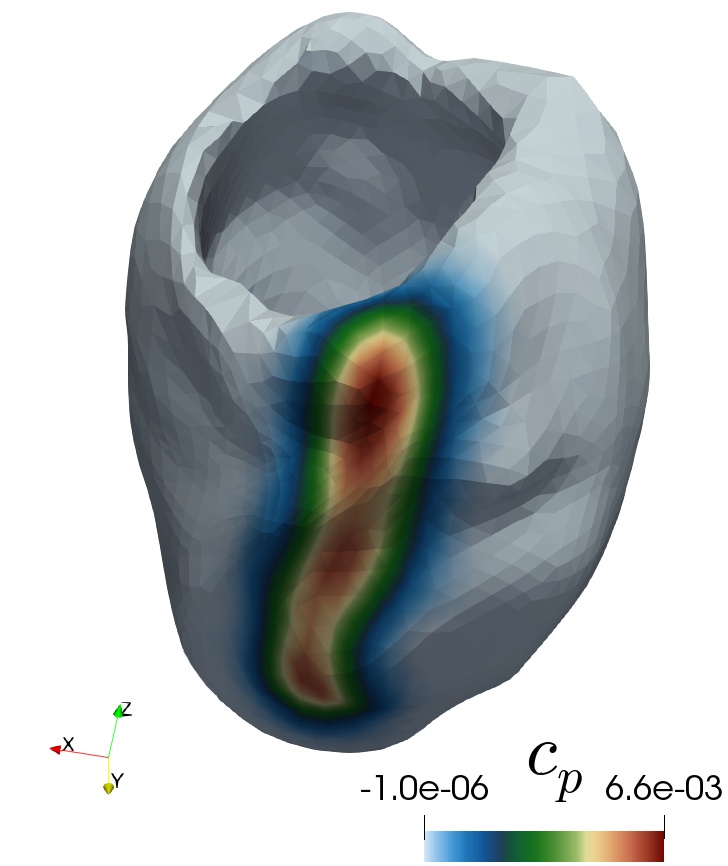}
\includegraphics[width=0.244\textwidth]{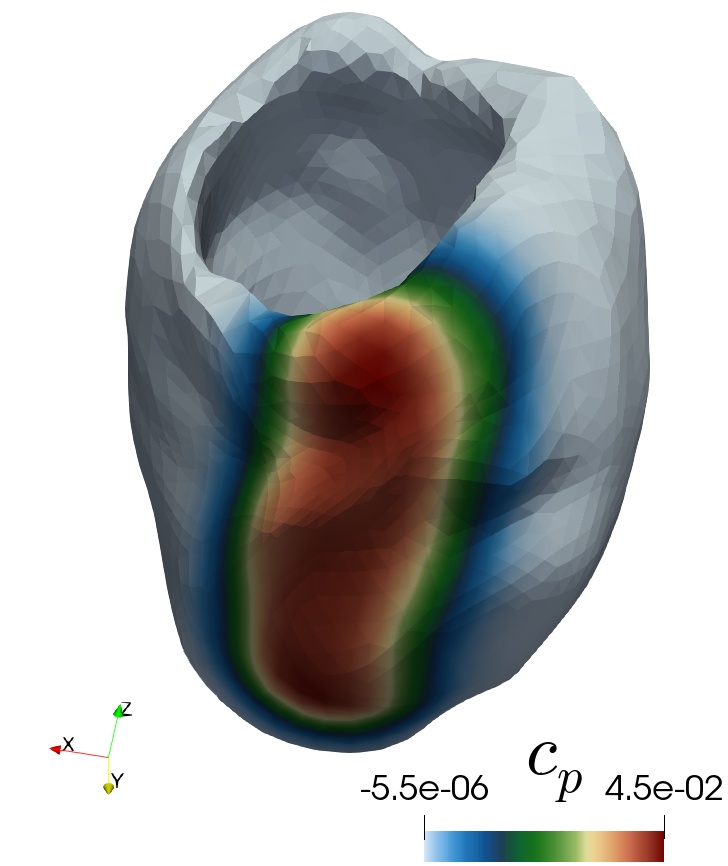}
\includegraphics[width=0.244\textwidth]{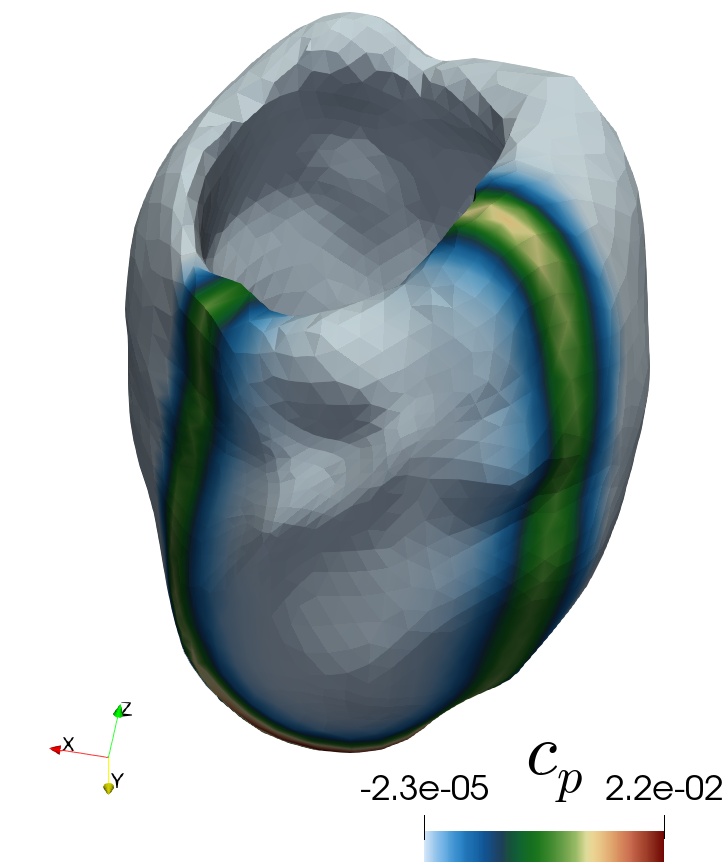}
\includegraphics[width=0.244\textwidth]{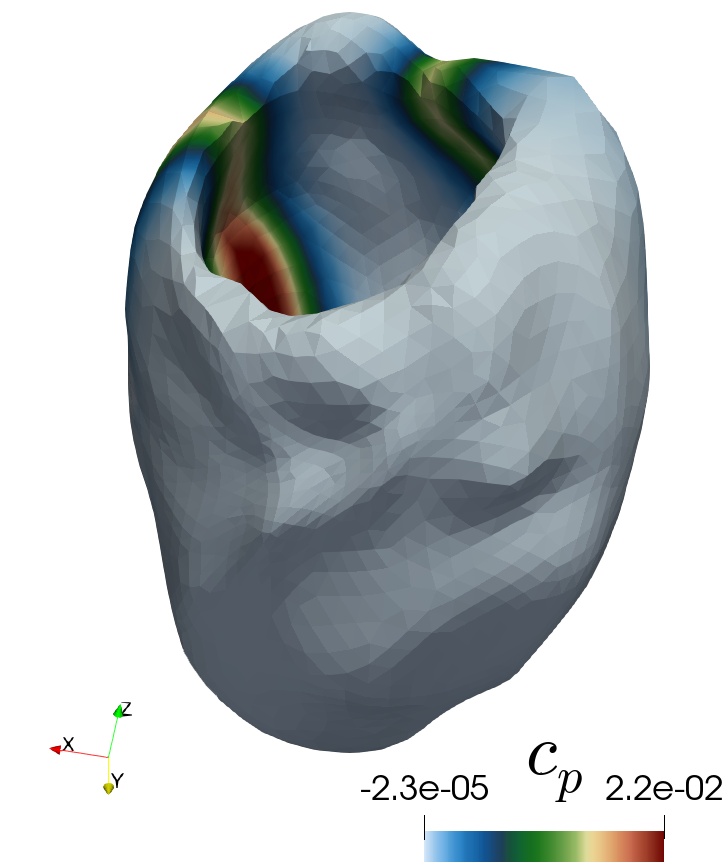}\\
\includegraphics[width=0.244\textwidth]{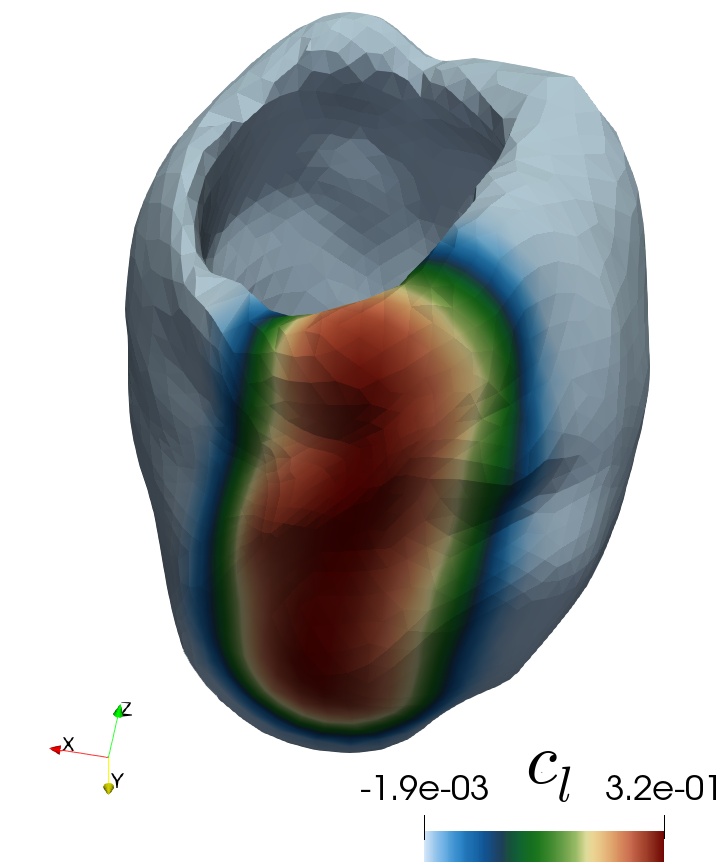}
\includegraphics[width=0.244\textwidth]{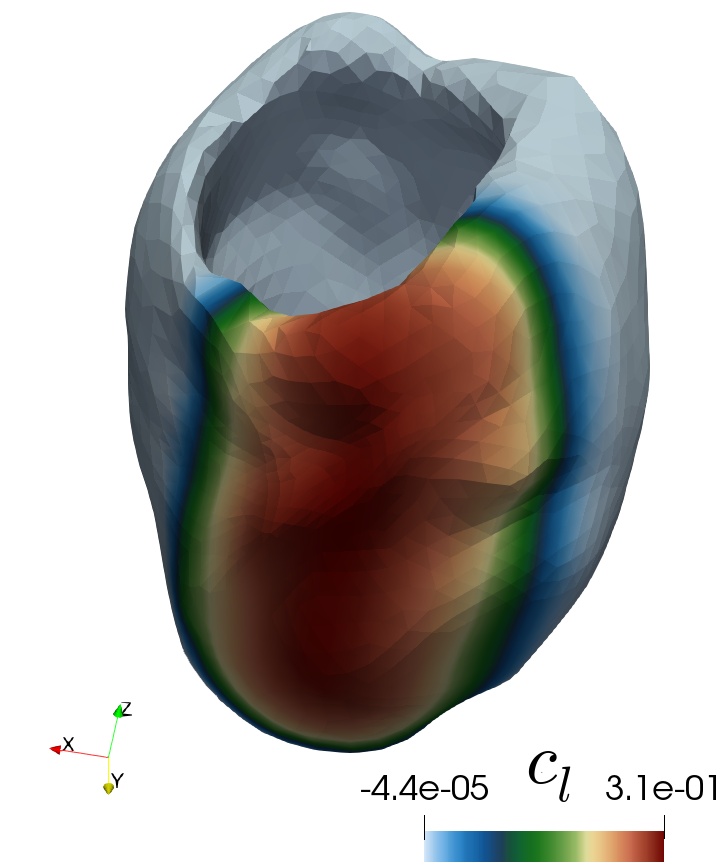}
\includegraphics[width=0.244\textwidth]{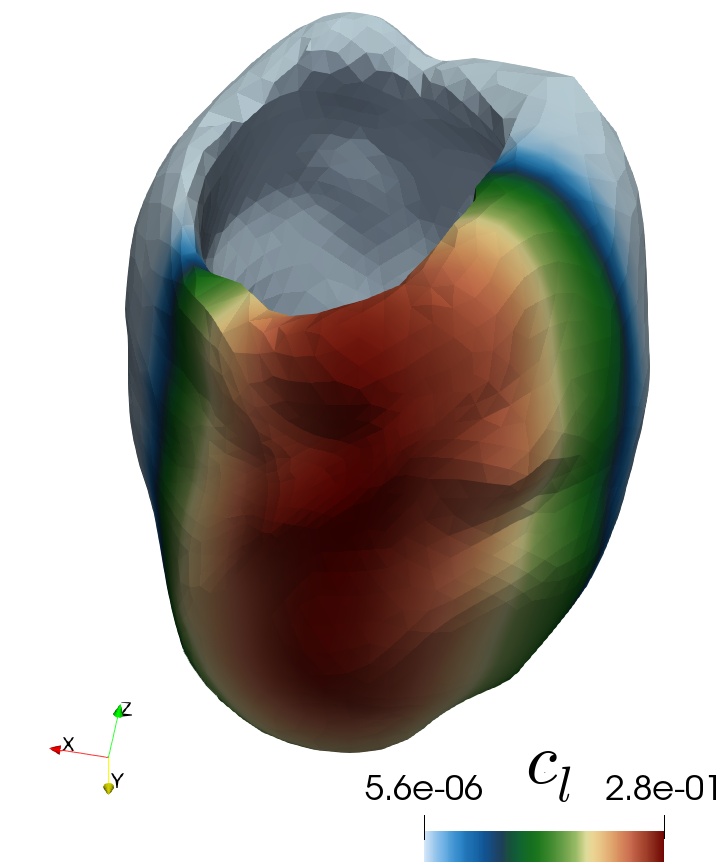}
\includegraphics[width=0.244\textwidth]{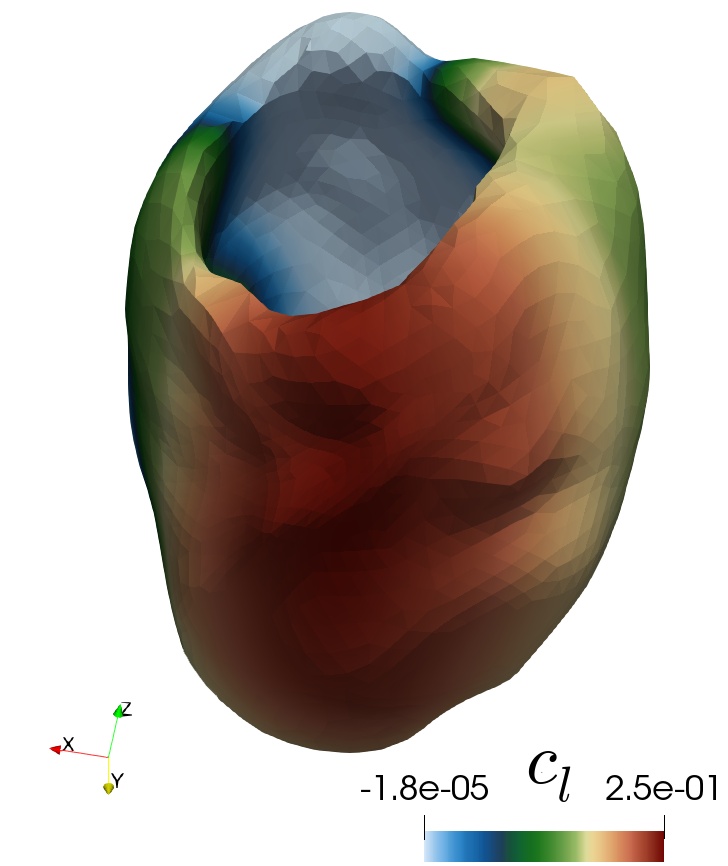}
\end{center}

\vspace{-1mm}
\caption{Example 5. Assigned fibre distribution streamlines, displacement magnitude, fluid pressure distribution, and porosity on a patient-specific left ventricular geometry at $t=14$ (top), and evolution of pathogens and leukocytes concentration (middle and bottom rows, respectively, seen from a slightly different angle).}
\label{fig:ex03a}
\end{figure}

For this example we use the Holzapfel-Ogden constitutive strain energy stated in \eqref{eq:HO}, and employ the anisotropic Kozeny--Carman permeability $\bkappa^{\mathrm{KC}}$ introduced in \eqref{eq:kappa}; while the remaining model parameters used for the 3D ventricular test assume the values 
\begin{gather*}
\lambda_s = 27.293\,\text{kPa},\quad 
\mu_s = 3.103\,\text{kPa}, \quad 
\bD_p = 3\times10^{-3}/\phi_0\,\text{cm$^2/$h}, \quad 
\pi_i = 10\,\text{mmHg},\quad n = 5, \quad \alpha = 0.5,\\ \bD_l = 5\times10^{-2}/\phi_0\,\text{cm$^2/$h}, \quad 
v_{\max} = 20, \quad
\sigma_0 = 0.91, \quad
\phi_0 = 0.2, \quad 
p_0 = 10.9\,\text{mmHg}, \quad  
k_m =6.5\,\text{mmHg}, \\ 
\pi_c = p_c = 20\,\text{mmHg}, \quad {S/V = 174}, \quad 
\chi = 10^{-4}\,\text{cm$^3$/(h.$10^{7}$cell)}, \quad  
\gamma_p = 0.13/\phi_0\,\text{cm$^3$/(h.$10^{7}$cell)}, \\ 
\lambda_{pl} = 7.1/\phi_0, \quad \lambda_{lp} = 1.8/\phi_0, \quad 
l_0 = 0\,\text{cm/s}, \quad  
L_{bp} = 5000, \quad L_{p0} = 3.6\times10^{-8}, \quad \mu_f = 10^{-3}\,\text{cm$^2$/s}, \\  
\kappa_0 = 2.5\times10^{-7}\,\text{cm$^2$},  \quad \rho_s = 2\times10^{-3}\,\text{Kg/cm$^3$}, \quad 
\rho_f = 10^{-3}\,\text{Kg/cm$^3$},\quad  
a = 0.496\,\text{N/cm$^2$}, \quad b = 0.041,\\ 
 a_f = 0.193\,\text{N/cm$^2$}, \quad b_f = 0.176, \quad  
a_s = 0.123\,\text{N/cm$^2$},\quad b_s = 0.209, \quad 
a_{fs}=0.162\,\text{N/cm$^2$}, \quad b_{fs} = 0.166.
\end{gather*}

An initial concentration of pathogens is considered on a transmural strip, and we simulate the dynamical behaviour of the coupled system over several minutes. The evolution of the pathogens and leukocytes  distribution on the deformed ventricular geometry is shown in Figure~\ref{fig:ex03a}, where we also show fibre distribution and mechanical fields at half-time. {One can observe that the presence of pathogens induces the local entry of leukocytes. Fluid also enters the interstitial space of the tissue, increasing pressure. Although leukocytes can remove part of the pathogens, some of them diffuse through almost the entire domain. As this pathogens' wave sweeps a significant part of the left ventricle, more parts of the heart are impacted by the inflammatory response and, as a consequence, diffuse oedema is formed. Numerical simulations show that the propagation of the front of pathogens likely depends on the diffusion and replication of pathogens and the wave tail likely depends on the diffusion and efficiency of the leukocytes\cite{lourenco22}. The numerical results indicate that, although fluid phase and pressure increase due to the presence of the pathogen, they result in small changes in displacement. In fact, new diagnostic tools have been studied to increase the non-invasive diagnostic accuracy of myocarditis and other myocardial pathologies using for this purpose the detection of increased water contents at some regions instead of an increase in tissue displacement. See, for example, \cite{kim2017,spieker2017t2,spieker2017abnormal}.} 





To conclude this section, for the ventricular geometry we proceed to briefly test the preconditioner described in Section \ref{sec:preconditioner}. To measure its performance, we look at the total and average number of Krylov iterations in two different meshes corresponding to the left ventricle geometry and with respect to $\mathbb P_1$ and $\mathbb P_1^b$ (bubble enriched element) conforming finite elements for the displacement to study the impact of the inf-sup stability in the performance. The results are shown in Table \ref{tab:preconditioner}, and as we have observed that numerically the most difficult time instant is the first one, we report the performance only on it.  

We highlight that the number of average Krylov iterations is roughly constant with an increasing number of cores. This can be expected due to the use of inexact solvers in the sub-blocks of the Schur complement preconditioner, which yield an adequate approximation of the exact Schur preconditioner, which has at most 3 distinct eigenvalues. It is particularly interesting to observe that this performance is obtained by using only the action of an AMG preconditioner for the chemotaxis block, meaning that (i) the coupling between the chemotaxis and poroelastic models is not very strong in passive cardiac simulations and that (ii) such block is not necessarily computationally challenging.  

\begin{table}[t!]
    \centering
    \begin{subtable}[h]{\textwidth}
        \centering
        \begin{tabular}{r | l l l l l l}
        \toprule DoFs & 1 cpu & 2 cpu & 4 cpu & 8 cpu & 16 cpu& 32 cpu \\
        \midrule
        28924 & 48 (6.86) & 88 (12.57) & 81 (11.57) & 87 (12.43) & 80 (11.43) & 90 (12.86)\\
        155351 & 74 (6.73) & 139 (12.64) & 122 (12.20) & 103 (12.88) & 89 (12.71) & 93 (13.29) \\ \bottomrule
       \end{tabular}
       \caption{Displacement approximated with $\mathbb P_1$ elements.}
       \label{tab:preconditioner-p1}
    \end{subtable}
    
    \begin{subtable}[h]{\textwidth}
        \centering
        \begin{tabular}{r | l l l l l l}
        \toprule DoFs & 1 cpu& 2 cpu& 4 cpu& 8 cpu& 16 cpu& 32 cpu \\
        \midrule
        72772 & 74 (6.73) & 139 (12.64) & 122 (12.20) & 103 (12.88) & 89 (12.71) & 93 (13.29)\\
        438158 & 50 (6.25) & 88 (11.00) & 83 (11.86) & 85 (10.62) & 77 (11.00) & 80 (11.43)\\ \bottomrule
       \end{tabular}
       \caption{Displacement approximated with $\mathbb P_1^b$ elements.}
       \label{tab:preconditioner-p2}
    \end{subtable}
    \caption{Example 5. Iteration counts, total and (average), for fGMRES with the proposed preconditioner, and for either continuous and piecewise linear approximations of displacement, or piecewise linear displacements with bubble enrichment. Note that an increase in the total iterations, for constant average iterations, occurs due to a different number of Newton iterations.}
    \label{tab:preconditioner}
\end{table}

\section{Final remarks}\label{sec:concl}
We have studied a general model capturing the phenomenological features of the interaction between chemotaxis of the immune system in saturated poroelastic media admitting large deformations. The essential properties of the new model include a formulation in terms of displacement, fluid pressure, and Lagrangian porosity, coupled with concentrations for leukocytes and pathogens. In particular, the poro-hyperelastic compartment of the model can be identified, after linearisation and adequately choosing the initial guess, with the three-field formulation for Biot's poroelasticity from \cite{oyarzua16,verma21}. We have proposed {a finite}  element method together with splitting strategies, and have studied their properties in terms of accuracy and iteration count. The realisation of the coupling is general enough to accommodate different types of model specifications, including diverse hyperelastic solid laws and showing the interplay of solid deformation, effective stress, and pore fluid pressure build-up.  The model also includes species transport through the total velocity, and the system is solved by means of a monolithic finite element method in saddle-point form. 

Some extensions in the development of this work include the study of long-term behaviour of the oedema formation, as well as performing a thorough exploration of different effects from introducing lymphatic sinks for $c_p$ and $c_l$. We also plan to incorporate active stress due to calcium release (in turn generated by the immune reaction-diffusion) and assess quantitative differences in the classical diffusion case. In a more applicative context, a more comprehensive sensitivity analysis is still required to determine the most relevant coupling mechanisms. Also, the validation and verification of the model against patient-specific data are still to be conducted. {We have expanded the application discussed in  Section~\ref{sec:ventr} to address further modelling and numerically oriented investigations in the recent work \cite{lourenco22}}. 
Regarding numerical schemes, a possible next step is to use mixed formulations for the immune system equations (following, e.g., \cite{Gatica2021}) to produce mass conservative discretisations. 

Further investigation is necessary, for instance, regarding the specific form of the anisotropic porosity as well as in designing new coupling mechanisms  that will contribute  to a better understanding of the formation and termination of myocarditis and myocardial oedema. 

\bigskip 
\noindent\textbf{Acknowledgements.} This work has been supported by Universidade Federal de Juiz de Fora (UFJF) through the scholarship ``Coordenação de Aperfeiçoamento de Pessoal de Nível Superior'' (CAPES) - Brazil - Finance Code 001; by Conselho Nacional de Desenvolvimento Científico e Tecnológico (CNPq) - Brazil {grant numbers 423278/2021-5, 308745/2021-3 and 310722/2021-7}; by Fundação de Amparo à Pesquisa do Estado de Minas Gerais (FAPEMIG) - Brazil CEX APQ 02830/17, TEC APQ 03213/17, {and TEC APQ 01340/18}; by the Monash Mathematics Research Fund S05802-3951284; {by the Australian Research Council through the Discovery Project grant DP220103160}; and by the Ministry of Science and Higher Education of the Russian Federation within the framework of state support for the creation and development of World-Class Research Centres ``Digital biodesign and personalised healthcare'' No. 075-15-2020-926.

\bigskip
\noindent\textbf{Declarations.} 
{The authors declare that the research was conducted in the absence of any commercial or financial 
relationships that could be construed as a potential conflict of interest.}

\bigskip
\noindent\textbf{Data availability.} 
{Part of the datasets and finite element implementations generated during and/or analysed during the current study are available from the \texttt{GitHub} repository} \url{https://github.com/ruizbaier/PoroelasticModelForAcuteMyocarditis}.

\bibliographystyle{siam}
\bibliography{Bibl.bib}
\end{document}